\newtheorem{Le}{Lemma}[section]
\newtheorem{Def}[Le]{Definition}
\newtheorem{St}[Le]{Proposition}
\newtheorem{Th}{Theorem}[section]
\newtheorem{Cor}[Le]{Corollary}
\newtheorem{Rem}[Le]{Remark}
\newtheorem{Ex}[Le]{Example}
\numberwithin{equation}{section}
\newcommand{\R}{\mathbb{R}}
\newcommand{\Co}{\mathbb{C}}
\newcommand{\N}{\mathbb{N}}
\newcommand{\Z}{\mathbb{Z}}
\newcommand{\eps}{\varepsilon}
\newcommand{\eq}[1]{\begin{equation}{#1}\end{equation}}
\newcommand{\mlt}[1]{\begin{multline}{#1}\end{multline}}
\newcommand{\alg}[1]{\begin{align}{#1}\end{align}}
\newcommand{\set}[2]{\{{#1}\mid{#2}\}}
\newcommand{\Set}[2]{\Big\{{#1}\,\Big|\;{#2}\Big\}}
\newcommand{\scalprod}[2]{\langle{#1},{#2}\rangle}
\newcommand{\fdot}{\,\cdot\,}
\newcommand{\Eeqref}[1]{\stackrel{\scriptscriptstyle{\eqref{#1}}}{=}}
\newcommand{\Leqref}[1]{\stackrel{\scriptscriptstyle{\eqref{#1}}}{\leq}}
\newcommand{\Lseqref}[1]{\stackrel{\scriptscriptstyle{\eqref{#1}}}{\lesssim}}
\newcommand{\LeqrefTwo}[2]{\stackrel{\scriptscriptstyle{\eqref{#1},\eqref{#2}}}{\leq}}
\newcommand{\Lref}[1]{\stackrel{#1}{\leq}}
\newcommand{\Lsref}[1]{\stackrel{#1}{\lesssim}}
\DeclareMathOperator{\BV}{BV}
\DeclareMathOperator{\I}{I}
\DeclareMathOperator{\spec}{spec}
\DeclareMathOperator{\Heat}{H}
\DeclareMathOperator{\loc}{loc}
\DeclareMathOperator{\s}{s}
\DeclareMathOperator{\Lip}{Lip}
\newcommand{\Disp}{\mathbb{D}}
\newcommand{\E}{\mathbb{E}}
\newcommand{\T}{\mathcal{T}}
\DeclareMathOperator{\Hardy}{H}
\newcommand{\mass}{{\bf \mathrm{m}}}
\DeclareMathOperator{\Conv}{Co}
\DeclareMathOperator{\Fl}{Fl}
\newcommand{\Me}{\boldsymbol{\mathrm{M}}}
\newcommand{\MM}{\mathbb{M}}
\newcommand{\Sw}{\mathcal{S}}
\newcommand{\W}{\boldsymbol{\mathrm{W}}}
\newcommand{\WW}{\mathcal{W}}
\newcommand{\LIP}{\mathfrak{L}}
\DeclareMathOperator{\M}{M}
\title{Hardy--Littlewood--Sobolev inequality for $p=1$}
\author{Dmitriy Stolyarov\thanks{Supported by RFBR grant no. 18-31-00037.}}
\begin{document}
\maketitle
\begin{abstract}
Let~$\WW$ be a closed dilation and translation invariant subspace of the space of~$\R^\ell$-valued Schwartz distributions in~$d$ variables. We show that if the space~$\WW$ does not contain distributions of the type~$a\otimes \delta_0$,~$\delta_0$ being the Dirac delta, then the inequality~$\|\I_\alpha [f]\|_{L_{p,1}}\lesssim \|f\|_{L_1}$,~$\frac{p-1}{p} = \frac{\alpha}{d}$, holds true for functions~$f\in\WW\cap L_1$ with a uniform constant; here~$\I_\alpha$ is the Riesz potential of order~$\alpha$ and~$L_{p,1}$ is the Lorentz space. This result implies as a particular case the inequality~$\|\nabla^{m-1} f\|_{L_{\frac{d}{d-1},1}} \lesssim \|A f\|_{L_1}$, where~$A$ is a canceling elliptic differential operator of order~$m$. 
\end{abstract}

\section{Generalized Sobolev and~$\BV$ spaces}\label{S1}
Let~$l$ and~$d$ be natural numbers. We will be working with functions that map~$\R^d$ to~$\Co^l$. We equip the latter space with the standard Euclidean norm on~$\R^{2l}$:
\eq{|a|^2 = \sum\limits_{j=1}^l a_j\bar{a}_j,\qquad a\in \Co^l.
}
Let~$p \in [1,\infty)$. Consider the space~$L_p(\R^d,\Co^l)$ of measurable functions~$f\colon \R^d\to \Co^l$ such that the quantity 
\eq{\label{LpNormDef}
\|f\|_{L_p(\R^d,\Co^l)} = \Big(\int\limits_{\R^d} |f(x)|^p\,dx\Big)^{\frac1p}
}
is finite. 
By the usual limit argument, one may extend this definition to the case~$p=\infty$. We may further introduce the space~$\Me(\R^d,\Co^l)$ that consists of all charges ($\Co^l$-valued sigma-additive Borel set functions) of finite variation. Here and in what follows we distinguish measures that are always scalar and non-negative from charges, which may be either~$\R$,  or~$\Co$, or~$\R^\ell$, or~$\Co^l$ valued. We define the norm in the~$\Me$ space by the formula
\eq{\label{TotalVariation}
\|\mu\|_{\Me(\R^d,\Co^l)} = \sup \Set{\int\limits_{\R^d} f\,d\mu}{\|f\|_{C_0(\R^d,\Co^l)} \leq 1}.
}
Here~$C_0(\R^d,\Co^l)$ is the space of continuous functions that tend to zero at infinity, equipped with the standard sup-norm. Note that the norm~\eqref{TotalVariation} coincides with the total variation of~$\mu$ defined in the usual way.

Let~$k \leq l$ be a natural number. Let~$\Omega\colon S^{d-1}\to G(l,k)$ be a smooth mapping. The notation~$S^{d-1}$ and~$G(l,k)$ means the unit sphere in~$\R^d$ and the (complex) Grassmannian, i.e. the set of all (complex) linear~$k$-dimensional subspaces of~$\Co^l$. The map~$\Omega$ gives rise to a generalization of the Sobolev space
\eq{
W_1^\Omega = \Set{f\in L_1(\R^d,\Co^l)}{\forall \xi \in \R^d \setminus \{0\} \quad \hat{f}(\xi) \in \Omega\big(\frac{\xi}{|\xi|}\big)}
}
and the~$\BV$-space
\eq{
\BV^\Omega = \Set{\mu\in \Me(\R^d,\Co^l)}{\forall \xi \in \R^d \setminus \{0\} \quad \hat{\mu}(\xi) \in \Omega\big(\frac{\xi}{|\xi|}\big)}.
}
These spaces inherit the norms from the spaces~$L_1$ and~$\Me$ correspondingly. Here and in what follows we use the standard Harmonic Analysis normalization of the Fourier transform
\eq{
\hat{f}(\xi) =\mathcal{F}[f](\xi) = \int\limits_{\R^d} f(x)e^{-2\pi i\scalprod{\xi}{x}}\,dx;\quad \hat{\mu}(\xi) = \int\limits_{\R^d}e^{-2\pi i \scalprod{\xi}{x}}d\mu(x).
} 
Since we are working with the Fourier transform, we will need the Schwartz class. We denote it by~$\Sw(\mathbb{R}^d)$ or~$\Sw(\R^d,\Co^l)$ depending on whether we consider scalar or vector valued functions.

\begin{Rem}
The spaces~$W_1^\Omega$ and~$\BV^\Omega$ are closed in~$L_1(\R^d,\Co^l)$ and~$\Me(\R^d,\Co^l)$ respectively. These spaces are also translation and dilation invariant. 
\end{Rem}
\begin{Ex}\label{GagliardoNirenbergExample}
Let~$l=d$ and~$k=1$. Consider the mapping
\eq{
\Omega(\zeta) = \Co\zeta,\quad \zeta \in S^{d-1},
}
i.e. the vector~$\zeta$ is mapped to the complex line spanned by~$\zeta$. In this case\textup,
\eq{
W_1^{\Omega} = \{\nabla f\mid f\in \dot{W}_{1}^1(\R^d)\};\quad \BV^\Omega = \{\nabla f\mid f\in \BV(\R^d)\}.
}
In other words\textup, the classical spaces~$\dot{W}_1^1$ and~$\BV$ may be obtained by choosing specific~$\Omega$.
\end{Ex}
\begin{Ex}\label{DivFree}
Let~$l=d$ and~$k=d-1$. Define the mapping~$\Omega$ by the formula
\eq{
\Omega(\zeta) = \Set{\eta \in \R^d}{\sum\limits_{j=1}^d \zeta_j\eta_j = 0},
}
i.e.~$\Omega(\zeta)$ is the orthogonal complement of the line spanned by~$\zeta$. In this case\textup,~$\BV^\Omega$ is the space of divergence free \textup(solenoidal\textup) charges.
\end{Ex}
\begin{Ex}\label{JeanExample}
One may go further and consider a vectorial homogeneous of order~$m$ elliptic differential operator~$A$ that maps~$V$-valued functions to~$E$-valued functions\textup, here~$V$ and~$E$ are finite dimensional spaces. Let~$\mathcal{L}(V,E)$ be the space of all linear operators with domain~$V$ and image in~$E$. One may think of~$A$ in terms of its symbol~$\mathbb{A}$ that is a mapping~$\mathbb{A}\colon \mathbb{R}^d \to \mathcal{L}(V,E)$ such that
\eq{
A[f] = \mathcal{F}^{-1}\Big[\mathbb{A} (\xi)[\hat{f}(\xi)]\Big],\quad f\in \Sw(\mathbb{R}^d,V).
}
Since we assume~$A$ is homogeneous\textup, the mapping~$\mathbb{A}$ is a homogeneous \textup(matrix-valued\textup) polynomial of order~$m$. The associated function~$\Omega$ is defined by the formula
\eq{\label{OmegaAndDiffOperators}
\Omega(\zeta) = \mathrm{Im}\,\mathbb{A}(\zeta),\quad \zeta \in S^{d-1}. 
}
Since~$A$ is elliptic\textup, the image of~$V$ has dimension~$\dim V$ for any~$\zeta \in S^{d-1}$\textup, and we indeed get a smooth mapping into~$G(\dim E,\dim V)$. The corresponding spaces~$W_1^{\Omega}$ and~$\BV^\Omega$ are usually denoted by~$W_1^{A}$ and~$\BV^{A}$.  

In the case~$A = \nabla$ considered in Example~\textup{\ref{GagliardoNirenbergExample}}\textup,~$V = \mathbb{C}$\textup,~$E = \mathbb{C}^d$\textup, and
\eq{
\mathbb{A}(\zeta)[\lambda] = 2\pi i \zeta \lambda,\quad \zeta \in \mathbb{R}^d,\ \lambda \in V = \mathbb{C}.
}
The case considered in Example~\textup{\ref{DivFree}} corresponds to the differential operator~$A = \mathrm{curl}$\textup, here~$V = E =\mathbb{C}^d$\textup; note that this operator is not elliptic\textup, it is a constant rank operator only.
\end{Ex}

Let~$\alpha \in (0,d)$. Consider the Riesz potential~$\I_\alpha$,
\eq{
\I_{\alpha}[\mu] = \mathcal{F}^{-1}\Big[|\cdot|^{-\alpha}\hat{\mu}\Big],\quad \mu \in \Me(\R^d).
}
We may define the action of~$\I_\alpha$ on vector-valued functions and charges simply applying it to each coordinate individually. 

A simple computation shows that~$\I_{\alpha}[\delta_0] \notin L_{\frac{d}{d-\alpha}}$, where~$\delta_x$ is the Dirac delta at~$x\in\R^d$, and as a consequence,~$\I_{\alpha}\colon L_1 \nrightarrow L_{\frac{d}{d-\alpha}}$. In other words, the Hardy--Littlewood--Sobolev inequality fails at~$p=1$ (in the present text~$p$ stands for the parameter on the left hand side of the inequality, which is usually denoted by~$q$, since the summability parameter on the right hand side is always equal to one in our considerations). See Chapter $5$ in~\cite{Stein1970} for the original Hardy--Littlewood--Sobolev inequality and its applications.

\begin{Def}
We say that~$\Omega$ satisfies the cancellation condition if
\eq{\label{Cancel}
\bigcap_{\zeta \in S^{d-1}}\Omega(\zeta) = \{0\}.
}
\end{Def}
This condition was introduced by Roginskaya--Wojciechowski in~\cite{RoginskayaWojciechowski2006} and Van Schaftingen in~\cite{VanSchaftingen2013} independently. 

The space~$\Me(\R^d,\Co^l)$ has natural tensor product structure. If~$a \in \Co^l$ and~$\mu\in \Me(\R^d)$ is a scalar-valued charge, then the charge~$a\otimes \mu$ is defined by the formula
\eq{
a\otimes \mu (B) = \mu(B)a,
}
where~$B\subset \R^d$ is an arbitrary Borel set. 
\begin{Rem}
The cancellation condition~\eqref{Cancel} is equivalent to the absence of the charges~$a\otimes \delta_0$\textup,~$a\in \Co^l \setminus \{0\}$\textup, in the space~$\BV^\Omega$.
\end{Rem}
\begin{Th}\label{OurSobolevEmbedding}
Let~$\Omega$ satisfy the cancellation condition~\eqref{Cancel}. Then\textup,~$\I_{\alpha}\colon W_1^\Omega \to \dot{B}_{\frac{d}{d-\alpha},1}^{0,1}$ when~$\alpha \in (0,d)$.
\end{Th}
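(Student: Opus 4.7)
The natural plan is a Littlewood--Paley decomposition of $\I_\alpha f$ combined with the Hardy-space-type structure that the cancellation condition imposes on $W_1^\Omega$. I would introduce a smooth dyadic partition $\{\Delta_k\}_{k\in\Z}$ on the Fourier side, with $\Delta_k$ projecting onto the annulus $\{|\xi|\sim 2^k\}$; by the Littlewood--Paley characterization of the homogeneous Besov space on the left,
\[
\|\I_\alpha f\|_{\dot{B}^{0,1}_{p,1}}\sim \sum_{k\in\Z}\|\Delta_k \I_\alpha f\|_{L_{p,1}},\qquad p=\tfrac{d}{d-\alpha}.
\]

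On the annulus $\{|\xi|\sim 2^k\}$ the Riesz potential symbol $|\xi|^{-\alpha}$ equals $2^{-k\alpha}$ times a uniformly smooth bounded factor, so $\Delta_k \I_\alpha f = 2^{-k\alpha} T_k\Delta_k f$ for a family of convolution operators $T_k$ uniformly bounded on $L_1$. Combined with Bernstein's inequality in Lorentz form --- $\|h\|_{L_{p,1}}\lesssim 2^{k\alpha}\|h\|_{L_1}$ for $h$ with Fourier support in $\{|\xi|\sim 2^k\}$, using that $d(1-1/p)=\alpha$ --- this yields the scale-invariant dyadic bound $\|\Delta_k \I_\alpha f\|_{L_{p,1}}\lesssim\|\Delta_k f\|_{L_1}$.

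The heart of the matter is then the summation $\sum_k\|\Delta_k f\|_{L_1}\lesssim\|f\|_{L_1}$, which fails for a generic $L_1$ function --- indeed already for a Dirac delta, whose Littlewood--Paley pieces have comparable $L_1$-norms and sum to $\infty$ --- so the restriction $f\in W_1^\Omega$ with cancellation must be used in an essential way. The cancellation condition $\bigcap_\zeta\Omega(\zeta)=\{0\}$ means that the orthogonal complements $\Omega^\perp(\zeta)$ span all of $\Co^l$ as $\zeta$ ranges over $S^{d-1}$, so one should be able to select finitely many directions $\zeta_1,\ldots,\zeta_N\in S^{d-1}$ together with a smooth angular partition of unity on $S^{d-1}$ adapted to them, and decompose $f$ in an $L_1$-stable way into pieces, each a vector-valued analogue of a classical Hardy atom with moment-vanishing conditions dual to the $\Omega^\perp$-choices. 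For such atom-like pieces the required $\ell^1$-summation of $L_1$ norms of Littlewood--Paley components follows from Fefferman--Stein-type estimates.

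The main obstacle is precisely this last step: converting the qualitative condition $\bigcap\Omega(\zeta)=\{0\}$ into a quantitative, $L_1$-stable atomic decomposition of $W_1^\Omega$ whose pieces are summable across Littlewood--Paley scales. The decomposition must simultaneously respect the angular Fourier-side constraint $\hat f(\xi)\in\Omega(\xi/|\xi|)$ and remain a bounded operator on $L_1$, which is delicate because the $\Omega$-constraint is dimension-varying with $\zeta$. Once this is achieved, the remainder of the proof is routine Fourier-analytic bookkeeping: assemble the dyadic estimate with the atomic summation to obtain the Besov bound, and verify the uniformity of the constants under dilations and translations inherited from the structure of $W_1^\Omega$.
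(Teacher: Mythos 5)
Your first two steps are sound and coincide with the paper's own reduction: the Littlewood--Paley (in the paper, heat-semigroup) characterization of $\dot B^{0,1}_{p,1}$ together with the O'Neil--Bernstein bound $\|h\|_{L_{p,1}}\lesssim 2^{k\alpha}\|h\|_{L_1}$ on each annulus does give $\|\Delta_k\I_\alpha f\|_{L_{p,1}}\lesssim\|\Delta_k f\|_{L_1}$ with $\frac{d(p-1)}{p}=\alpha$. The gap is that the summation you then need, $\sum_k\|\Delta_k f\|_{L_1}\lesssim\|f\|_{L_1}$ for $f\in W_1^\Omega$, i.e.\ $W_1^\Omega\hookrightarrow\dot B^{0}_{1,1}$, is \emph{false} even in the model canceling case $\Omega(\zeta)=\Co\zeta$ of Example~\ref{GagliardoNirenbergExample}: there it asserts $\dot W^1_1(\R^d)\hookrightarrow\dot B^1_{1,1}(\R^d)$, and the classical strict inclusions $\dot B^1_{1,1}\subsetneq\dot W^1_1\subsetneq\dot B^1_{1,\infty}$ rule this out. (If it were true, Theorem~\ref{OurSobolevEmbedding} would follow by blockwise Bernstein in one line, whereas already Kolyada's endpoint result is hard.) The proposed repair via a Hardy-space atomic decomposition cannot work either: even the weaker containment $W_1^\Omega\subset\mathcal H_1$ requires Janson's antisymmetry condition $\Omega(\zeta)\cap\Omega(-\zeta)=\{0\}$, which is strictly stronger than the cancellation condition~\eqref{Cancel} and already fails for the gradient ($\Co\zeta\cap\Co(-\zeta)=\Co\zeta$); and in any case rough $L_1$-normalized atoms have no high-frequency decay of $\|\Delta_k a\|_{L_1}$, so their Littlewood--Paley pieces are not $\ell^1$-summable in $L_1$.

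The scales cannot be decoupled the way you propose; the whole difficulty is that Bernstein loses exactly the factor $2^{k\alpha}$ that the Riesz potential gains, so one must show that the trivial per-scale bound is not saturated at most scales. The paper does this by setting $f_k=\Heat[f](\fdot,A^{-2k})$, telescoping the nondecreasing sequence $\|f_k\|_{L_1}$, and classifying localized space--time atoms: on \emph{convex} atoms the local $L_1$ mass genuinely grows and the contribution is charged to the telescoping sum (Proposition~\ref{ConvexControl}); on \emph{flat} atoms a compactness argument (Theorem~\ref{Compactness}) shows $f$ is locally close to a positive rank-one measure, and the improved Bennett--Carbery--Tao monotonicity formula (Theorem~\ref{Robust}, which uses precisely that the cone $\MM^\WW$ omits $\delta_0$, i.e.\ condition~\eqref{Cancel}) yields a quantitative gain $A^{-\delta}$ per scale that is summed geometrically along the trees of the graph $\Gamma$. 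Some substitute for this cross-scale gain on the non-convex part is what your outline is missing.
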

The space~$B$ in the above theorem is the Besov--Lorentz space with the best parameters possible for such an embedding. For details on Besov--Lorentz spaces, see~\cite{Peetre1976}, we provide a brief outline only. The norm in this space is defined by the rule
\eq{\label{BesovLorentzFormula}
\|g\|_{\dot{B}_{p,1}^{0,1}} = \sum\limits_{k\in\Z} \|g*(\psi_k - \psi_{k-1})\|_{L_{p,1}},
}
where the functions~$\psi_k(x) = A^{dk}\psi(A^{k}x)$ form an approximate identity constructed from a smooth function~$\psi$ whose Fourier transform equals one in a neighborhood of the origin and is compactly supported; here~$A > 1$ is an auxiliary parameter (different choices of~$A$ lead to equivalent norms). The definition of the Lorentz semi-norm we use may be found in~\eqref{LorentzDef} below. By the limit relations
\eq{
g*\psi_k \stackrel{\scriptscriptstyle L_{p,1}}{\longrightarrow} g, k \to \infty, \quad g*\psi_k \stackrel{\scriptscriptstyle L_{p,1}}{\longrightarrow} 0, k\to -\infty,
}
and the triangle inequality in~$L_{p,1}$ (note that~$p > 1$),
\eq{
\|g\|_{L_{p,1}} \lesssim \|g\|_{\dot{B}_{p,1}^{0,1}}, 
}
so Theorem~\ref{OurSobolevEmbedding} leads to the following corollary.
\begin{Cor}\label{LorentzCorollary}
Let~$\Omega$ satisfy the cancellation condition~\eqref{Cancel}. Then\textup,~$\I_{\alpha}\colon W_1^\Omega \to L_{\frac{d}{d-\alpha},1}$ when~$\alpha \in (0,d)$.
\end{Cor}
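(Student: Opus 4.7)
The plan is to deduce the corollary from Theorem~\ref{OurSobolevEmbedding} via the continuous embedding $\dot{B}_{p,1}^{0,1} \hookrightarrow L_{p,1}$ at $p = \frac{d}{d-\alpha}$, where $p > 1$ because $\alpha \in (0,d)$; this reduction is already essentially outlined in the paragraph preceding the corollary, and I would only make the argument precise.

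First, I would invoke Theorem~\ref{OurSobolevEmbedding} for an arbitrary $f \in W_1^\Omega$ to obtain
\eq{
\|\I_{\alpha}[f]\|_{\dot{B}_{p,1}^{0,1}} \lesssim \|f\|_{L_1}.
}
Next, setting $g = \I_\alpha[f]$, I would exploit the telescoping structure built into the Besov--Lorentz norm~\eqref{BesovLorentzFormula}: for any integers $M,N$ with $-M < N$,
\eq{
g*\psi_N - g*\psi_{-M} = \sum_{k=-M+1}^{N} g*(\psi_k - \psi_{k-1}).
}
Combining the two $L_{p,1}$-convergence relations quoted in the text ($g*\psi_N \to g$ as $N \to \infty$ and $g*\psi_{-M} \to 0$ as $M \to \infty$) with the triangle inequality in $L_{p,1}$, which is available thanks to $p > 1$, I would pass to the limit to obtain
\eq{
\|g\|_{L_{p,1}} \leq \sum_{k \in \Z} \|g*(\psi_k-\psi_{k-1})\|_{L_{p,1}} = \|g\|_{\dot{B}_{p,1}^{0,1}}.
}
Chaining these two estimates then yields $\|\I_\alpha[f]\|_{L_{p,1}} \lesssim \|f\|_{L_1}$, which is the conclusion of the corollary.

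I do not expect any serious obstacle, since the argument reduces to Theorem~\ref{OurSobolevEmbedding} together with standard properties of Besov--Lorentz spaces. The only minor points that would deserve verification are the two $L_{p,1}$-convergence relations (which follow from standard approximate-identity estimates once one knows that $\|g\|_{\dot{B}_{p,1}^{0,1}}$ is finite) and the triangle inequality for $L_{p,1}$ with $p>1$; both may be referenced to~\cite{Peetre1976}.
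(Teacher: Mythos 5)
Your proposal is correct and coincides with the paper's own justification: the corollary is obtained exactly by combining Theorem~\ref{OurSobolevEmbedding} with the embedding $\dot{B}_{p,1}^{0,1}\hookrightarrow L_{p,1}$, which the paper derives from the same telescoping identity, the two $L_{p,1}$-limit relations, and the triangle inequality in $L_{p,1}$ for $p>1$. No gaps.
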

We have used the notation~$\lesssim$. We use it in the following meaning:~$A \lesssim B$ signifies there exists a constant~$C$ that does not depend on certain parameters and such that~$A \leq C B$. The said independence is usually either discussed somewhere nearby or is clear from the context.

In the case of rational~$\Omega$ considered in Example~\ref{JeanExample} and~$\alpha = 1$, Corollary~\ref{LorentzCorollary} solves Open Problem~$8.3$ in~\cite{VanSchaftingen2013}. The particular divergence free case (as in Example~\ref{DivFree}) has been recently considered in~\cite{HernandezSpector2020}, solving Open Problem~$1$ in~\cite{BourgainBrezis2007}. Corollary~\ref{LorentzCorollary}, in its turn, leads to a form of Hardy's inequality since~$|\fdot|^{-\alpha}$ belongs to~$L_{\frac{d}{\alpha},\infty}$, which is the dual space to~$L_{\frac{d}{d-\alpha},1}$.
\begin{Cor}\label{HardyCorollary}
If~$\Omega$ satisfies the cancellation condition~\eqref{Cancel} and~$\alpha \in (0,d)$\textup, then
\eq{
\int\limits_{\R^d}\frac{|\I_\alpha [f](x)|}{|x-x_0|^{\alpha}}\,dx \lesssim \|f\|_{W_1^\Omega}
}
for any~$x_0 \in \R^d$ and~$f\in W_1^\Omega$.
\end{Cor}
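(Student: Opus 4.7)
The plan is to obtain the Hardy-type inequality by dualizing the weight $|x-x_0|^{-\alpha}$ against $\I_\alpha[f]$ in the Lorentz space pairing. First, since the hypothesis on $\Omega$, the space $W_1^\Omega$, and the operator $\I_\alpha$ are all translation invariant, I would reduce to the case $x_0 = 0$ by replacing $f(\fdot)$ with $f(\fdot + x_0)$ (which has the same $W_1^\Omega$ norm).

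Next, I would check that the function $x \mapsto |x|^{-\alpha}$ belongs to the weak Lorentz space $L_{\frac{d}{\alpha},\infty}(\R^d)$ with a norm depending only on $d$ and $\alpha$. This is a direct computation of the distribution function: $|\{x : |x|^{-\alpha} > t\}| = c_d t^{-d/\alpha}$. With this in hand, I would apply the Hölder-type inequality for Lorentz spaces in the duality pairing between $L_{\frac{d}{d-\alpha},1}$ and $L_{\frac{d}{\alpha},\infty}$, which yields
\eq{
\int\limits_{\R^d} \frac{|\I_\alpha[f](x)|}{|x|^{\alpha}}\,dx \lesssim \|\I_\alpha[f]\|_{L_{\frac{d}{d-\alpha},1}} \cdot \big\||\fdot|^{-\alpha}\big\|_{L_{\frac{d}{\alpha},\infty}}.
}

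Finally, the second factor is an absolute constant, and the first factor is controlled by $\|f\|_{W_1^\Omega}$ via Corollary~\ref{LorentzCorollary}, which applies precisely because $\Omega$ satisfies the cancellation condition~\eqref{Cancel}. Chaining these estimates gives the desired bound.

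I do not expect any serious obstacle here: the entire analytic content lies in Corollary~\ref{LorentzCorollary} (hence in Theorem~\ref{OurSobolevEmbedding}), and this corollary is a routine duality consequence. The only minor point worth stating explicitly is the Lorentz Hölder inequality for the pair $(L_{p,1}, L_{p',\infty})$, which holds with absolute constant for $p \in (1,\infty)$.
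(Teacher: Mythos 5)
Your proposal is correct and is essentially the paper's own argument: the paper derives Corollary~\ref{HardyCorollary} from Corollary~\ref{LorentzCorollary} precisely via the observation that $|\fdot|^{-\alpha}\in L_{\frac{d}{\alpha},\infty}$, the dual of $L_{\frac{d}{d-\alpha},1}$. The translation-invariance reduction and the explicit distribution-function computation you add are routine and consistent with what the paper leaves implicit.
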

One may use the embedding~$L_{p,1}\hookrightarrow L_p$ to obtain yet another corollary that solves Open Problem~$8.2$ in~\cite{VanSchaftingen2013} (see also Open Problem~$1$ in~\cite{VanSchaftingen2010}).
\begin{Cor}
Let~$\Omega$ satisfy the cancellation condition~\eqref{Cancel}. Then\textup,~$\I_{\alpha}\colon W_1^\Omega \to \dot{B}_{\frac{d}{d-\alpha}}^{0,1}$ when~$\alpha \in (0,d)$.
\end{Cor}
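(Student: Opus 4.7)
The plan is to derive this corollary directly from Theorem~\ref{OurSobolevEmbedding} together with the elementary fact that the Lorentz space $L_{p,1}$ embeds continuously into $L_p$ whenever $p>1$. Since $\alpha\in(0,d)$, the exponent $p=\frac{d}{d-\alpha}$ satisfies $p>1$, so $\|h\|_{L_p}\lesssim \|h\|_{L_{p,1}}$ for every scalar or vector-valued measurable function $h$.

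I would first fix the analogue of formula~\eqref{BesovLorentzFormula} for the Besov space on the right-hand side of the corollary, namely
\[
\|g\|_{\dot{B}_p^{0,1}} = \sum_{k\in\Z}\|g*(\psi_k-\psi_{k-1})\|_{L_p},
\]
using the same Littlewood--Paley decomposition built from the functions $\psi_k$ as in~\eqref{BesovLorentzFormula}. Applying the pointwise embedding $L_{p,1}\hookrightarrow L_p$ to each block $g*(\psi_k-\psi_{k-1})$ and summing over $k\in\Z$ yields the continuous embedding
\[
\|g\|_{\dot{B}_{\frac{d}{d-\alpha}}^{0,1}} \lesssim \|g\|_{\dot{B}_{\frac{d}{d-\alpha},1}^{0,1}}.
\]

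Combining this with Theorem~\ref{OurSobolevEmbedding} applied to an arbitrary $f\in W_1^\Omega$ at the same exponent $p=\frac{d}{d-\alpha}$ gives
\[
\|\I_\alpha[f]\|_{\dot{B}_{\frac{d}{d-\alpha}}^{0,1}} \lesssim \|\I_\alpha[f]\|_{\dot{B}_{\frac{d}{d-\alpha},1}^{0,1}} \lesssim \|f\|_{L_1},
\]
which is exactly the required boundedness. I do not foresee a substantive obstacle: all the analytic work lives in Theorem~\ref{OurSobolevEmbedding}, and the present corollary merely records that the finer Lorentz measurement of each frequency band may be relaxed to the usual Lebesgue measurement without affecting summability in $k$.
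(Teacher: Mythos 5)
Your proof is correct and is exactly the argument the paper intends: the corollary is obtained from Theorem~\ref{OurSobolevEmbedding} by applying the continuous embedding $L_{p,1}\hookrightarrow L_p$ (valid since $p=\frac{d}{d-\alpha}>1$) to each Littlewood--Paley block and summing over $k$. No further comment is needed.
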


Since~$\I_{\alpha}[\delta_0] \notin L_{\frac{d}{d-\alpha}}$ for~$\alpha \in (0,d)$, condition~\eqref{Cancel} in Theorem~\ref{OurSobolevEmbedding} is necessary when~$\alpha \in (0,d)$. For the limit case~$\alpha = d$, it is not, see~\cite{Raita2019} and~\cite{Stolyarov2020} for details. Our considerations also lead to interesting information in this case, see Remark~\ref{plessthantwo} below.


Theorem~\ref{OurSobolevEmbedding} (and our main Theorem~\ref{Main} below) has many predecessors. The Hardy--Littlewood--Sobolev inequality was invented by Sobolev in~\cite{Sobolev1938} as a tool to prove what is now called the Sobolev embedding theorem; unfortunately, his method did not work for the limit summability exponent~$1$. The simplest and the most natural case mentioned in Example~\ref{GagliardoNirenbergExample} is equivalent via the Calder\'on--Zygmund theory to a certain limiting Sobolev embedding for the space~$\dot{W}_1^1(\R^d)$. If~$\alpha = 1$ and we embed into Lebesgue space~$L_{\frac{d}{d-1}}$, we get the classical Gagliardo--Nirenberg inequality obtained by Gagliardo~\cite{Gagliardo1959} and Nirenberg~\cite{Nirenberg1959}; see the book~\cite{Mazya2011} for more information about this classical case. The embedding into the best possible Lorentz space~$L_{\frac{d}{d-1},1}$ was first proved by Alvino in~\cite{Alvino1977} and then rediscovered by Poornima~\cite{Poornima1983} and Tartar (see~\cite{Tartar1998} for more historical remarks on this question). For higher order smoothnesses and Besov spaces, the complete result was obtained by Kolyada in~\cite{Kolyada1993}, see the papers~\cite{BesovIlin1969} and~\cite{Solonnikov1972} for earlier results and~\cite{Spector2020} for a different approach. We note that most of these results are formulated in the more general anisotropic setting, i.e. when derivates with respect to different coordinates may have different orders. We refer the reader to the book~\cite{BIN1978} for details on anisotropic theory.

The inequality
\eq{\label{SolenoidSobolev}
\|\I_1[f]\|_{L_{\frac{d}{d-1}}} \lesssim \|f\|_{L_1},\quad \mathrm{div} f=0,
}
was obtained by Bourgain and Brezis in~\cite{BourgainBrezis2004}. In a sense,  this inequality served as a turning point for the whole theory.  The proof in~\cite{BourgainBrezis2004} relied upon Smirnov's theorem from~\cite{Smirnov1994}, which decomposes an arbitrary solenoidal charge into an integral of currents tangent to smooth curves, and a particular case of~\eqref{SolenoidSobolev} for such special charges already proved in~\cite{BBM2004} . Another proof was suggested by Van Schaftingen in~\cite{VanSchaftingen2004}; see the papers~\cite{BourgainBrezis2002} and~\cite{VanSchaftingen2004one} for related results. The inequality
\eq{\label{SolenoidSobolevLorentz}
\|\I_1[f]\|_{L_{\frac{d}{d-1},1}} \lesssim \|f\|_{L_1},\quad \mathrm{div} f=0,
}
was a long-standing conjecture (the question goes back to~\cite{BourgainBrezis2007}) until Hernandez and Spector have resolved it in~\cite{HernandezSpector2020}. Their proof also relies upon Smirnov's theorem. According to the knowledge of the author, the inequality
\eq{
\|\I_1[f]\|_{\dot{B}^{0,1}_{\frac{d}{d-1},1}} \lesssim \|f\|_{L_1},\quad \mathrm{div} f=0,
}
which is a complete form of Theorem~\ref{OurSobolevEmbedding} in this case, is unknown. 

The case of general differential operators as in Example~\ref{JeanExample},~$\alpha = 1$, and the Lebesgue space instead of Besov--Lorentz was considered by Van Schaftingen in~\cite{VanSchaftingen2013} (see earlier papers ~\cite{Strauss1973} (symmetric gradient),~\cite{LanzaniStein2005} (Hodge differentials),~\cite{Mazya2007} (sharp constants in some of these inequalities),~\cite{BourgainBrezis2007} (higher order derivatives and related approximation problems), for particular cases, and the surveys~\cite{VanSchaftingen2014} and~\cite{Spector2019} for more historical details). The differential operators satisfying~\eqref{Cancel} are called canceling; the ellipticity condition might be replaced by a weaker constant rank condition, see~\cite{Raita2018}.
For results on Hardy's inequalities as in Corollary~\ref{HardyCorollary}, see~\cite{Mazya2010} and~\cite{BousquetVanSchaftingen2014}. For results on Lorentz spaces~$L_{\frac{d}{d-1},1}$ in the case of first order operators~$A$, see~\cite{SpectorVanSchaftingen2019}. For generalizations to metric spaces other than~$\R^d$ see~\cite{CVSYu2017}.

There is also a related problem about the sharp estimates of singularities of measures in~$\BV^\Omega$. The question is: 'What is the best possible bound from below for the lower Hausdorff dimension of~$\mu \in \BV^\Omega$?' It was raised in~\cite{RoginskayaWojciechowski2006}.  We note that if~$\Omega$ is purely antisymmetric (that is~$\Omega(\zeta)\cap \Omega(-\zeta) = \{0\}$ for any~$\zeta \in S^{d-1}$), then both~$\BV^\Omega$ and~$W_1^\Omega$ are contained in the real Hardy class~$\mathcal{H}_1(\mathbb{R}^d,\mathbb{C}^l)$ by the celebrated Uchiyama theorem~\cite{Uchiyama1982}; the necessity of the antisymmetry condition was observed by Janson earlier in~\cite{Janson1977}. Therefore, any measure~$\mu \in \BV^\Omega$ is absolutely continuous with respect to the Lebesgue measure. The question for general~$\Omega$ seems to be open. Partial results were obtained in~\cite{APHF2019},~\cite{AyoushWojciechowski2017},~\cite{RoginskayaWojciechowski2006}, and~\cite{StolyarovWojciechowski2014}. The author supposes that the methods of the present text may also help in this related problem, see the preprint~\cite{Stolyarov2020bis}.

For a related problem on trace inequalities, see~\cite{GRvS2019} (the classical case of the first gradient may be found in~\cite{Mazya2011}).  Theorem~\ref{OurSobolevEmbedding} (via trace inequalities for Riesz potentials, see~\cite{AdamsHedberg1999}) implies the following 'trace' theorem.
\begin{Cor}
Let~$\Omega$ satisfy the cancellation condition~\eqref{Cancel} and let~$\alpha \in (0,d)$. Then\textup, the embedding~$\I_{\alpha}\colon W_1^\Omega \to L_q(\mu)$ is continuous provided~$q > 1$ and the measure~$\mu$ satisfies the Frostman-type condition
\eq{
\mu(B_r(x))^{\frac{1}{q}} \lesssim r^{d-\alpha}
}
for any radius~$r>0$ and center~$x\in \R^d$ of the Euclidean ball~$B_r(x)$ \textup(with uniform constants\textup).
\end{Cor}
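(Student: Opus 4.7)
The plan is to deduce the trace embedding by combining Corollary~\ref{LorentzCorollary} with the classical trace inequality of Adams for Riesz potentials, as found in Chapter~$7$ of~\cite{AdamsHedberg1999}. The key idea is to split $\I_\alpha = \I_{\alpha_1}\circ\I_{\alpha_2}$ using the semigroup property of the Riesz potentials, apply Corollary~\ref{LorentzCorollary} to the inner factor $\I_{\alpha_2}$ to land into a Lebesgue space $L_p(\R^d)$, and then invoke Adams' trace theorem for $\I_{\alpha_1}$ to land into $L_q(\mu)$. The hypothesis that $q>1$ in the statement is exactly what provides the room needed to carry out the splitting.

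The parameters are chosen as follows. Since $q>1$, one may pick $\alpha_2\in(0,\alpha)$ so small that $p:=\frac{d}{d-\alpha_2}$ satisfies $1<p<q$; concretely,~$\alpha_2 = \min\bigl\{\tfrac{\alpha}{2},\tfrac{d(q-1)}{2q}\bigr\}$ works. Setting $\alpha_1 = \alpha - \alpha_2\in(0,d)$, Corollary~\ref{LorentzCorollary} applied with order $\alpha_2$ yields
\eq{
\|\I_{\alpha_2}[f]\|_{L_{p,1}} \lesssim \|f\|_{W_1^\Omega},\qquad f\in W_1^\Omega,
}
and the continuous embedding $L_{p,1}\hookrightarrow L_p$ (valid since $p>1$) gives $\I_{\alpha_2}[f]\in L_p$ with a controlled norm.

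Adams' trace theorem, in the form relevant here, asserts that for $1<p<q<\infty$ and $0<\alpha_1<d/p$, the inequality $\|\I_{\alpha_1}[g]\|_{L_q(\mu)} \lesssim \|g\|_{L_p}$ holds for every $g\in L_p$ if and only if $\mu(B_r(x)) \lesssim r^{q(d/p-\alpha_1)}$ uniformly in $x\in\R^d$ and $r>0$. A direct computation gives
\eq{
q\Big(\tfrac{d}{p}-\alpha_1\Big) = q\bigl(d - \alpha_2 - \alpha_1\bigr) = q(d-\alpha),
}
so the Frostman-type hypothesis $\mu(B_r(x))^{1/q}\lesssim r^{d-\alpha}$ is precisely the condition required by Adams. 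Moreover, $\alpha_1 p = \alpha_1 d/(d-\alpha_2) < d$ because $\alpha < d$, so the restriction $\alpha_1 p < d$ is automatic. Combining the two estimates via the identity $\I_\alpha[f] = \I_{\alpha_1}[\I_{\alpha_2}[f]]$ concludes the proof.

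The argument is essentially a bookkeeping exercise. The only nontrivial input beyond Corollary~\ref{LorentzCorollary} is Adams' trace theorem, which is classical; the main obstacle is simply the verification that the split parameters $(\alpha_1,\alpha_2,p,q)$ are admissible, which reduces to the scale-invariant identity $q(d/p-\alpha_1)=q(d-\alpha)$ and the strict inequality $q>1$ furnished by the hypothesis.
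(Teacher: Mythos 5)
Your proof is correct and is essentially the argument the paper intends: the paper gives no written proof, merely remarking that the corollary follows from Theorem~\ref{OurSobolevEmbedding} ``via trace inequalities for Riesz potentials'' with a citation to~\cite{AdamsHedberg1999}, and your factorization $\I_\alpha=\I_{\alpha_1}\circ\I_{\alpha_2}$ with Corollary~\ref{LorentzCorollary} on the inner factor and Adams' trace theorem on the outer one is the natural implementation of that remark. The parameter bookkeeping (choice of $\alpha_2$ so that $1<p<q$, the identity $q(d/p-\alpha_1)=q(d-\alpha)$, and the admissibility condition $\alpha_1p<d$) all checks out.
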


We have already said that the classical embedding theorems (i.e. for classical Sobolev spaces) allow anisotropic generalizations (see~\cite{Kolyada1993} for the strongest possible result in the classical setting). Some partial results in the anisotropic setting and spaces of functions in the style of~$W_1^\Omega$ were obtained in~\cite{KislyakovMaximov2018},~\cite{KMS2015}, and~\cite{Stolyarov2020}; see the second of these papers for applications to questions in the Banach space theory (the idea that inequalities of the type we discuss might deliver interesting information about the isomorphic type of Banach spaces goes back to~\cite{Kislyakov1975} and~\cite{LindenstraussPelczynski1968}). The author supposes that the methods of the present text may be transferred to the anisotropic setting.

The techniques we will use are different from those usually used in the field; however, there is some similarity to~\cite{BourgainBrezis2002} and~\cite{BourgainBrezis2007}. We will rely upon Harmonic Analysis tools such as the time-frequency decomposition and Harnack's inequality (we will provide a more detailed description of our methods at the end of this section). This allows us to get rid of differential structure and work in a more general setting of Fourier restrictions. 
\begin{St}\label{Density}
The set of Schwartz~$\Co^l$-valued functions is dense in~$W_1^\Omega$ and is $*$-weak dense in~$\BV^\Omega$ \textup(in the $*$-weak topology inherited from~$\Me(\R^d,\Co^l)$\textup).
\end{St}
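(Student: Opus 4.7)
The plan has two phases. The first phase exploits the fact that convolution with any scalar Schwartz function $\phi$ preserves both $W_1^\Omega$ and $\BV^\Omega$: on the Fourier side this corresponds to multiplication by the scalar $\hat\phi(\xi)$, which keeps $\hat f(\xi) \in \Omega(\xi/|\xi|)$ inside the subspace $\Omega(\xi/|\xi|)$. I take a Schwartz approximate identity $\phi_\epsilon(x) = \epsilon^{-d}\phi(x/\epsilon)$ with $\phi \in \Sw(\R^d)$, $\int\phi = 1$; this gives $\phi_\epsilon * f \in W_1^\Omega \cap C^\infty$ converging to $f$ in $L_1$, and $\phi_\epsilon * \mu \in W_1^\Omega \cap L_1 \cap C^\infty$ converging to $\mu$ in the $*$-weak topology of $\Me(\R^d,\Co^l)$. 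In particular, the $*$-weak density statement for $\BV^\Omega$ reduces to the $L_1$-density statement for $W_1^\Omega$, and both reduce further to approximating a given smooth $g \in W_1^\Omega \cap L_1 \cap C^\infty$ by Schwartz elements of $W_1^\Omega$ in $L_1$.

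In the second phase, spatial truncation $h_R := \eta(\cdot/R)\, g$, with $\eta \in C^\infty_c(\R^d)$, $\eta \equiv 1$ near the origin, produces compactly supported smooth approximations $h_R \to g$ in $L_1$, but destroys the spectral condition. I would restore it by reprojection, using the smooth $0$-homogeneous orthogonal projection $P(\xi)$ onto $\Omega(\xi/|\xi|)$, defined and smooth on $\R^d\setminus\{0\}$; the candidate Schwartz approximations are $g_R := \mathcal{F}^{-1}[P\hat h_R]$, which formally lie in $W_1^\Omega$. Equivalently one may rewrite $g_R - g = -\mathcal{F}^{-1}[P\,\widehat{(1-\eta(\cdot/R))g}]$, so everything concentrates on the far-field tail of the smooth function $g$.

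The hard part will be handling two interrelated obstacles. First, because $P$ has a conical singularity at the origin, $P\hat h_R$ is smooth at $\xi = 0$ only if $\hat h_R$ vanishes to sufficient order there; I would enforce this by subtracting a small Schwartz correction $s_R \in W_1^\Omega$ that matches the lowest Taylor coefficients of $\hat h_R$ at $0$, using the cancellation condition (or, in its absence, the weaker statement $\hat g(0) \in \bigcap_\zeta \Omega(\zeta)$) to control $\|s_R\|_{L_1}$. Second, because $P$ is a Calder\'on--Zygmund-type Fourier multiplier it is unbounded on $L_1$, so the convergence $g_R \to g$ in $L_1$ does not follow directly from $h_R \to g$; one must exploit the specific form of $h_R - g = -(1-\eta(\cdot/R))g$ as a far-field truncation of a smooth function together with off-diagonal decay of the kernel $\mathcal{F}^{-1}[P]$, decomposing the tail dyadically and using cancellation to gain the integrability. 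These two issues form the bulk of the technical work.
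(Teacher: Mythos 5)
Your reductions in the first phase are fine and match the paper's (mollify, observe that the two statements coincide after that, note $\hat g(0)\in\bigcap_\zeta\Omega(\zeta)$ by continuity of $\hat g$). But the second phase, which you yourself flag as "the bulk of the technical work," contains the actual content of the proposition, and the two fixes you sketch do not go through. (i) Subtracting a corrector that matches "the lowest Taylor coefficients" of $\hat h_R$ at the origin is not enough: for $g_R=\mathcal{F}^{-1}[P\hat h_R]$ to be Schwartz you need $P\hat h_R$ smooth at $\xi=0$, hence $\hat h_R$ vanishing to \emph{infinite} order there; and there is no reason a Schwartz corrector lying in $W_1^\Omega$ with a prescribed infinite (or even high finite) jet at the origin exists --- the zeroth coefficient can be absorbed via $\bigcap_\zeta\Omega(\zeta)$, but the higher coefficients carry directional data that need not be compatible with the fiberwise constraint $\hat s_R(\xi)\in\Omega(\xi/|\xi|)$. (ii) For the $L_1$ convergence, the error is $T[(1-\eta(\cdot/R))g]$ with $T$ a Calder\'on--Zygmund multiplier; the tail $(1-\eta(\cdot/R))g$ is small in $L_1$ but carries no cancellation (its mean is merely close to zero) and no decay or size information beyond integrability, since $g=\phi_\eps*f$ with $f$ only in $L_1$. "Off-diagonal decay plus dyadic decomposition plus cancellation" is not an argument here: the problematic contribution is exactly where $x$ sits near the support of the tail, which is where $L_1$-unboundedness of $T$ lives. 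As stated, both obstacles remain open.

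The paper avoids both by performing the truncation and mollification on the \emph{frequency} side rather than in space. One first multiplies $\hat f$ by $\Phi(\cdot/n)$ ($\Phi$ smooth, compactly supported, $\equiv 1$ near $0$) to compactify the spectrum, subtracts $a\otimes\Phi_0$ with $a=\int f\in\bigcap_\zeta\Omega(\zeta)$ to kill the mean, then subtracts $\mathcal{F}^{-1}[\hat f\,\Phi(n\cdot)]$ to excise a neighborhood of the origin from the spectrum (this tends to $0$ in $L_1$ precisely because the mean vanishes). After these reductions $\spec f$ lies in a fixed annulus, and the approximants are $\mathcal{F}^{-1}[\pi_{\Omega(\xi/|\xi|)}[\hat f*\Phi_n]]$: convolving $\hat f$ with $\Phi_n$ makes it smooth and keeps the spectrum in a slightly larger annulus, on which the projection multiplier can be smoothly cut off and hence is convolution with a Schwartz kernel, i.e.\ bounded on $L_1$. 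Both the conical singularity at the origin and the Calder\'on--Zygmund unboundedness --- the two issues blocking your route --- simply never arise. If you want to keep a spatial picture, note that the paper's "truncation" amounts to multiplying $f$ by $\check\Phi(\cdot/n)$, a Schwartz function with \emph{compactly supported Fourier transform}; replacing your compactly supported cutoff $\eta(\cdot/R)$ by such a multiplier is exactly what keeps the spectrum away from the origin and makes the reprojection harmless.
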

Formally, this proposition is not needed for the proof of Theorem~\ref{OurSobolevEmbedding}. We present it here since it leads to a useful definition and a more elegant generalization of Theorem~\ref{OurSobolevEmbedding}, that is Theorem~\ref{Main} below.

For any~$L \in G(l,k)$, we denote by~$\pi_L$ the orthogonal projection of~$\Co^l$ onto~$L$.
\begin{proof}[Proof of Proposition~\ref{Density}]
We deal with approximation in~$W_1^\Omega$, the approximation in the space of charges may be obtained with a similar reasoning (in fact, the two cases do not differ at all after the first step, since a charge whose Fourier transform is compactly supported is absolutely continuous with respect to the Lebesgue measure). Let~$\{\Phi_n\}_n$ be an approximate identity constructed from a smooth compactly supported scalar function~$\Phi$ with unit integral in the usual way:
\eq{
\Phi_n(\xi) = n^d \Phi(n\xi),\qquad \xi\in\R^d,\ n\in\N.
}
Let us also require that~$\Phi = 1$ in a neighborhood of the origin. Pick an arbitrary~$f \in W_1^\Omega$. Our target is to construct a sequence of~$W^\Omega_1\cap \Sw(\R^d,\Co^l)$ functions that approximate~$f$.

First, we may assume~$\hat{f}$ is compactly supported since the functions~$(\hat{f}(\xi)\Phi(\xi/n))\check{\phantom{i}}$ belong to~$W^\Omega_1$, have compactly supported Fourier transforms, and approximate~$f$ in~$L_1$ norm.

Second, we may assume~$\int_{\R^d} f = 0$. Indeed, if~$\int f = a \in \Co^l$, then
\eq{
a\in\!\!\! \bigcap_{\xi\in S^{d-1}}\!\!\!\Omega(\xi).
}
Thus, we may replace~$f$ with~$f - a\otimes \Phi_0$ since~$a\otimes \Phi_0 \in W_1^\Omega$ in this case.

Third, we may assume~$0\notin \spec f$. To justify it, we note that the functions~$f_n = f - (\hat{f}(\xi) \Phi(n\xi))\check{\phantom{i}}$ belong to~$W_1^\Omega$ and approximate~$f$ in~$L_1$ norm provided~$\int_{\R^d} f =0$. Since we have required~$\Phi = 1$ in a neighborhood of the origin, we have~$0\notin \spec f_n$.

With all the assumptions at hand, we construct the approximations by the formula
\eq{
f_n = \mathcal{F}^{-1}\Big[\pi_{\Omega(\xi/|\xi|)}\big[\hat{f}*\Phi_n(\xi)\big]\Big].
}
The projection inside the formula guarantees~$f_n \in W_1^\Omega$. We note that the operator of "Fourier projection", that is~$g\mapsto (\pi_{\Omega(\xi/|\xi|)}[\hat{g}(\xi)])\check{\phantom{i}}$, is bounded in the~$L_1\to L_1$ norm as long as the spectrum of the function~$g$ is separated from the origin and the infinity. Therefore, if~$n$ is sufficiently large, then
\eq{
\|f - f_n\|_{L_1} = \Big\|\mathcal{F}^{-1}\Big[\pi_{\Omega(\xi/|\xi|)}\big[\hat{f}(\xi) - \hat{f}*\Phi_n(\xi)\big]\Big]\Big\|_{L_1} \lesssim  \Big\|\mathcal{F}^{-1}\Big[\hat{f} - \hat{f}*\Phi_n\Big]\Big\|_{L_1} \to 0,
}
which is true since~$\check{\Phi}(0)=1$.
\end{proof}
\begin{Cor}
If Theorem~\textup{\ref{OurSobolevEmbedding}} is true\textup, then~$\I_{\alpha}$ also maps~$\BV^\Omega$ to~$\dot{B}_{\frac{d}{d-\alpha},1}^{0,1}$.
\end{Cor}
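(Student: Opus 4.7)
My plan is to transfer Theorem~\ref{OurSobolevEmbedding} from $W_1^\Omega$ to $\BV^\Omega$ by approximating a given charge $\mu\in\BV^\Omega$ with the Schwartz functions $f_n\in W_1^\Omega\cap\Sw(\R^d,\Co^l)$ supplied by Proposition~\ref{Density}, applying Theorem~\ref{OurSobolevEmbedding} to each $f_n$, and then passing to the limit by a lower-semicontinuity argument. Inspecting the construction in the proof of Proposition~\ref{Density}---truncation of the Fourier spectrum near zero and infinity followed by a Fourier projection that is bounded on $L_1$ on such spectra---one also obtains the uniform bound $\|f_n\|_{L_1}\lesssim \|\mu\|_{\Me}$.

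Next I would fix $k\in\Z$ and consider the Schwartz function $g_k := \I_\alpha[\psi_k-\psi_{k-1}]$; it is indeed Schwartz because $\widehat{\psi_k-\psi_{k-1}}$ is a smooth bump supported on an annulus bounded away from the origin, so multiplication by $|\xi|^{-\alpha}$ yields another smooth compactly supported function. From the identity $\I_\alpha[f_n]*(\psi_k-\psi_{k-1}) = f_n*g_k$ together with the $*$-weak convergence $f_n\to\mu$ tested against the Schwartz function $g_k(x-\cdot)$, I obtain pointwise convergence
\eq{
(f_n*g_k)(x)\longrightarrow (\mu*g_k)(x) =: \big(\I_\alpha[\mu]*(\psi_k-\psi_{k-1})\big)(x),\quad x\in\R^d.
}

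I would then invoke lower semicontinuity of the $L_{p,1}$ semi-norm under pointwise convergence (here $p=d/(d-\alpha)>1$), which follows from the elementary inequality $\mathbf{1}_{\{|\mu*g_k|>\lambda\}}\le\liminf_n\mathbf{1}_{\{|f_n*g_k|>\lambda\}}$ combined with two applications of the classical Fatou lemma (first to the distribution function at each level $\lambda$, then to the integral over $\lambda$ in the formula for the Lorentz semi-norm). Summing the resulting bound over an arbitrary finite $F\subset\Z$ and invoking Theorem~\ref{OurSobolevEmbedding},
\eq{
\sum_{k\in F}\|\I_\alpha[\mu]*(\psi_k-\psi_{k-1})\|_{L_{p,1}} \leq \liminf_n \|\I_\alpha[f_n]\|_{\dot{B}_{p,1}^{0,1}} \lesssim \sup_n\|f_n\|_{L_1}\lesssim \|\mu\|_{\Me},
}
and taking the supremum over finite $F\subset\Z$ completes the estimate. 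The only step demanding genuine care is securing the uniform $L_1$ bound on the Schwartz approximants; the Fatou-type lower semicontinuity of $L_{p,1}$ and the interchange $\I_\alpha[f_n]*(\psi_k-\psi_{k-1})=f_n*g_k$ are routine.
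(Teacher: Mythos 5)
Your argument is correct and is exactly the one the paper intends: the corollary is stated right after Proposition~\ref{Density} with no separate proof, the point being precisely that one approximates $\mu\in\BV^\Omega$ $*$-weakly by Schwartz functions in $W_1^\Omega$ with uniformly controlled $L_1$ norms and passes to the limit by Fatou-type lower semicontinuity of the $L_{p,1}$ quasi-norm on each Littlewood--Paley piece. You correctly identify the uniform bound $\|f_n\|_{L_1}\lesssim\|\mu\|_{\Me}$ as the only step needing care (note that although the Fourier-projection step is not uniformly bounded over arbitrarily wide spectral annuli, it converges in $L_1$ for each fixed truncation, so a diagonal choice yields the uniform bound), and the rest is routine.
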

The proof of Proposition~\ref{Density} hints a good way to define a space of~$\Omega$-subordinate distributions, which plays an important role in the subject. Such an object was introduced by Ayoush and Wojciechowski in~\cite{AyoushWojciechowski2017}.
\begin{Def}
Define the space~$\W$ by the rule
\eq{
\W = \Set{f\in \Sw'(\R^d,\Co^l)}{ \pi_{\Omega(\xi/|\xi|)^{\perp}}[\hat{f}]\cdot H = 0}.
}
\end{Def}
Here~$H$ is an auxiliary scalar Schwartz function that has deep zero at the origin (i.e., for any~$N\in \N$ the relation~$H(x) = O(|x|^{N})$ holds true when~$x\to 0$) and is positive outside it. The function~$H$ is used to make the formula correct, it kills the non-smoothness of the projection at the origin. Note that the space~$\W$ contains all polynomials. 
\begin{Rem}\label{WtildeRemark}
The space~$\W$ is dilation and translation invariant. It is closed as a subspace of~$\Sw'(\R^d,\Co^l)$. Note that the cancellation condition~\eqref{Cancel} is equivalent to the absence of the charges~$a\otimes \delta_0$\textup,~$a\in \Co^l \setminus \{0\}$\textup, in the space~$\W$. 
\end{Rem}

There will be no (or very little) Fourier transform in the forthcoming sections, so we replace~$\Co^l$ with~$\R^{2l}$. Let~$\ell = 2l$. We will never use that~$\ell$ is even. We a ready to formulate our main result.
\begin{Th}\label{Main}
Let~$\WW$ be a closed linear subspace of~$\Sw'(\R^d,\R^\ell)$ that is invariant under translations and dilations\textup, let~$\alpha \in (0,d)$. The constant in the inequality 
\eq{
\|\I_\alpha [f]\|_{\dot{B}_{\frac{d}{d-\alpha},1}^{0,1}}\lesssim \|f\|_{L_1},\qquad f\in\WW,
}
is uniform with respect to all~$f\in \WW$\textup, for which the right hand side is finite\textup, if and only if~$\WW$ does not contain the charges~$a\otimes \delta_0$\textup,~$a\in \R^\ell \setminus \{0\}$.
\end{Th}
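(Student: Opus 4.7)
If $a\otimes\delta_0 \in \WW$ for some $a\neq 0$, then $\I_\alpha[a\otimes\delta_0](x) = c_\alpha a \, |x|^{-(d-\alpha)}$. This kernel lies in $L_{p,\infty}\setminus L_{p,1}$ for $p=\frac{d}{d-\alpha}$, so its Besov--Lorentz seminorm is infinite, while the right-hand side $\|a\otimes\delta_0\|_{\Me}=|a|$ is finite. (Technically one should approximate $\delta_0$ by Schwartz functions in $\WW$ using Proposition~\ref{Density} adapted to $\WW$, and show the constants blow up.)

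\textbf{Sufficiency, setup.} I plan to expand the Besov--Lorentz seminorm via the Littlewood--Paley blocks furnished by $\psi_k-\psi_{k-1}$:
\eq{
\|\I_\alpha f\|_{\dot B^{0,1}_{p,1}} = \sum_{k\in\Z} \|M_k f\|_{L_{p,1}}, \qquad \widehat{M_k f}(\xi)=|\xi|^{-\alpha}\bigl(\hat\psi_k(\xi)-\hat\psi_{k-1}(\xi)\bigr)\hat f(\xi).
}
By scaling, $M_k$ is the convolution with $A^{k(d-\alpha)}g_0(A^k\cdot)$ for a Schwartz kernel $g_0$ of bandpass type. Elementary interpolation between the $L_1\to L_1$ bound $\|M_k\|\lesssim A^{-k\alpha}$ and the $L_1\to L_\infty$ bound $\|M_k\|\lesssim A^{k(d-\alpha)}$ yields $\|M_k f\|_{L_{p,1}}\lesssim\|f\|_{L_1}$ scale-by-scale --- constant, but not summable. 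The cancellation hypothesis on $\WW$ must be what closes the geometric series.

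\textbf{Key step: time-frequency refinement.} For each scale $k$, I would partition $\R^d$ into cubes $Q$ of side $\sim A^{-k}$, and split $\|M_k f\|_{L_{p,1}}\le \sum_Q \|M_kf\cdot\chi_Q\|_{L_{p,1}}$. On a single tile, $M_k f$ is essentially a smooth, frequency-localized object, so Harnack's inequality for the bandpass operator controls $\sup_Q |M_kf|$ by a local spatial average, reducing the contribution of tile $Q$ to a quantity of the form $A^{-k\alpha}A^{-kd/p}\cdot\Me_k(Q)$, where $\Me_k(Q)$ is a $k$-dependent "mass functional" measuring how $f$ concentrates near $Q$. Summing $\sum_Q$ on each scale recovers a global bound $\lesssim\|f\|_{L_1}$, so the task is to control $\sum_k \sum_Q$ as a whole.

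\textbf{Exploiting $\WW$ via quantitative cancellation.} The plan is to compare the double sum $\sum_{k,Q}\Me_k(Q)$ with the total variation of $f$ using a stopping-time/Calder\'on--Zygmund-like selection of tiles. Heuristically, a profile of $f\in\WW\cap L_1$ in which mass concentrates in arbitrarily small regions over arbitrarily many dyadic scales would, by a compactness/rescaling argument, produce a limit of unit-$L_1$ functions in $\WW$ whose mass concentrates at a single point --- forcing that limit to be $a\otimes\delta_0$, contradicting the hypothesis. Turning this into a uniform quantitative estimate is the central obstacle: one needs a genuinely effective dispersion inequality, valid for \emph{all} $f\in\WW\cap L_1$ simultaneously, saying that the "multi-scale concentration" of any such $f$ is controlled by $\|f\|_{L_1}$. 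Harnack's inequality should play the role of converting pointwise information at the center of a tile into uniform control throughout it, while the closedness and dilation/translation invariance of $\WW$ (combined with a normal-families argument on sequences of rescalings) should make the cancellation criterion robust.

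\textbf{Expected main obstacle.} The qualitative-to-quantitative step in the previous paragraph is the heart of the proof. Producing a scale-invariant, uniform-in-$f$ estimate that simultaneously absorbs all dyadic frequency bands --- rather than merely ruling out the limit case --- is where a careful time-frequency analysis combined with a compactness reduction on the "bad" profiles of $f$ must be deployed.
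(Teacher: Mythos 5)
Your necessity direction is correct and matches the paper's remark that $\I_\alpha[a\otimes\delta_0]\notin L_{\frac{d}{d-\alpha}}$. For sufficiency, however, what you have written is a plan rather than a proof, and you say so yourself: the ``qualitative-to-quantitative step'' you defer is precisely the entire content of the argument. Three concrete things are missing. First, the actual mechanism that converts ``$\WW$ contains no deltas'' into a summable gain. The paper does not use Harnack for bandpass operators (no usable convexity structure exists there); it replaces the Littlewood--Paley blocks by the heat semigroup, where Jensen's inequality gives $\|\Heat[g](\cdot,t)\|_{L_1(w)}\le\|g\|_{L_1(\Heat[w](\cdot,t))}$ with equality iff $g$ is a positive rank-one function, and where the Bennett--Carbery--Tao monotonicity formula makes $t^{\frac{d(p-1)}{2}}\int\Heat[\mu]^p\,\Heat[w]$ monotone with derivative a variance integral vanishing only for delta measures. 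Quantifying that equality case over an invariant cone of measures (via a concentration-compactness argument) yields the improved exponent $t^{-\frac{d}{2}\frac{p-1}{p}+\delta}$, which is the engine of the whole proof; nothing in your sketch produces such a $\delta>0$.

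Second, your ``stopping-time selection'' does not specify the dichotomy that actually makes the scales sum. The paper splits atoms into convex ones, absorbed for free by the telescoping identity $\sum_k(\|f_{k+3}\|_{L_1}-\|f_k\|_{L_1})\le 3\|f\|_{L_1}$ (non-negativity of each term being another consequence of the monotonicity), and flat ones, on which the improved monotonicity gives geometric decay $A^{(\alpha-\delta^{**})N}$ along trees of a vertical graph. Third, and most subtly, flatness alone implies nothing unless $f$ actually concentrates near the atom in question --- the compactness argument needs a concentration hypothesis --- and handling this forces the horizontal maximal functions, the saturated/non-saturated classification, and the forest combinatorics of Sections~6--7. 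None of these obstacles is addressed by your outline, so the proposal has a genuine gap exactly where you locate the ``heart of the proof.''
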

From now on let~$\WW$ be a closed linear subspace of~$\Sw'(\R^d,\R^\ell)$ that is invariant under translations and dilations. Seemingly, one may require closedness in a stronger topology (of course, not as strong as the~$*-$weak topology in~$\Me$), which will lead to milder smoothness assumptions on~$\Omega$ in Theorem~\ref{OurSobolevEmbedding}. The Schwartz class is chosen mostly for convenience.
\begin{Rem}\label{plessthantwo}
By the classical Besov embedding
\eq{
\I_{\beta - \alpha}\colon \dot{B}_{\frac{d}{d-\alpha},1}^{0,1} \to \dot{B}_{\frac{d}{d-\beta},1}^{0,1},\qquad \alpha < \beta \leq d,
}
which in our setting immediately follows from the definition~\eqref{BesovLorentzFormula}\textup, it suffices to consider the case~$\alpha < \frac{d}{2}$ \textup(equivalently\textup,~$p < 2$\textup) in Theorem~\textup{\ref{Main}}. What is more\textup, Theorem~\textup{\ref{Main}} allows the endpoint~$\alpha = d$ in the sense that if there are no charges~$a\otimes \delta_0$ in~$\WW$\textup, then
\eq{
\sum\limits_{k\in\Z} A^{-dk} \|f*(\psi_k - \psi_{k-1})\|_{L_{\infty}} \lesssim \|f\|_{L_1},\quad f \in \WW.
}
\end{Rem}

In the light of Remark~\ref{WtildeRemark}, Theorem~\ref{OurSobolevEmbedding} is a particular case of Theorem~\ref{Main}. Note that Theorem~\ref{Main} is stronger, for example, it implies the classical Hardy inequality (going back to~\cite{HardyLittlewood1927}):
\eq{
\Big(\int\limits_{\R_+} \frac{|\hat{f}(\xi)|^2}{|\xi|}\,d\xi\Big)^{\frac12} \lesssim \|f\|_{\Hardy_1};
}
here~$\Hardy_1$ is the analytic Hardy class on the line that consists of complex-valued summable functions having their Fourier transforms supported on the positive semiaxis. Theorem~\ref{Main} leads to yet another corollary in the spirit of Hardy's inequality; in the inequality below the symbol~$\sigma_r$ denotes the~$(d-1)$-Hausdorff measure on the sphere~$\{\zeta \in \R^d\mid|\zeta| = r\}$.
\begin{Cor}\label{HardyFourierCor}
Assume~$d \geq 2$. Let~$\WW$ be a closed linear subspace of~$\Sw'(\R^d,\R^\ell)$ that is invariant under translations and dilations and does not contain the charges~$a\otimes \delta_0$\textup,~$a\in \R^\ell \setminus \{0\}$. The inequality
\eq{\label{HardyFourier}
\sum\limits_{k\in\Z} A^{(1-d)k}\!\!\!\!\sup\limits_{r\in [A^k,A^{k+1})} \int\limits_{|\zeta| = r} |\hat{f}(\zeta)|\,d\sigma_r(\zeta) \lesssim \|f\|_{L_1},
}
holds true for any~$f\in\WW\cap L_1$. 
\end{Cor}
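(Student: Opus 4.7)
The plan is to derive the inequality from Theorem~\ref{Main} applied at a carefully chosen intermediate Riesz exponent. Setting $\alpha_0 := \frac{d(d-1)}{2(d+1)}$ and $p_0 := \frac{d}{d-\alpha_0} = \frac{2(d+1)}{d+3}$, Theorem~\ref{Main} yields
\[
\sum_{k\in\Z}\|\Delta_k \I_{\alpha_0} f\|_{L_{p_0,1}} \lesssim \|f\|_{L_1},
\]
where $\Delta_k f := f*(\psi_k - \psi_{k-1})$ is the Littlewood--Paley block from~\eqref{BesovLorentzFormula}; note that $p_0 > 1$ precisely because $d \geq 2$.

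For fixed $r \in [A^k, A^{k+1})$ and $\varphi$ on $\{|\xi|=r\}$ with $|\varphi|\leq 1$, Parseval reduces the sphere integral to a pairing in physical space:
\[
\int_{|\xi|=r}\hat f(\xi)\bar\varphi(\xi)\,d\sigma_r(\xi) = \int_{\R^d} f(x)\,\widehat{\bar\varphi\,\sigma_r}(x)\,dx.
\]
Only $O(1)$ Littlewood--Paley blocks $\Delta_j$ with $j\sim k$ contribute to $\hat f$ on the sphere of radius $r$, and on that sphere $\widehat{\Delta_j f}(\xi) = r^{\alpha_0}\widehat{\Delta_j \I_{\alpha_0} f}(\xi)$. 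H\"older's inequality in Lorentz spaces then gives
\[
\int_{|\xi|=r}|\hat f|\,d\sigma_r \lesssim r^{\alpha_0}\sum_{j\sim k}\|\Delta_j\I_{\alpha_0} f\|_{L_{p_0,1}} \cdot \sup_{|\varphi|\leq 1}\|\widehat{\bar\varphi\,\sigma_r}\|_{L_{p_0',\infty}},
\]
where $p_0' = \frac{2(d+1)}{d-1}$ is the H\"older conjugate of $p_0$ and also the Stein--Tomas exponent for the sphere.

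The remaining factor is controlled by the Stein--Tomas extension estimate: a standard scaling of the unit-sphere Tomas bound together with $\|\varphi\|_{L_2(\sigma_r)} \lesssim r^{(d-1)/2}\|\varphi\|_{L_\infty}$ yields $\|\widehat{\bar\varphi\,\sigma_r}\|_{L_{p_0'}} \lesssim r^{(d-1)(d+2)/(2(d+1))}$, and the embedding $L_{p_0'}\hookrightarrow L_{p_0',\infty}$ transfers this into the required Lorentz norm. The arithmetic identity $\alpha_0 + \tfrac{(d-1)(d+2)}{2(d+1)} = d-1$ collapses the combined $r$-power into $r^{d-1}$, which for $r\sim A^k$ is exactly absorbed by the weight $A^{(1-d)k}$. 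Summing over $k$ recovers the Besov--Lorentz norm from the first step, which is $\lesssim \|f\|_{L_1}$.

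The main obstacle is the combinatorial calibration of $(\alpha_0, p_0)$: the Riesz rescaling contributes $r^{\alpha_0}$, the Fourier decay of spherical measures contributes $r^{(d-1)(d+2)/(2(d+1))}$, and only when these add to exactly $d-1$ does the weight $A^{(1-d)k}$ absorb them. Invoking Stein--Tomas (or at any rate some sub-critical Fourier decay of spherical measures) is genuinely necessary, since the trivial bound $\|\widehat{\varphi\sigma_r}\|_{L_\infty} \lesssim r^{d-1}$ alone produces a divergent geometric sum in $k$.
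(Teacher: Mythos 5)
Your proof is correct and follows essentially the same route as the paper: both reduce the sphere integrals to Theorem~\ref{Main} via the Tomas--Stein restriction theorem for spheres, the powers of $r$ matching automatically because of the scaling relation $p=\frac{d}{d-\alpha}$. The only differences are cosmetic: the paper takes $\alpha$ small (so that $\frac{d}{d-\alpha}$ is close to $1$, comfortably inside the Tomas--Stein range) and phrases the estimate in restriction rather than extension form, whereas you work at the endpoint exponent $p_0=\frac{2(d+1)}{d+3}$.
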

\begin{proof}
It suffices to prove the estimate
\eq{
A^{(1-d)k}\!\!\!\!\sup\limits_{r\in [A^{k-1},A^{k})} \int\limits_{|\zeta| = r} |\hat{f}(\zeta)|\,d\sigma_r(\zeta) \lesssim  A^{-\alpha k}\|f*(\psi_k - \psi_{k-1})\|_{L_{\frac{d}{d-\alpha}}},
}
where~$\alpha$ is a positive sufficiently small auxiliary parameter; then~\eqref{HardyFourier} will follow from Theorem~\ref{Main}. By dilation invariance, we may consider the case~$k=0$ only. Let us assume~$\hat{\psi}_0 - \hat{\psi}_{-1}$ is non-zero on the region where~$|\zeta|\in [A^{-1},1)$ (we may choose the function~$\psi$ in the definition of Besov space that satisfies this assumption). Thus, it suffices to show
\eq{
\|\hat{g}\|_{L_1(S_{r}(0))} \lesssim \|g\|_{L_{\frac{d}{d-\alpha}}}; \qquad S_r(0) = \Set{\zeta \in \R^d}{|\zeta| = r},\ r \in [A^{-1},1).
}
Since~$\alpha$ is close to zero, the summability exponent~$\frac{d}{d-\alpha}$ is close to one, and the desired inequality is a consequence of the Tomas--Stein theorem.
\end{proof}
The above corollary, in particular, leads to the inequality
\eq{
\int\limits_{\R^d}\frac{|\hat{f}(\xi)|}{|\xi|^{d-1}}\,d\xi \lesssim \|f\|_{\dot{W}_1^1}.
}
The latter inequality was first proved by Bourgain in an unpublished preprint~\cite{Bourgain1981}, see~\cite{PelczynskiWojciechowski1993} as well. For the case of the first gradient (as in Example~\ref{GagliardoNirenbergExample}), Corollary~\ref{HardyFourierCor} was proved by Kolyada in~\cite{Kolyada2019} for~$d \geq 3$ (the case~$d=2$ was open even in the case of the first gradient).

Before we pass to the proof of Theorem~\ref{Main}, we give a short description of the idea. The paper~\cite{ASW2018} suggested a discrete model for the problems mentioned above: the spaces~$\BV^\Omega$ have relatives in the world of discrete time martingales over regular filtrations. In the discrete model, the problems are simpler, and the paper~\cite{ASW2018} contains solutions to them. The approach is based upon four ingredients: the monotonicity formula (in the world of discrete martingales it reduces to a simple form of convexity in~$L_p$), the splitting into convex and flat atoms, an improvement of the monotonicity formula in the case the corresponding cancellation condition holds true, and a combinatorial argument. 

Our plan is to transfer the approach of~\cite{ASW2018} to the Euclidean setting, finding appropriate translations for the notions of a martingale, an atom, the monotonicity formula, and other objects of that paper. The translation appeared to be not quite literal, so there will be several new essences (the main are the horizontal graphs in Section~\ref{S6} below, there was no horizontal interaction in the discrete world; this also forces us to introduce another classification of atoms: there will be saturated and non-saturated atoms). However, the paper~\cite{ASW2018} might serve as a reading guide to the present text.

Section~\ref{S2} contains our interpretation of the words "martingale" and "monotonicity formula". The first notion is replaced with the heat extension; more specifically, we will consider discrete time martingale~$\{\Heat[f](\fdot,A^{-2k})\}_k$, where~$\Heat[f] = \Heat[f](x,t)$ is the heat extension of~$f$ and~$A$ is an extremely large number specified in Section~\ref{S7} (we will almost avoid the probabilistic terminology during the proof, however, the probabilistic point of view seems to be quite intuitive here). As for the monotonicity formula, we will be using a very particular case of a much more general monotonicity formula obtained by Bennett--Carbery--Tao in~\cite{BCT2006} (we will need to generalize this simple case to fit a weighted setting). The monotonicity formula is stated in Proposition~\ref{Karamata}.

Section~\ref{S3} provides an improvement of the Bennett--Carbery--Tao monotonicity formula for rank-one measures in~$\WW$ when the latter space does not contain delta measures. The idea is that the inequality expressing the monotonicity formula turns into equality only when~$f$ is a delta measure. Theorem~\ref{Robust} says that the measures~$\mu$ such that~$a\otimes \mu \in \WW$ are somehow separated from delta-measures, and thus, one may improve the monotonicity formula in Proposition~\ref{Karamata} for these measures~$\mu$. The separation is expressed through the notion of an invariant cone of measures from~\cite{Preiss1987} (we adjust this notion to fit our Schwartz class approach). Though we do not use the notion of tangent cone or tangent measure, the material of this section is reminiscent of some parts of the paper~\cite{Preiss1987}.

Section~\ref{S4} contains our interpretation of the notion "atom". We use a version of a time-frequency decomposition. However, we do not decompose the function over the space, but rather consider its norms in weighted~$L_1$-spaces, where a weight is localized in a neighborhood of an atom. The Uncertainty Principle says that the function~$f_k = \Heat[f](\fdot, A^{-2k})$ behaves like a function on the lattice~$A^{-k}\Z^d$. We exploit this principle by considering the values~$\|f_k\|_{L_1(w_{k,j})}$, where the weight~$w_{k,j}$ is concentrated in a neighborhood of the point~$A^{-k}j$,~$j \in\Z^d$; the quantity~$\|f_k\|_{L_1(w_{k,j})}$ is then treated as the value of the martingale~$f$ on the atom~$(k,j)$. We define convex and flat atoms similar to~\cite{ASW2018} and prove several useful lemmas about our weights. This allows us to estimate the sum over convex atoms in the manner similar to~\cite{ASW2018}; this estimate is proved in Proposition~\ref{ConvexControl}.

Section~\ref{S5} suggests a compactness argument that allows to perturb the improvement of the Bennett--Carbery--Tao monotonicity formula obtained in Section~\ref{S3}, i.e. to prove a similar monotonicity formula for functions~$f \in \WW$ that are somehow close to rank-one measures; the precise formulation is in Theorem~\ref{Compactness}. In~\cite{ASW2018}, it was shown that the growth of the~$L_p$-norm of a martingale on a flat atom is smaller than the growth of the same quantity for the martingale generated by a delta measure. In the Euclidean case, the situation is more complicated since we do not have perfect localization in the space variable (due to the Uncertainty Principle). So, we introduce an additional concentration assumption. We prove that if this assumption holds for some atom, and the atom is flat, then the function~$f$ is close to being a positive rank-one measure, and its~$L_p$ norm in a certain weighted space grows slower than that of a delta measure. 

The graphs in our reasonings will indicate subordination of atoms: if there is an arrow from~$\mathfrak{A}$ to~$\mathfrak{B}$, then some quantity of~$\mathfrak{B}$ may be estimated by a similar quantity of~$\mathfrak{A}$ uniformly.  Section~\ref{S6} introduces the graphs that mark the horizontal subordination of atoms. We study simple combinatorial properties of these graphs and introduce the second classification of atoms (the first being flat/convex). The atoms may be saturated and non-saturated. For saturated atoms, we prove good bounds for the growth of the~$L_p$-norm (this follows from Theorem~\ref{Compactness} since saturated atoms fulfill the concentration condition in that theorem). What is more, if a non-saturated atom is subordinate to a saturated one, then there is a certain control of the growth of the~$L_p$-norm as the weight drifts from the former atom to the latter one; the precise formulation is in Theorem~\ref{InductionStep}. 

The final Section~\ref{S7} concludes the proof. We introduce the graph~$\Gamma$ similar to the graph in~\cite{ASW2018}. Here, its structure is more complicated, it is not uniform. In fact, there might be vertices with infinitely many kids. However,~$\Gamma$ is still a forest, i.e. a union of trees. The improved monotonicity formula allows to run induction over an individual tree (Proposition~\ref{LebesgueInd}). A combinatorial argument then leads to the estimate of the sum over flat atoms by the sum over convex atoms similar to~\cite{ASW2018}.

\medskip

I would like to express my gratitude to Rami Ayoush and Michal Wojciechowski for long and fruitful collaboration and sharing their ideas with me.  
I also wish to thank Daniel Spector for discussions concerning this work, which, in particular lead to finding a mistake in an early version of the paper. 

 
\section{Gaussians and monotonicity formulas}\label{S2}
Consider the heat extension of a function~$f\in L_1(\R^d,\R^\ell)$, that is
\eq{
\Heat[f](x,t) = (4\pi t)^{-\frac{d}{2}}\int\limits_{\R^d}f(y)e^{-\frac{|x-y|^2}{4t}}\,dy,\qquad x\in \R^d,\ t > 0.
}
One may extend this definition to the case~$f\in \Sw'(\R^d,\R^\ell)$ in the usual way. This extension satisfies the heat equation
\eq{
(\Heat[f])_t = \Delta_x \Heat[f]
}
and the semigroup property
\eq{\label{Semigroup}
\Heat\big[\Heat[f](\fdot,s)\big](x,t) = \Heat[f](x,t+s),\qquad t,s > 0,\ x\in\R^d.
}
What is more, the operator~$f\mapsto \Heat[f](\fdot,t)$ is a Fourier multiplier with the symbol~$e^{-4\pi^2 t|\xi|^2}$, that is
\eq{
\mathcal{F}\Big[\Heat[f](\fdot,t)\Big](\xi) = e^{-4\pi^2 t|\xi|^2}\hat{f}(\xi).
}
Therefore,~$\Heat[f](\fdot,t) \in \WW$ for any~$t > 0$ provided~$f\in \WW$.

Let~$A$ be a large natural parameter to be chosen later. It will be convenient for further considerations to assume~$A$ is odd. 
We will use the notation
\eq{\label{RelationsParameters}
p = \frac{d}{d-\alpha},\quad \alpha = d\,\frac{p-1}{p},
}
which links the parameters in Theorem~\ref{Main}. By~$p'$ we mean the conjugate exponent~$p' = \frac{p}{p-1}$. 
\begin{Rem}
Theorem~\textup{\ref{Main}} follows from the inequality
\eq{\label{Besov1}
\sum\limits_{k\in\Z} A^{-\alpha k}\|\Heat[f](\fdot, A^{-2k})\|_{L_{p,1}} \lesssim \|f\|_{L_1},\qquad f\in \WW,
}
where the parameter~$A$ is our choice \textup(any suffices\textup) and the parameters~$\alpha$ and~$p$ satisfy~\eqref{RelationsParameters}. Indeed\textup, the inequality
\eq{
\|\I_{\alpha}[f]*(\psi_k - \psi_{k-1})\|_{L_{p,1}} \lesssim A^{-\alpha k} \|\Heat[f](\fdot, A^{-2k})\|_{L_{p,1}}
}
follows from the fact that the Fourier transform of the function
\eq{
\frac{\hat{\psi}(A^{-k}\xi) - \hat{\psi}(A^{-k+1}\xi)}{A^{-\alpha k}|\xi|^\alpha e^{-4\pi^2 A^{-2k}|\xi|^2}}
}
has uniformly bounded \textup(with respect to~$k$\textup)~$L_1$-norm\textup; by~\eqref{BesovLorentzFormula}\textup, summation of these inequalities for all~$k\in\Z$ leads to
\eq{
\|\I_{\alpha}[f]\|_{\dot{B}_{p,1}^{0,1}} \lesssim \sum\limits_{k\in\Z} A^{-\alpha k}\|\Heat[f](\fdot, A^{-2k})\|_{L_{p,1}}.
}
\end{Rem}
\begin{Rem}\label{ALesOne}
Using dilation invariance of the problem\textup, one may reduce~\eqref{Besov1} to
\eq{
\sum\limits_{k\geq 0} A^{-\alpha k}\|\Heat[f](\fdot, A^{-2k})\|_{L_{p,1}} \lesssim \|f\|_{L_1},\qquad f\in\WW,
}
or\textup, with the notation~$f_k(x) = \Heat[f](x, A^{-2k})$\textup,
\eq{\label{Besov2}
\sum\limits_{k\geq 0} A^{-\alpha k}\|f_k\|_{L_{p,1}} \lesssim \|f\|_{L_1},\qquad f\in\WW.
}
\end{Rem}

\begin{Def}
By a weight we mean a locally summable almost everywhere non-negative function~$w$ that defines a tempered distribution \textup(i.e. there exists~$M \in \N$ such that $\int_{B_R(0)} w(x)\,dx \lesssim R^M$ for all~$R > 1$\textup; here and in what follows\textup,~$B_r(x)$ stands for the closed Euclidean ball of radius~$r$ centered at~$x$\textup). We define the~$L_p(w)$ norm as
\eq{
\|f\|_{L_p(w)} = \Big(\int\limits_{\R^d}|f(x)|^pw(x)\,dx\Big)^\frac1p,
}
where the function~$f$ might be vector-valued.
\end{Def}
\begin{Le}\label{FirstMonotonicityLem}
Let~$w$ be a weight\textup, let~$g\in L_{1,\loc}\cap \Sw'(\R^d,\R^\ell)$ be a function\textup, and let~$p \geq 1$. Then\textup, for any~$t > 0$,
\eq{\label{FirstMonotonicity}
\|\Heat[g](\fdot,t)\|_{L_p(w)} \leq \|g\|_{L_p(\Heat[w](\fdot,t))},
}
provided the right hand side is finite.
\end{Le}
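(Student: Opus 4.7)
The plan is to exploit the fact that, for each fixed $t>0$, the Gaussian kernel
\[
K_t(z) := (4\pi t)^{-d/2}\exp\Big(-\tfrac{|z|^2}{4t}\Big)
\]
is a probability density on~$\R^d$, so that~$\Heat[g](x,t)=\int g(y)K_t(x-y)\,dy$ is literally an average of~$g$ against a probability measure. The inequality then becomes a direct Jensen/Fubini computation, with the symmetry $K_t(x-y)=K_t(y-x)$ producing the characteristic "weight on the outside $\leftrightarrow$ heat-extended weight on the inside" duality.

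First, I would apply the triangle inequality for the Euclidean norm on~$\R^\ell$ followed by Jensen's inequality for the convex function $s\mapsto s^p$ (recall $p\geq 1$) against the probability measure $K_t(x-y)\,dy$, yielding the pointwise bound
\[
|\Heat[g](x,t)|^p \;\leq\; \Big(\int_{\R^d}|g(y)|K_t(x-y)\,dy\Big)^{p} \;\leq\; \int_{\R^d}|g(y)|^p K_t(x-y)\,dy
\]
for every~$x$ at which the right hand side is finite. Next, multiply by $w(x)\geq 0$, integrate in~$x$, and swap the order of integration (Tonelli applies since everything is non-negative):
\[
\int_{\R^d}|\Heat[g](x,t)|^p w(x)\,dx \;\leq\; \int_{\R^d}|g(y)|^p\Big(\int_{\R^d}K_t(x-y)w(x)\,dx\Big)dy \;=\; \int_{\R^d}|g(y)|^p\Heat[w](y,t)\,dy,
\]
where the inner integral is identified with $\Heat[w](y,t)$ using $K_t(x-y)=K_t(y-x)$. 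Taking $p$-th roots finishes the estimate.

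The only point that requires minor attention is the justification that the distributional object $\Heat[g](\fdot,t)$ really is given pointwise by the convolution formula used above. Since~$g\in L_{1,\loc}\cap\Sw'$, the convolution $g*K_t$ is defined as a tempered distribution by the multiplier symbol $e^{-4\pi^2 t|\xi|^2}$; finiteness of the right hand side of \eqref{FirstMonotonicity} forces $|g(y)|K_t(x-y)$ to be integrable in~$y$ for almost every~$x$, so the pointwise and distributional definitions of~$\Heat[g](\fdot,t)$ coincide almost everywhere. I do not expect any real obstacle here: the content of the lemma is precisely Jensen plus Fubini plus the symmetry of the Gaussian, and the weighted formulation is what makes this elementary identity useful in the sequel.
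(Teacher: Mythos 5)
Your proposal is correct and coincides with the paper's own argument: the paper's proof is exactly the chain $\int|\Heat[g]|^pw \leq \int\Heat[|g|^p]w = \int|g|^p\Heat[w]$, i.e. Jensen against the Gaussian probability measure followed by Fubini and the symmetry of the kernel. Your additional remarks on the pointwise versus distributional definition of the heat extension are a harmless elaboration of the same route.
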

\begin{proof}
We raise the inequality to the power~$p$, use Jensen's inequality, and the Fubini theorem:
\eq{\label{MonotonicityProof}
\int\limits_{\R^d} |\Heat[g](x,t)|^p w(x)\,dx \leq \int\limits_{\R^d}\Heat[|g|^p](x,t)w(x)\,dx = \int\limits_{\R^d}|g|^p(x)\Heat[w](x,t)\,dx.
}
\end{proof}
\begin{Cor}\label{UnweightedMonotonicityCor}
For any~$m \geq k$\textup, 
\eq{
\|f_k\|_{L_p}\leq \|f_m\|_{L_p},\quad m \geq k,
}
provided the right hand side is finite.
\end{Cor}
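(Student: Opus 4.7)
The plan is to reduce the statement to a single application of Lemma~\ref{FirstMonotonicityLem} with the constant weight $w \equiv 1$, after bridging $f_k$ and $f_m$ via the semigroup property~\eqref{Semigroup}.

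Concretely, since $m \geq k$ we have $A^{-2k} - A^{-2m} \geq 0$, so the semigroup identity~\eqref{Semigroup} gives
\eq{
f_k(x) = \Heat[f](x,A^{-2k}) = \Heat\bigl[\Heat[f](\fdot,A^{-2m})\bigr](x,A^{-2k}-A^{-2m}) = \Heat[f_m](x,s),
}
where $s := A^{-2k} - A^{-2m} \geq 0$. If $m = k$ then $f_k = f_m$ and there is nothing to prove; otherwise $s > 0$, and we invoke Lemma~\ref{FirstMonotonicityLem} with $g := f_m$, weight $w \equiv 1$, and time $t := s$:
\eq{
\|f_k\|_{L_p} = \|\Heat[f_m](\fdot,s)\|_{L_p(1)} \leq \|f_m\|_{L_p(\Heat[1](\fdot,s))}.
}

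It then remains only to verify two trivial points: that $w\equiv 1$ qualifies as a weight in the sense of the definition (obvious: $\int_{B_R(0)}1\,dx = c_d R^d$), and that $\Heat[1] \equiv 1$, which is immediate from the fact that the Gaussian heat kernel is a probability density. Substituting $\Heat[1]\equiv 1$ into the right-hand side turns it into $\|f_m\|_{L_p}$, finishing the argument. There is no real obstacle here; the corollary is a direct specialization of the preceding lemma once the semigroup structure is used to interpret $f_k$ as a heat extension of $f_m$.
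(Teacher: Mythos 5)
Your proposal is correct and coincides with the paper's own argument: both use the semigroup property to write $f_k = \Heat[f_m](\fdot, A^{-2k}-A^{-2m})$ and then apply Lemma~\ref{FirstMonotonicityLem} with the constant weight $w\equiv 1$, noting $\Heat[1]\equiv 1$. Nothing is missing.
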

\begin{proof}
By the semigroup property~\eqref{Semigroup} of the heat extension,
\eq{\label{fkfm}
f_k(x) = \Heat[f_m](x,A^{-2k} - A^{-2m}),
}
so the desired inequality indeed follows from Lemma~\ref{FirstMonotonicityLem} with~$w = 1$, since~$\Heat[1](x,t) = 1$ for any~$x$ and~$t$.
\end{proof}
\begin{Le}\label{JensenEquality}
Let~$p=1$. If~\eqref{FirstMonotonicity} turns into equality for some~$t > 0$\textup, then~$g = a\otimes h$\textup, where~$a\in \R^\ell$ and~$h$ is a non-negative scalar-valued function.
\end{Le}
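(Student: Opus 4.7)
The plan is to revisit the chain~\eqref{MonotonicityProof} from the proof of Lemma~\ref{FirstMonotonicityLem} and locate where equality must hold. At $p=1$ this chain reads
\[
\int_{\R^d}|\Heat[g](x,t)|\,w(x)\,dx \leq \int_{\R^d}\Heat[|g|](x,t)\,w(x)\,dx = \int_{\R^d}|g(y)|\,\Heat[w](y,t)\,dy,
\]
the first inequality being the pointwise vector-valued triangle inequality $|\Heat[g](x,t)|\leq \Heat[|g|](x,t)$ (this is what Jensen's inequality degenerates to at $p=1$), and the second being Fubini, valid since all integrands are non-negative. If \eqref{FirstMonotonicity} is an equality and $w \geq 0$, these facts force the pointwise identity
\[
|\Heat[g](x,t)| = \Heat[|g|](x,t) \quad \text{for } w\text{-a.e. } x.
\]

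Next I would pick any point $x_0$ in the support of $w$ and read off the consequence at $x_0$. Writing $K_t(z)=(4\pi t)^{-d/2}e^{-|z|^2/(4t)}$ for the Gauss kernel, the equality at $x_0$ is
\[
\Bigl|\int_{\R^d}g(y)K_t(x_0-y)\,dy\Bigr| = \int_{\R^d}|g(y)|K_t(x_0-y)\,dy,
\]
i.e.\ the equality case of the triangle inequality in $\R^\ell$ for a strictly positive density. The standard computation --- let $a\in S^{\ell-1}$ point along $\int g(y)K_t(x_0-y)\,dy$ and take the inner product of both sides with $a$ --- yields $\langle g(y),a\rangle = |g(y)|$ for Lebesgue-a.e. $y$, using that $K_t>0$ everywhere. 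Consequently $g(y) = a\,|g(y)|$ a.e., which is precisely the required decomposition $g = a\otimes h$ with $h = |g| \geq 0$.

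The only point requiring a brief comment is the treatment of degenerate configurations. If $w \equiv 0$ or if both sides of \eqref{FirstMonotonicity} are infinite, the hypothesis is vacuous. If $\int g(y) K_t(x_0-y)\,dy = 0$ at the chosen $x_0$, then equality at $x_0$ forces $\int |g(y)| K_t(x_0-y)\,dy = 0$, whence (by positivity of $K_t$) $g \equiv 0$ a.e., a trivial case in which any $a$ and $h\equiv 0$ work. I do not anticipate any deeper obstacle: the lemma is essentially the equality case of the triangle inequality for vector-valued integrals with a strictly positive kernel, and the transition from "equality in the weighted integrated form" to "equality at a single $x_0$" is immediate from the non-negativity of $w$.
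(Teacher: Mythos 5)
Your proof is correct and follows essentially the same route as the paper: equality in \eqref{FirstMonotonicity} forces the pointwise identity $|\Heat[g](x,t)|=\Heat[|g|](x,t)$ at some point, which is the equality case of the triangle inequality for a vector-valued integral against a strictly positive Gaussian kernel, and the inner-product (equivalently, projection onto $a$) argument then gives $g=a\otimes|g|$. Your explicit treatment of the degenerate cases and of which $x_0$ to evaluate at is slightly more careful than the paper's "plug in $x=0$", but the substance is identical.
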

\begin{proof}
It follows from formula~\eqref{MonotonicityProof} that
\eq{\label{UsefulFormula}
 \|g\|_{L_1(\Heat[w](\fdot,t))} - \|\Heat[g](\fdot,t)\|_{L_1(w)} = \int\limits_{\R^d}\Big(\Heat[|g|](x,t) - \big|\Heat[g](x,t)\big|\Big)w(x)\,dx.
}
Therefore, if~\eqref{FirstMonotonicity} turns into equality, then~$\Heat[|g|](x,t) = |\Heat[g](x,t)|$ for all~$x \in \R^d$. Plugging~$x= 0$, we get 
\eq{
\int |g|\,d\mu = \Big|\int g\,d\mu\Big|,
}
where~$\mu$ is a Lebesgue continuous measure with Gaussian density. Let~$a = \int g\,d\mu$. Recall that~$\pi_a$ denotes the orthogonal projection onto the vector~$a$. We write the chain of inequalities
\eq{
\int |g|\,d\mu \geq \int|\pi_a[g]|\,d\mu \geq \Big|\int \pi_a[g]\,d\mu\Big| = |\pi_a[a]| = |a|.
}
Both inequalities in this chain turn into equalities. Since we have~$|\pi_a[g]| = |g|$ almost everywhere, $g(x)$ is proportional to~$a$ for almost all~$x$. This is equivalent to~$g = a\otimes h$, where~$h$ is a scalar function. Then, since the second inequality in the chain turns into equality,~$h \geq 0$ almost everywhere. 
\end{proof}

\begin{St}\label{Karamata}
Let~$\mu$ be a non-negative scalar measure of tempered growth\textup, let~$w$ be a continuous weight. Then\textup,
\eq{\label{KaramataFormula}
\Big\|\Heat[\mu](\fdot,t)\Big\|_{L_p(\Heat[w](\fdot,\frac{1-t}{p}))} \leq t^{-\frac{d}{2}\frac{p-1}{p}} \|\Heat[\mu](\fdot,1)\|_{L_p(w)},\qquad t \in (0,1],
}
provided the right hand side is finite.
\end{St}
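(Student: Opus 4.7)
The plan is to prove this weighted Gaussian monotonicity by a tensorization argument that reduces both sides of~\eqref{KaramataFormula} to the same explicit integral against~$d\mu^{\otimes p}$ on~$(\R^d)^p$; the inequality then collapses to a trivial pointwise Gaussian bound. Assume first that~$p$ is a positive integer. Using Fubini, one expands
\[
\Heat[\mu](x,t)^p = \int_{(\R^d)^p}\prod_{i=1}^p K_t(x-y_i)\,d\mu(y_1)\cdots d\mu(y_p),
\]
where~$K_t$ is the heat kernel. Completing the square via $\sum_i|x-y_i|^2 = p|x-\bar y|^2 + Q(y)$, with centroid $\bar y := p^{-1}\sum_i y_i$ and dispersion $Q(y):=\sum_i|y_i-\bar y|^2 \geq 0$, factors the integrand as
\[
\prod_{i=1}^p K_t(x-y_i) = c_{p,d}\,t^{-d(p-1)/2}\,K_{t/p}(x-\bar y)\,e^{-Q(y)/(4t)},\qquad c_{p,d} := (4\pi)^{-d(p-1)/2}p^{-d/2}.
\]

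The next step is to integrate in~$x$ against~$\Heat[w](x,(1-t)/p)$; by the semigroup property~\eqref{Semigroup}, the specific choice of time $(1-t)/p$ is exactly what is required to collapse the inner $x$-integral to~$\Heat[w](\bar y, 1/p)$, which is independent of~$t$. Thus the $p$th power of the left-hand side of~\eqref{KaramataFormula} equals
\[
c_{p,d}\,t^{-d(p-1)/2}\int_{(\R^d)^p} e^{-Q(y)/(4t)}\,\Heat[w](\bar y,1/p)\,d\mu^{\otimes p}.
\]
Repeating the calculation for the right-hand side (now integrating $\prod_i K_1(x-y_i)$ against $w(x)$) gives $\|\Heat[\mu](\fdot,1)\|_{L_p(w)}^p = c_{p,d}\int e^{-Q(y)/4}\,\Heat[w](\bar y,1/p)\,d\mu^{\otimes p}$. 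After multiplying by~$t^{-d(p-1)/2}$ to match~\eqref{KaramataFormula}, the inequality reduces to the pointwise bound $e^{-Q(y)/(4t)} \leq e^{-Q(y)/4}$, valid for~$t\in(0,1]$ since $Q \geq 0$; Dirac masses $\mu = \delta_y$ give $Q \equiv 0$ and thus saturate the bound.

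For general real $p\geq 1$ the tensorization above is formally unavailable, and I would close the argument by verifying that $\Psi(t):=t^{d(p-1)/2}\int\Heat[\mu]^p\,\Heat[w](\fdot,(1-t)/p)\,dx$ is non-decreasing on~$(0,1]$. Differentiating, using the subsolution identity $(\partial_t-\Delta)\Heat[\mu]^p = -p(p-1)\Heat[\mu]^{p-2}|\nabla\Heat[\mu]|^2 \leq 0$ (valid for all real~$p\geq 1$ when $\Heat[\mu]\geq 0$), and integrating by parts against~$\Heat[w]$, one reduces $\Psi'(t)\geq 0$ to an integrated gradient estimate in which the Cauchy--Schwarz representation $\nabla\Heat[\mu](x,t)=-\int (x-y)(2t)^{-1}K_t(x-y)\,d\mu(y)$ plays the role of the pointwise Dirac comparison from the integer case. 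The main obstacle will be arranging the cross term involving $\nabla\Heat[\mu]\cdot\nabla\Heat[w]$ into a manifestly non-negative sum of squares plus a quantity controlled by~$\tfrac{d}{2t}\Psi(t)$; boundary terms are handled by approximating~$w$ with smooth compactly supported weights and passing to the limit via monotone convergence.
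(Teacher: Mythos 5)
Your tensorization argument for integer $p$ is correct and complete, and it is a genuinely different route from the paper: the paper never expands $\Heat[\mu]^p$ over $(\R^d)^p$, but instead differentiates $Q_p(t)=t^{d(p-1)/2}\int \Heat[\mu]^p(\fdot,t)\,\Heat[w](\fdot,\frac{1-t}{p})$ and proves the exact identity \eqref{BCT}, whose right-hand side is a dispersion and hence non-negative. Your computation makes the role of the time $\frac{1-t}{p}$ and the saturation by Dirac masses completely transparent, and for integer $p$ it is arguably cleaner than the differential identity.

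The gap is that the proposition is needed precisely for non-integer $p$: in the application $p=\frac{d}{d-\alpha}$ with $\alpha\in(0,d)$, and the paper even reduces to $p<2$, so $p\in(1,2)$ is the essential range. For general real $p$ your proposal is only a plan, and you explicitly leave the "main obstacle" (a cross term $\nabla\Heat[\mu]\cdot\nabla\Heat[w]$) unresolved, so as written this is not a proof of the statement. In fact the obstacle you anticipate does not exist: because $v(x,t)=\Heat[w](x,\frac{1-t}{p})$ solves $v_t=-\frac1p\Delta_x v$, the two cross terms produced by integrating by parts in $\int pu^{p-1}v\Delta u$ and in $-\frac1p\int u^p\Delta v$ cancel identically, and one is left (after dividing by $p-1>0$) with showing
\begin{equation*}
\int\limits_{\R^d}\Big(\tfrac{d}{2t}\,u^p+u^{p-1}u_t-u^{p-2}|\nabla_x u|^2\Big)v\,dx\;\geq\;0 .
\end{equation*}
This does \emph{not} follow from the subsolution inequality $(\partial_t-\Delta)u^p\le 0$ alone, since the term $-u^{p-2}|\nabla u|^2v$ has the wrong sign and must be absorbed; what is needed is the pointwise bound $|\nabla_x u|^2\le u\big(u_t+\tfrac{d}{2t}u\big)$, which is exactly Cauchy--Schwarz for the measure $e^{-|x-y|^2/4t}\,d\mu(y)$ (equivalently, $\Disp(x-Y_x)\ge 0$ in the paper's notation) and uses $\mu\ge 0$. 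You gesture at this with the formula for $\nabla\Heat[\mu]$ but never state or prove the inequality, so the real $p$ case remains open in your write-up. Either carry out this computation (it is the content of the paper's proof of \eqref{BCT}), or find a legitimate way to pass from integer to real $p$ --- note that naive interpolation in $p$ is delicate here because the weight $\Heat[w](\fdot,\frac{1-t}{p})$ itself depends on $p$.
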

The unweighted case of this proposition was proved in~\cite{BCT2006} as a particular case of a more general monotonicity formula (see Proposition $5$ in the blog post~\cite{TaoBlogPost2019}). Till the end of this section, we will be using the notation
\mlt{\label{QpDef}
u(x,t) = \Heat[\mu](x,t),\quad v(x,t) = \Heat[w]\Big(x,\frac{1-t}{p}\Big), \\ Q_p[\mu,w](t) = t^{\frac{d(p-1)}{2}}\int\limits_{\R^d} u^p(x,t)v(x,t)\,dx, \quad x\in \R^d, t \in (0,1].
}
We will often suppress the first two arguments of~$Q_p$ if this does not lead to ambiguity. Note that~$u$ solves the classical heat equation, while~$v$ solves the rescaled backwards heat equation
\eq{\label{BackwardsHeat}
v_t = -\frac1p\Delta_x v.
}
Proposition~\ref{Karamata} will follow from the inequality
\eq{
\frac{\partial Q_p(t)}{\partial t} \geq 0,\qquad t\in(0,1).
}
This inequality is a consequence of the wonderful identity
\eq{\label{BCT}
\frac{\partial Q_p(t)}{\partial t} = \frac{p-1}{4} (4\pi)^{-\frac{dp}{2}} t^{-\frac{d}{2} -2}\int\limits_{\R^d}\Disp(x- Y_x)(\mu_{x,t}(\R^d))^pv(x,t)\,dx.
}
The formula needs clarifications. The measures~$\mu_{x,t}$ are given by
\eq{
d\mu_{x,t}(y) = e^{-\frac{|x-y|^2}{4t}}\,d\mu(y).
}
One may treat~$\R^d$ as a probability space equipped with the probability measure~$\frac{\mu_{x,t}}{\mu_{x,t}(\R^d)}$. The symbol~$Y_x$ then denotes the vectorial random variable~$y$ on this probability space. Note that since~$x$ is a constant random variable, we may replace~$\Disp(x-Y_x)$ with~$\Disp\, Y_x$ in~\eqref{BCT}. We use probabilistic language to formulate the result in a similar manner to the original paper~\cite{BCT2006}. In fact, we will not use the probabilistic terminology anymore.
\begin{proof}[Proof of formula~\eqref{BCT}]
Without loss of generality, we may assume that~$\mu$ is compactly supported, since the general case may be reduced to this one by a standard limiting argument. Then, the functions~$u$ and~$v$ are rapidly decaying at infinity, which allows integration by parts in the space variables without care about substitutions at the boundary. 

We start with a direct differentiation of~$Q_p$ using the definition of this function (we suppress the arguments of functions):
\eq{\label{DirectDiff}
\frac{\partial Q_p(t)}{\partial t} = t^{\frac{d}{2}(p-1)}\int\limits_{\R^d}\Big(\frac{d(p-1)}{2t}u^pv + pu^{p-1} u_tv + u^pv_t\Big)\,dx.
}
We leave this formula for a while and rewrite the right hand side of~\eqref{BCT} in more classical terms. We start with~$\Disp(x- Y_x) = \E|x- Y_x|^2 - |\E(x-Y_x)|^2$ and compute these summands separately:
\alg{
\E|x-Y_x|^2 &= \frac{\int_{\R^d}|y-x|^2e^{-\frac{|x-y|^2}{4t}}\,d\mu(y)}{\int_{\R^d}e^{-\frac{|x-y|^2}{4t}}\,d\mu(y)} = \frac{4t^2 \tilde{u}_t}{\tilde{u}};\\
\big|\E(x-Y_x)\big|^2 &= \bigg|\frac{\int_{\R^d}(y-x)e^{-\frac{|x-y|^2}{4t}}\,d\mu(y)}{\int_{\R^d}e^{-\frac{|x-y|^2}{4t}}\,d\mu(y)}\bigg|^2 = \bigg|\frac{-2t\nabla_x\tilde{u}}{\tilde{u}}\bigg|^2 = 4t^2\frac{|\nabla_x\tilde{u}|^2}{\tilde{u}^2},
}
where~$\tilde{u}(x,t) = (4\pi t)^{\frac{d}{2}}u(x,t)$ or~$\tilde{u}(x,t) = \mu_{x,t}(\R^d)$. We plug these formulas into the right hand side of~\eqref{BCT} and obtain
\eq{
(p-1)(4\pi)^{-\frac{dp}{2}}t^{-\frac d2}\int\limits_{\R^d}\Big(\tilde u_t \tilde u^{p-1} - |\nabla_x\tilde u|^2 \tilde u^{p-2}\Big)v\,dx.
}
We express this quantity in terms of~$u$ with the help of the formula~$\tilde{u}_t = (4\pi t)^\frac d2(u_t + \frac{d}{2t}u)$:
\eq{
\hbox{R. H. S. of \eqref{BCT}} = (p-1)t^{\frac{d}{2}(p-1)}\int\limits_{\R^d}\Big(u_tu^{p-1} + \frac{d}{2t}u^p - |\nabla_x u|^2u^{p-2}\Big) v\,dx.
}
Recalling~\eqref{DirectDiff}, we are left with proving the identity
\eq{
\int\limits_{\R^d} \Big(\frac{d(p-1)}{2t} u^pv + pu^{p-1}u_tv + u^pv_t\Big)\,dx = (p-1)\int\limits_{\R^d}\Big(\frac{d}{2t}u^pv + u^{p-1}u_tv - |\nabla_x u|^2u^{p-2}v\Big)\,dx,
}
which is equivalent to
\eq{\label{SimplerFormula}
\int\limits_{\R^d}\big(u^{p-1}u_t v + u^p v_t\big)\,dx = - (p-1) \int\limits_{\R^d}|\nabla_x u|^2 u^{p-2}v\,dx. 
}
We use that~$u$ solves the heat equation and~$v$ solves~\eqref{BackwardsHeat} to rewrite the left hand side of~\eqref{SimplerFormula}:
\eq{\label{Previous}
\int\limits_{\R^d}\big(u^{p-1}u_t v + u^p v_t\big)\,dx = \int\limits_{\R^d}u^{p-1}v\Delta_x u  - \frac{1}{p}\int\limits_{\R^d}u^p\Delta_x v.
}
We use integration by parts several times to rewrite the right hand side of~\eqref{SimplerFormula} (the angle brackets below denote the scalar product in~$\R^d$):
\mlt{
-(p-1)\int\limits_{\R^d}|\nabla_x u|^2u^{p-2}v = -\int\limits_{\R^d}\scalprod{(p-1)u^{p-2}\nabla_x u}{v\nabla_x u} = \int\limits_{\R^d}u^{p-1}\mathrm{div}[v\nabla_x u] =\\ \int\limits_{\R^d}u^{p-1}v\Delta_xu + \int\limits_{\R^d}u^{p-1}\scalprod{\nabla_xu}{\nabla_x v} = \int\limits_{\R^d}u^{p-1}v\Delta_xu -\frac1p\int\limits_{\R^d}u^{p}\Delta_x v.
}
This coincides with the right hand side of~\eqref{Previous}. So,~\eqref{SimplerFormula} is established together with~\eqref{BCT}.
\end{proof}
\begin{Rem}\label{EqualityRemark}
We have proved Proposition~\textup{\ref{Karamata}}. It also follows from~\eqref{BCT} that the inequality~\eqref{KaramataFormula} turns into equality if and only if~$\mu = \delta_x$ for some~$x\in \R^d$. Indeed\textup,~$\Disp(x-Y_x) = \Disp(Y_x)= 0$ if and only if~$y$ is constant~$\mu_{x,t}$ almost everywhere\textup, which means~$\mu_{x,t}$ is a delta measure.
\end{Rem}
\begin{Rem}
One may interpret Proposition~\textup{\ref{Karamata}} as an averaged version of Harnack's inequality for the heat equation. 
\end{Rem}

\section{Improving the monotonicity formula}\label{S3}
\begin{Def}
Let~$w$ be a continuous positive weight. Define its smoothness function~$\s[w]\colon \mathbb{R}_+\to[1,\infty]$ by the formula
\eq{
s[w](\zeta) = \sup\Set{\frac{w(x)}{w(y)}}{|x-y| \leq \zeta,\ x,y\in\R^d}.
}
\end{Def}
\begin{Rem}
The function~$\s[w]$ does not decrease and equals one at zero.
\end{Rem}
\begin{Ex}\label{ExampleOfPolynom}
Let~$\theta > 0$. Then\textup,~$\s[(1+|\cdot|)^{-\theta}](\zeta) = (1+\zeta)^{\theta}$.
\end{Ex}
\begin{Le}\label{SmoothnessConv}
Let~$\Psi \geq 0$. Then\textup,~$\s[w*\Psi]\leq \s[w]$ pointwise.
\end{Le}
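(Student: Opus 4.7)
The plan is to prove the inequality pointwise by transferring the defining bound of~$\s[w]$ through integration against a non-negative kernel. The underlying observation is that the predicate ``$x,y$ lie within distance~$\zeta$'' is translation invariant, so it remains valid after shifting both arguments by the convolution variable.

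More concretely, I would fix~$\zeta \geq 0$, pick arbitrary points~$x,y\in\R^d$ with~$|x-y|\leq \zeta$, and observe that for every~$z\in\R^d$ one has~$|(x-z)-(y-z)|=|x-y|\leq \zeta$. By the definition of~$\s[w](\zeta)$ this yields the pointwise bound $w(x-z)\leq \s[w](\zeta)\,w(y-z)$. I would then multiply by~$\Psi(z)\geq 0$---the only step where the sign hypothesis on~$\Psi$ is used, since non-negativity is what preserves the direction of the inequality---and integrate in~$z$:
$$(w*\Psi)(x) = \int w(x-z)\Psi(z)\,dz \;\leq\; \s[w](\zeta)\int w(y-z)\Psi(z)\,dz = \s[w](\zeta)\,(w*\Psi)(y).$$
Dividing by~$(w*\Psi)(y)$ and taking the supremum over all admissible pairs~$(x,y)$ then gives~$\s[w*\Psi](\zeta)\leq \s[w](\zeta)$.

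There is no serious obstacle: the lemma is essentially the statement that convolution with a non-negative kernel is a positive linear operator, so any uniform multiplicative inequality~$w(\tilde x)\leq C\,w(\tilde y)$ valid over all translates of a fixed pair~$(\tilde x,\tilde y)$ survives the averaging. The only minor point worth a remark is positivity of~$(w*\Psi)(y)$, so that the division is legitimate; this follows from~$w$ being continuous and strictly positive together with~$\Psi\not\equiv 0$, while the degenerate case~$\Psi\equiv 0$ makes the conclusion vacuous.
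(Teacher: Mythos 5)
Your proof is correct and is essentially identical to the paper's: both transfer the pointwise bound $w(x-z)\leq \s[w](\zeta)\,w(y-z)$ through integration against the non-negative kernel $\Psi$ and conclude $w*\Psi(x)\leq \s[w](\zeta)\,w*\Psi(y)$ for $|x-y|\leq\zeta$. Your added remark on the positivity of $(w*\Psi)(y)$ is a harmless extra precaution that the paper leaves implicit.
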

\begin{proof}
It suffices to prove
\eq{
w*\Psi(x) \leq \s[w](\zeta)w*\Psi(y), \qquad\hbox{provided}\ |x-y|\leq \zeta.
}
By definition, we have
\eq{
w*\Psi(x) = \int\limits_{\R^d}\Psi(z)w(x-z)\,dz \leq \int\limits_{\R^d}\Psi(z)\s[w](\zeta)w(y-z)\,dz = \s[w](\zeta)w*\Psi(y),\quad |x-y| \leq \zeta.
}
\end{proof}
\begin{Le}\label{SmoothnessDil}
Let~$t \in (0,1)$\textup, let~$w$ be a continuous weight. Then the dilated weight~$W$\textup, i.e.~$W(x) = w(tx)$\textup, satisfies the inequality~$\s[W] \leq \s[w]$ pointwise.
\end{Le}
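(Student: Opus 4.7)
The plan is a direct unwinding of the definition of $\s$, using only two facts: the dilation scales distances by~$t$, and $\s[w]$ is non-decreasing in its argument (noted in the remark just after the definition).

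First, I would fix $\zeta > 0$ and pick arbitrary points $x,y \in \R^d$ with $|x-y|\leq \zeta$. Then I would observe that the corresponding points $tx, ty$ satisfy $|tx-ty| = t|x-y| \leq t\zeta \leq \zeta$, where the last inequality uses $t\in (0,1)$. Consequently, by the definition of $\s[w]$ applied at scale $t\zeta$ (or even at scale $\zeta$),
\begin{equation*}
\frac{W(x)}{W(y)} = \frac{w(tx)}{w(ty)} \leq \s[w](t\zeta) \leq \s[w](\zeta).
\end{equation*}

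Taking the supremum over all admissible pairs $(x,y)$ yields $\s[W](\zeta) \leq \s[w](\zeta)$, as desired. There is no real obstacle here: the lemma is essentially a bookkeeping fact recording that composition with a contraction can only improve (decrease or preserve) the modulus-of-continuity-type quantity $\s$. One could even get the slightly sharper bound $\s[W](\zeta) \leq \s[w](t\zeta)$, though the stated weaker form is all that will be needed downstream.
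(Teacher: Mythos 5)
Your proof is correct and follows essentially the same route as the paper's: unwind the definition of $\s[W]$, use that the dilation contracts distances by the factor $t$, and conclude via the monotonicity of $\s[w]$. The paper even records the same sharper fact you mention, namely $\s[W](\zeta)=\s[w](t\zeta)$, before applying monotonicity.
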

\begin{proof}
We simply work with the definition and use the property that the smoothness function does not decrease:
\eq{
\s[W](\zeta) = \sup\limits_{|x-y|\leq \zeta}\frac{W(x)}{W(y)} = \sup\limits_{|x-y|\leq \zeta}\frac{w(tx)}{w(ty)} = \s[w](t\zeta)\leq \s[w](\zeta).
}
\end{proof}
The next definition is inspired by~\cite{Preiss1987}.
\begin{Def}
We call a subset~$\MM$ of the class~$\Sw'(\R^d)$ an invariant cone of measures provided it satisfies the following properties\textup:
\begin{enumerate}[1\textup)]
\item any element~$\mu \in\MM$ is a measure\textup, i.e. a non-negative distribution\textup;
\item the set~$\MM$ is closed in the topology of~$\Sw'(\R^d)$\textup;
\item the set~$\MM$ is dilation invariant\textup;
\item the set~$\MM$ is translation invariant\textup;
\item the set~$\MM$ is a cone in the sense that~$c\mu\in\MM$ provided~$c \geq 0$ and~$\mu \in \MM$. 
\end{enumerate}
\end{Def}
\begin{Ex}
Let~$q \in [1, d-1]$ be a natural number. Let~$\MM_q$ be the set of all non-negative distributions in~$\R^d$ that depend on at most~$q$ coordinates in the sense that for any~$\mu \in \MM_q$ there exists~$L\in G(d,q)$ such that~$\mu$ is preserved by shifts by elements of~$L^{\perp}$. It is easy to see that~$\MM_q$ is an invariant cone of measures.
\end{Ex}
\begin{Ex}
The set
\eq{\label{ConeFromSpace}
\MM^\WW = \Set{\mu \in \Sw'(\R^d)}{\mu \geq 0\hbox{ and } \exists\, a\in \R^\ell \setminus \{0\}\hbox{ such that } a\otimes\mu \in \WW}
}
is an invariant cone of measures. For the classical case considered in Example~\textup{\ref{GagliardoNirenbergExample},} the cone~$\MM^\WW$ in formula~\eqref{ConeFromSpace} coincides with the cone~$\MM_1$ introduced in the previous example\textup; the case of the divergence free space given in Example~\textup{\ref{DivFree}} leads to~$\MM_{d-1}$.
\end{Ex}
\begin{Th}\label{Robust}
Let~$\MM$ be an invariant cone of measures that does not contain~$\delta_0$. Let~$G$ be a continuous positive weight that satisfies the smoothness bound
\eq{\label{ReqOnWeight}
\s[G](\zeta) \leq C_G(1+|\zeta|)^{\theta_G},\quad \zeta \in \R_+.
}
Let~$p \in (1,\infty)$ be a fixed number. There exists a tiny constant~$\delta$\textup, whose choice depends on the parameters~$\MM,\theta_G,C_G$\textup, and~$p$\textup, but not on the particular choice of~$G$ and~$\mu\in \MM$ such that
\eq{\label{Improvement}
\|\Heat[\mu](\fdot,t)\|_{L_p(\Heat[G](\fdot,\frac{1-t}{p}))} \leq t^{-\frac{d}{2}\frac{p-1}{p} + \delta}\|\Heat[\mu](\fdot,1)\|_{L_p(G)},\quad t\in (0,1],
}
provided the value on the right hand side is finite.
\end{Th}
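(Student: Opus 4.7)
The plan is to sharpen the proof of Proposition~\ref{Karamata} by improving the key differential identity~\eqref{BCT}. Remark~\ref{EqualityRemark} identifies Dirac masses as the sole equality cases there, and since~$\MM$ is translation- and dilation-invariant, closed in~$\Sw'(\R^d)$, and excludes~$\delta_0$ (hence, by translation invariance, every~$\delta_a$), one expects to turn this equality analysis into a quantitative uniform gap.

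The core step is a pointwise \emph{variance lower bound}: there exists a constant~$c=c(\MM)>0$ such that for every~$\mu\in\MM\setminus\{0\}$, every~$x\in\R^d$, and every~$t>0$,
\[
\Disp(x-Y_x)\geq c\,t,
\]
where~$Y_x$ is distributed according to the probability measure~$\mu_{x,t}/\mu_{x,t}(\R^d)$. By the translation and dilation invariance of~$\MM$ it suffices to prove this at~$(x,t)=(0,1)$, and by the cone property one may normalize~$\mu_{0,1}(\R^d)=1$. Suppose for contradiction that a sequence~$\mu_n\in\MM$ satisfies~$\Disp_{\mu_{n,0,1}}(Y)\to 0$: the probability measures~$\mu_{n,0,1}$ then concentrate at their means~$a_n\in\R^d$, and after an additional translation and dilation of~$\mu_n$ (permitted by the invariances of~$\MM$) one may arrange that~$a_n$ stays in a fixed compact set. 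A vague subsequential limit of the~$\mu_{n,0,1}$ is then of the form~$\delta_{a_*}$; using the positivity of the~$\mu_n$ together with the exponential concentration, the corresponding~$\mu_n$'s converge in~$\Sw'$ to a non-zero multiple of~$\delta_{a_*}$, which must lie in~$\MM$ and contradict the hypothesis.

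Granted the variance bound, one inserts it pointwise into~\eqref{BCT}. Using~$(\mu_{x,t}(\R^d))^p=\tilde u^p$ and the representation $Q_p[\mu,G](t)=(4\pi)^{-dp/2}t^{-d/2}\int\tilde u^p\,v\,dx$ implicit in~\eqref{QpDef}, one obtains the logarithmic differential inequality
\[
\frac{\partial_t Q_p(t)}{Q_p(t)}\geq\frac{(p-1)c}{4t},\qquad t\in(0,1].
\]
Integrating from~$t$ to~$1$ yields~$Q_p[\mu,G](t)\leq t^{(p-1)c/4}Q_p[\mu,G](1)$, which in norm form is precisely~\eqref{Improvement} with~$\delta=(p-1)c/(4p)$. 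Any residual dependence on~$C_G$ and~$\theta_G$ enters only through approximation steps needed to justify~\eqref{BCT} for weights~$G$ that are merely continuous and of tempered growth; Lemmas~\ref{SmoothnessConv} and~\ref{SmoothnessDil} supply the uniform control of the smoothness function along the approximation.

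The main obstacle is the compactness step inside the variance lower bound. Positive tempered measures with~$\int e^{-|y|^2/4}\,d\mu\leq 1$ are not automatically relatively compact in~$\Sw'(\R^d)$---Schwartz test functions decay only polynomially and can in principle see runaway mass at infinity---so one cannot pass directly to a~$\Sw'$-limit of the~$\mu_n$. Overcoming this requires exploiting the full symmetry group of~$\MM$ (translations, dilations, and cone rescaling) to normalize the~$\mu_n$ so that a subsequential~$\Sw'$-limit both exists and is forced to have the shape of a Dirac mass, which then contradicts the hypothesis on~$\MM$ and closes the argument.
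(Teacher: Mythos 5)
Your argument hinges on the pointwise variance lower bound~$\Disp(x-Y_x)\geq c\,t$ for all~$\mu\in\MM\setminus\{0\}$, all~$x\in\R^d$ and all~$t>0$, and this claim is false. Take~$d=1$ and let~$\MM$ be the closed invariant cone generated by~$\mu=\chi_{[0,\infty)}\,dx$ (it consists of the measures~$c\chi_{[a,\infty)}dx$, $c\chi_{(-\infty,a]}dx$, multiples of Lebesgue measure, and~$0$, so it does not contain~$\delta_0$). For~$x=-M$ with~$M$ large and~$t=1$, the tilted measure~$d\mu_{x,1}(y)=e^{-(y+M)^2/4}\chi_{[0,\infty)}(y)\,dy$ is, after normalization, approximately an exponential distribution of rate~$M/2$ near~$y=0$, so~$\Disp(x-Y_x)\sim M^{-2}\to 0$. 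The same happens with half-space indicators in any dimension. The point is that at locations~$x$ where~$\mu$ has exponentially little mass, the Gaussian tilt can concentrate~$\mu_{x,t}$ arbitrarily strongly even though~$\mu$ is nowhere close to atomic; your compactness argument silently breaks at exactly this spot, because recentering~$\mu_n$ at the mean~$a_n$ of~$\mu_{n,0,1}$ changes the center of the tilting Gaussian, and the recentered measure need not have small variance any more (in the example above, recentering~$\chi_{[M,\infty)}$ to~$\chi_{[0,\infty)}$ restores variance of order one). Consequently the logarithmic differential inequality~$\partial_tQ_p/Q_p\geq (p-1)c/(4t)$ cannot be derived by a pointwise insertion into~\eqref{BCT}.

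This is precisely why the paper does not attempt a pointwise gap: the integrand in~\eqref{BCT} carries the damping factor~$(\mu_{x,t}(\R^d))^p\,v(x,t)$, which kills the contribution of the bad points~$x$, and the correct quantitative statement is the \emph{integrated} one of Proposition~\ref{RobustProp}: if~$Q_p(1)=1$ and the integral in~\eqref{BCT} is small, then~$\mu$ concentrates near a single point in the weighted sense~\eqref{Conc3}. That concentration statement (proved by a discretization into cubes and the kind/good point combinatorics) is what feeds the compactness argument and produces~$\delta_0\in\MM$, after the rescaling reduction of Lemma~\ref{RobustRescaling} which localizes everything to~$t=1$. If you want to salvage your route, you would need to restrict the variance bound to points~$x$ where~$\mu_{x,t}(\R^d)$ is not too small relative to the bulk and show that such points carry most of the weight in~\eqref{BCT} --- but that is essentially a reformulation of Proposition~\ref{RobustProp}, not a shortcut around it.
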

This theorem might be thought of as a quantification of Remark~\ref{EqualityRemark}. The proof is quite lengthy (though fairly straightforward) and occupies the whole remaining part of this section. First, we would like to get rid of time. We recall the function~$Q_p = Q_p[\mu,G]$ defined in~\eqref{QpDef}.
\begin{Le}\label{RobustRescaling}
The estimate~\eqref{Improvement} follows from the inequality
\eq{\label{RobustDerivative}
\frac{\partial Q_p}{\partial t}(1) \geq \delta p Q_p(1),
}
once it is established for all continuous weights~$G$ satisfying~\eqref{ReqOnWeight} and~$\mu \in \MM$ uniformly.
\end{Le}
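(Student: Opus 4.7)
The plan is to upgrade the derivative estimate \eqref{RobustDerivative} at $t=1$ to the integrated estimate \eqref{Improvement} at an arbitrary $t\in(0,1]$ by a rescaling argument. Raising \eqref{Improvement} to the $p$-th power and recalling~\eqref{QpDef}, it is equivalent to
\[
Q_p[\mu,G](t)\leq t^{\delta p}\,Q_p[\mu,G](1),\qquad t\in(0,1],
\]
which, by integrating $(\log Q_p)'$ over $[t_0,1]$, follows in turn from the pointwise bound $t\,Q_p'[\mu,G](t)\geq\delta p\,Q_p[\mu,G](t)$ at every $t=t_0\in(0,1]$.

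Fix $t_0\in(0,1]$. To deduce the pointwise bound at~$t_0$ from the hypothesis at~$1$, I would build a rescaled pair $(\tilde\mu,\tilde G)$ whose $Q_p$ is a time-shift of $Q_p[\mu,G]$. Concretely, take $\tilde\mu$ to be the pushforward of~$\mu$ under the dilation $y\mapsto y/\sqrt{t_0}$ (so $\tilde\mu\in\MM$ by dilation invariance of~$\MM$) and set $\tilde G(x):=\Heat[G]\bigl(\sqrt{t_0}\,x,\tfrac{1-t_0}{p}\bigr)$. A direct change of variable inside the defining heat integrals and the semigroup property~\eqref{Semigroup} yield $\Heat[\tilde\mu](x,s)=t_0^{d/2}\Heat[\mu](\sqrt{t_0}\,x,st_0)$ and $\Heat[\tilde G]\bigl(x,\tfrac{1-s}{p}\bigr)=\Heat[G]\bigl(\sqrt{t_0}\,x,\tfrac{1-st_0}{p}\bigr)$. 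Substituting $y=\sqrt{t_0}\,x$ in the integral defining $Q_p[\tilde\mu,\tilde G](s)$ then collapses all the $t_0$-factors and produces the key identity
\[
Q_p[\tilde\mu,\tilde G](s)=Q_p[\mu,G](s t_0),\qquad s\in(0,1].
\]

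What remains is to certify that $(\tilde\mu,\tilde G)$ is an admissible input to~\eqref{RobustDerivative} with the \emph{same} constant~$\delta$: the weight $\tilde G$ must be continuous, positive, and satisfy \eqref{ReqOnWeight} with the same $C_G,\theta_G$. Continuity and positivity are immediate, and for the smoothness I write $\tilde G(x)=H(\sqrt{t_0}\,x)$ with $H=G*\Psi$ for a positive Gaussian $\Psi$; Lemma~\ref{SmoothnessConv} gives $\s[H]\leq\s[G]$ and Lemma~\ref{SmoothnessDil} (applicable because $\sqrt{t_0}\leq 1$) then gives $\s[\tilde G]\leq\s[H]\leq\s[G]$. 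The finiteness requirement is supplied by Proposition~\ref{Karamata}, which yields $Q_p[\tilde\mu,\tilde G](1)=Q_p[\mu,G](t_0)\leq Q_p[\mu,G](1)<\infty$. Differentiating the displayed identity at $s=1$ and invoking~\eqref{RobustDerivative} then delivers $t_0\,Q_p'[\mu,G](t_0)\geq\delta p\,Q_p[\mu,G](t_0)$ for every $t_0\in(0,1]$, and the integration announced above gives~\eqref{Improvement}. The only real subtlety is that the two smoothness lemmas proved just before are precisely what is needed to propagate $C_G,\theta_G$ through the dilation-plus-smoothing of~$G$ so that the hypothesis applies with the \emph{same}~$\delta$; no combinatorial or analytic obstacle arises beyond careful bookkeeping with the heat semigroup.
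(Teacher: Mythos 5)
Your proposal is correct and follows essentially the same route as the paper: reduce \eqref{Improvement} to the pointwise differential inequality $tQ_p'(t)\geq \delta p\,Q_p(t)$, then transport the point $t_0$ to $1$ by a parabolic rescaling of the pair $(\mu,G)$, using Lemmas~\ref{SmoothnessConv} and~\ref{SmoothnessDil} to keep \eqref{ReqOnWeight} with the same constants. The only (immaterial) difference is your mass-preserving pushforward normalization of $\tilde\mu$, which makes the identity $Q_p[\tilde\mu,\tilde G](s)=Q_p[\mu,G](st_0)$ come out without the extra power of $t$ appearing in the paper's version.
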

Before we pass to the proof, we note that if~$Q_p(1)$ is finite, then~$Q_p(t)$ is finite for any~$t\in (0,1)$ by Proposition~\ref{Karamata}. Therefore, we will be always working with finite quantities in the proofs below. 
\begin{proof}
Assume~\eqref{RobustDerivative} holds with the constant~$\delta$ uniform with respect to~$\mu$ and~$G$. The estimate~\eqref{Improvement} may be rewritten in terms of~$Q_p$ as~$Q_p(t) \leq t^{p\delta}Q_p(1)$,~$t \in (0,1]$; it clearly follows from
\eq{\label{RobustRescalingWanted}
\frac{Q_p'(t)}{Q_p(t)} \geq \frac{p\delta}{t},\qquad t\in (0,1).
}
We fix~$\mu$ and~$G$ and construct the functions~$u$ and~$v$ from them as prescribed by formula~\eqref{QpDef} (we set~$w:= G$ in that formula). We also consider dilated functions:
\eq{\label{DilationOfFunctions}
\tilde{u}(x,\theta) = u(tx,t^2\theta);\qquad \tilde{v}(x,\theta) = v(tx,t^2\theta),\qquad x\in \R^d,\ \theta > 0.
}
These functions solve the same heat and backwards heat equations as~$u$ and~$v$ correspondingly. Let us investigate the measure~$\tilde{\mu}$ and weight~$\tilde{G}$ that generate these functions as prescribed by formulas~\eqref{QpDef}.

The situation with~$\tilde{\mu}$ is simpler:~$\tilde{\mu}$ is the limit of~$\tilde{u}(\fdot,\theta)$ as~$\theta\to 0$, that is~$\tilde{\mu}$ is a dilation of~$\mu$. By the dilation invariance of~$\MM$, we have~$\tilde{\mu}\in \MM$.

The weight~$\tilde{G}$ is the limit value of the function~$\tilde{v}$ at the level~$\theta = 1$, and
\eq{
\tilde v(x,\theta) = v(tx,t^2\theta) = \Heat[G]\big(tx,\frac{1-t^2\theta}{p}\big).
}
Consequently, 
\eq{
\tilde G(x) = \Heat[G]\big(tx,\frac{1-t^2}{p}\big).
}
So,~$\tilde G(x) = G*\Phi(tx)$, where~$\Phi$ is a certain Gaussian. Recall that~$t < 1$, so Lemmas~\ref{SmoothnessConv} and~\ref{SmoothnessDil} lead to
\eq{
\s[\tilde G](\zeta) \leq \s[G](\zeta) \stackrel{\scriptscriptstyle \eqref{ReqOnWeight}}{\leq} C_G(1+|\zeta|)^{\theta_G}.
}
Therefore, we are allowed to apply our assumption~\eqref{RobustDerivative} to~$\tilde{\mu}$ and~$\tilde G$:
\eq{\label{Ineqtheta1}
Q_p'[\tilde \mu, \tilde G](1) \geq p\delta Q_p[\tilde \mu, \tilde G](1).
} 
It remains to express~$Q_p[\tilde \mu,\tilde G]$ in terms of~$Q_p[\mu,G]$:
\mlt{\label{TimeRescaling}
Q_p[\tilde \mu,\tilde G](\theta) = \theta^{\frac{d(p-1)}{2}}\int\limits_{\R^d} \tilde u^p(x,\theta)\tilde v(x,\theta)\,dx = \theta^{\frac{d(p-1)}{2}}\int\limits_{\R^d}u^p(tx,t^2\theta)v(tx,t^2\theta)\,dx = \\
\theta^{\frac{d(p-1)}{2}}t^{-d}\int\limits_{\R^d}u^p(y,t^2\theta)v(y,t^2\theta)\,dy = t^{-dp}Q_p[\mu,G](t^2\theta).
}
Plugging~$\theta = 1$, we get
\eq{\label{RobustValue}
Q_p[\mu,G](t^2) = t^{dp}Q_p[\tilde \mu,\tilde G](1).
}
If we differentiate~\eqref{TimeRescaling} with respect to~$\theta$ and plug~$\theta = 1$, we obtain
\eq{\label{RobustDerivative1}
Q_p'[\mu,G](t^2) = t^{dp-2}Q_p'[\tilde \mu,\tilde G](1).
}
A combination of~\eqref{Ineqtheta1},~\eqref{RobustValue},  and~\eqref{RobustDerivative1} leads to the desired estimate~\eqref{RobustRescalingWanted}.
\end{proof}
\begin{St}\label{RobustProp}
Let~$\rho$ be a fixed weight\textup:
\eq{\label{RhoFormula}
\rho(x) = (1+|x|)^{-\theta_G - 2d}.
} 
Let a positive weight~$G$ satisfy the smoothness bound~\eqref{ReqOnWeight}. For any~$\nu > 0$ there exists~$\eta > 0$ such that any tempered measure~$\mu \in \Sw'(\R^d)$ that satisfies 
\alg{
\label{Conc1}\int\limits_{\R^d}\Big(\int\limits_{\R^d} e^{-\frac{|x-y|^2}{4}}\,d\mu(y)\Big)^pG(x)\,dx = 1;\\
\label{Conc2}\int\limits_{\R^d}\Big(\int\limits_{\R^d} |\mass(x)-y|^2e^{-\frac{|x-y|^2}{4}}\,d\mu(y)\Big)\Big(\int\limits_{\R^d} e^{-\frac{|x-y|^2}{4}}\,d\mu(y)\Big)^{p-1}G(x)\,dx < \eta,
} 
where~$\mass(x)$ is short for
\eq{
\frac{\int_{\R^d} y\, e^{-\frac{|x-y|^2}{4}}\,d\mu(y)}{\int_{\R^d}e^{-\frac{|x-y|^2}{4}}\,d\mu(y)},
}
is concentrated around a point~$x_0\in \R^d$ in the sense that
\eq{\label{Conc3}
\nu\mu(B_\nu(x_0)) \geq\!\!\!\! \int\limits_{|x-x_0| \geq \nu}\!\!\!\! \rho(x-x_0)\,d\mu(x).
} 
The choice of~$\eta$ is independent of~$\mu$ and~$G$\textup, it depends on~$\nu$\textup,~$C_G$\textup, and~$\theta_G$ only.
\end{St}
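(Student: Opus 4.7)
I would prove Proposition~\ref{RobustProp} by contradiction and compactness. Assume the claim fails: for some $\nu>0$ there exist sequences $\{\mu_n\}$ of tempered non-negative measures and $\{G_n\}$ of continuous positive weights obeying the smoothness bound \eqref{ReqOnWeight} with the fixed constants $C_G,\theta_G$, such that \eqref{Conc1} holds for each $n$, \eqref{Conc3} fails at every $x_0\in\R^d$ for every $n$, and the left-hand side of \eqref{Conc2} tends to $0$. The plan is to extract a limit measure $\mu$, show that the vanishing of the Gaussian variance integral forces $\mu=c\,\delta_{x_0}$ for a single point, and then contradict the failure of \eqref{Conc3} via the vague-$*$ convergence $\mu_n\to c\,\delta_{x_0}$.

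Normalization uses the two symmetries that leave \eqref{Conc1}--\eqref{Conc2} and the smoothness constants invariant: the joint rescaling $(\mu,G)\mapsto(c\mu,c^{-p}G)$, and the joint translation. The rescaling is used to arrange $G_n(0)=1$, after which the smoothness bound gives $C_G^{-1}(1+|x|)^{-\theta_G}\le G_n(x)\le C_G(1+|x|)^{\theta_G}$. Testing \eqref{Conc1} against the local lower bound $U_n(x)\ge e^{-(|x-y|+\nu)^2/4}\mu_n(B_\nu(y))$, where $U_n(x):=\int e^{-|x-y|^2/4}d\mu_n(y)$, then produces the pointwise envelope $\mu_n(B_\nu(y))\lesssim G_n(y)^{-1/p}\lesssim(1+|y|)^{\theta_G/p}$. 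The exponent $\theta_G+2d$ in $\rho$ has been chosen precisely so that this growth rate yields $\int\rho\,d\mu_n\le K(\nu,C_G,\theta_G,p,d)$ uniformly in $n$, which will make the tail of \eqref{Conc3} manageable. The translation is then used to center the probability measure $P_n:=U_n^pG_n\,dx$ so that a uniform fraction of its mass sits inside a common ball $B_R(0)$; I would establish this equi-tightness from the smallness of \eqref{Conc2} by a Chebyshev argument showing that for a majority of $x$ under $P_n$ the conditional Gaussian mean $m_n(x)$ lies in a common bounded region.

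Banach--Alaoglu then extracts a subsequential vague-$*$ limit $\mu_n\to\mu$ as Radon measures on compacts, and a companion diagonal argument provides a limit $G_n\to G$ in the sense needed to pass to the limit in the Gaussian convolutions. Dominated convergence gives $U_n(x)\to U(x)$ and $m_n(x)\to m(x)$ pointwise on $\{U>0\}$, and Fatou applied to \eqref{Conc1}, \eqref{Conc2} yields $\int U^pG\,dx\le 1$ and $\int\Disp(Y_x)\,U^pG\,dx=0$. Equi-tightness prevents $\mu$ from vanishing. Consequently $\Disp(Y_x)=0$ on the open set $\{U>0\}$, so by Remark~\ref{EqualityRemark} each conditional measure $\mu_{x,1}$ is a Dirac mass, and since $d\mu(y)=e^{|x-y|^2/4}d\mu_{x,1}(y)$ is independent of $x$, the measure $\mu$ itself is a single Dirac $c\,\delta_{x_0}$ with $c>0$.

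Finally, vague-$*$ convergence $\mu_n\to c\,\delta_{x_0}$ together with the Portmanteau lemma gives $\mu_n(B_\nu(x_0))\to c$, while the tail $\int_{|x-x_0|\ge\nu}\rho(x-x_0)\,d\mu_n$ tends to $0$ by splitting at some large radius $R$: the inner part tends to $0$ because $\mu$ assigns no mass to the annulus $\nu\le|x-x_0|\le R$, and the outer part is uniformly small by the bound $\int\rho\,d\mu_n\le K$ combined with the decay of $\rho$. Hence \eqref{Conc3} holds at $x_0$ for all large $n$, contradicting the standing hypothesis. The main technical obstacle I foresee is the translation step: converting the integral smallness of the Gaussian variance in \eqref{Conc2} into genuine equi-tightness of $P_n$, because without it the limit $\mu$ could be the zero measure and the whole argument would collapse.
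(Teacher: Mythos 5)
Your overall strategy (compactness and contradiction) is genuinely different from the paper's proof, which is a direct, quantitative discretization argument: the paper tiles $\R^d$ by cubes of side $\nu$, sets $a_k=\mu(Q_k)$, shows from \eqref{Conc1} that $\sum_k a_k^pG(\nu k)\sim 1$, and then uses \eqref{Conc2} to produce a single ``good kind'' cube $Q_{k_0}$ whose mass dominates both the nearby annulus and the far tail, yielding \eqref{Conc3} with an explicit $\eta(\nu,C_G,\theta_G)$. Several of your intermediate steps are sound and parallel the paper's: the normalization $(\mu,G)\mapsto(c\mu,c^{-p}G)$, the pointwise envelope $\mu(B_\nu(y))\lesssim G(y)^{-1/p}$ obtained by testing \eqref{Conc1}, the resulting uniform bound $\int\rho\,d\mu_n\le K$, and the identification of a nonzero limit with $\int\Disp(Y_x)U^pG\,dx=0$ as a single Dirac mass.

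However, the step you yourself flag as the main obstacle is a genuine gap, and it cannot be closed the way you propose. You say you would derive equi-tightness ``from the smallness of \eqref{Conc2} by a Chebyshev argument'' locating the conditional means $m_n(x)$. But conditions \eqref{Conc1} and \eqref{Conc2}$\to0$ alone are compatible with $\mu_n\to0$ vaguely after \emph{every} choice of translation: take $\mu_n=c_n\sum_{|j|\le n}\delta_{jL_n}$ with spacing $L_n\to\infty$ and $c_n$ fixed by \eqref{Conc1} (so $c_n\to0$ when, say, $G\equiv1$). Here $\Disp(Y_x)$ is uniformly tiny, yet $\sup_y\mu_n(B_r(y))\le c_n\to0$ for every fixed $r$, so every translated subsequential vague limit is the zero measure and your Dirac-limit contradiction never materializes. (This sequence is not a counterexample to the proposition --- \eqref{Conc3} holds for it at any lattice point precisely because the tail is weighted by the decaying $\rho$ --- but it shows that your tightness claim is false as stated.) To rescue the argument you would have to feed the \emph{negation} of \eqref{Conc3} at every $x_0$ into the non-degeneracy step, i.e.\ show that ``spread out relative to $\rho$ everywhere'' plus small variance is already contradictory at finite $n$; but that is essentially the quantitative content of the paper's ``kind/good point'' argument, and your proposal gives no mechanism for it. As written, the proof does not go through.
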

\begin{Rem}
The replacement of~$m(x)$ with any other function of~$x$ in~\eqref{Conc2} will make this inequality stronger.  
\end{Rem}
\begin{proof}[Proof of Theorem~\ref{Robust} assuming Proposition~\ref{RobustProp}]
By Lemma~\ref{RobustRescaling}, it suffices to show~\eqref{RobustDerivative} with a certain uniformity in the choice of~$\delta$. Assume the contrary: let there exist a sequence~$\{\mu_n\}_n$ of measures in~$\MM$ and a sequence~$\{G_n\}_n$ of weights that satisfy~\eqref{ReqOnWeight} uniformly such that~$Q_p[\mu_n,G_n](1) = 1$ and~$Q_p'[\mu_n,G_n](1) \to 0$ as~$n\to \infty$.

By~\eqref{QpDef}, the pair~$(\mu_n,G_n)$ fulfills~\eqref{Conc1} and by~\eqref{BCT} it fulfills~\eqref{Conc2} with some~$\eta_n$ tending to zero (the assumption~$p > 1$ is crucial here, see formula~\eqref{BCT}). Thus, by Proposition~\ref{RobustProp} and translation invariance (we shift the measures to have~$x_0 = 0$), we may also assume that
\eq{\label{ConcentrationAtZero}
\nu_n\mu_n(B_{\nu_n}(0)) \geq \int\limits_{|x| \geq \nu_n}\rho(x)\,d\mu_n(x),
}
where~$\nu_n \to 0$ as~$n\to \infty$. Consider the measures~$\tilde \mu_n = \frac{\mu_n}{\mu_n(B_{\nu_n}(0))}$. Note that these measures still lie in~$\MM$. To get a contradiction, it suffices to show the limit relation
\eq{
\tilde\mu_n \stackrel{\scriptscriptstyle \Sw'(\R^d)}{\longrightarrow} \delta_0.
}

To verify the limit relation, pick~$f$ to be an arbitrary Schwartz function and rewrite the value of~$\tilde\mu_n$ as a functional at~$f$:
\eq{
\int\limits_{\R^d}f(x)\,d\tilde \mu_n(x) \stackrel{\scriptscriptstyle\eqref{ConcentrationAtZero}}{=} \frac{1}{\mu_n(B_{\nu_n}(0))}\int\limits_{B_{\nu_n}(0)}f(x)\,d\mu_n(x) + O(\nu_n)
}
since~$|f(x)| \lesssim \rho(x)$. It remains to notice that
\eq{
\frac{1}{\mu_n(B_{\nu_n}(0))}\int\limits_{B_{\nu_n}(0)}f(x)\,d\mu_n(x) \to f(0)
} 
since~$\nu_n\to 0$ and~$f$ is continuous. 
\end{proof}
Now we present the proof of Proposition~\ref{RobustProp}, thus completing the proof of Theorem~\ref{Robust}.
\begin{proof}[Proof of Proposition~\ref{RobustProp}]
We assume~$\nu < d^{-\frac12}$, which is no restriction. We split~$\R^d$ into the cubes~$Q_k$,~$k\in \Z^d$,
\eq{
Q_k = \prod_{i=1}^d[\nu k_i,\nu(k_i+1)),\quad k = (k_1,k_2,\ldots,k_d).
} 
Let also~$a_k = \mu(Q_k)$. We split the reasoning into four steps.

\paragraph{The quantity~$\sum_{k\in\Z^d}a_k^pG(\nu k)$ is separated away from zero and infinity.} To show the boundedness of the said sum, we start with a local estimate
\eq{\label{RPeq1}
\int\limits_{\R^d}e^{-\frac{|x-y|^2}{4}}\,d\mu(y) \geq \frac12 a_k,\quad x\in Q_k.
}
Therefore,
\mlt{
2^{-p}\sum\limits_{k\in\Z^d}a_k^pG(\nu k) \leq \sum\limits_{k\in\Z^d} 2^{-p} a_k^p\nu^{-d}\s[G](\nu\sqrt{d})\int\limits_{Q_k}G(x)\,dx \Leqref{RPeq1} \\ \nu^{-d}\s[G](\nu\sqrt{d})\sum\limits_{k\in\Z^d}\int\limits_{Q_k}\Big(\int\limits_{\R^d}e^{-\frac{|x-y|^2}{4}}\,d\mu(y)\Big)^pG(x)\,dx \Eeqref{Conc1} \nu^{-d}\s[G](\nu\sqrt{d}).
}
Thus, we have proved
\eq{\label{BoundednessFromAboveForWeightedSum}
\sum_{k\in\Z^d}a_k^pG(\nu k) \lesssim 1.
}

The reverse inequality is a bit harder to obtain. We start with yet another local (with respect to~$x$) estimate
\mlt{
\int\limits_{\R^d}e^{-\frac{|x-y|^2}{4}}\,d\mu(y) \lesssim \sum\limits_{k\in\Z^d}e^{-\frac{(|x-\nu k|- \sqrt{d})^2}{4}}a_k \leq\\ \Big(\sum\limits_{k\in\Z^d}e^{-\frac{(|x-\nu k|- \sqrt{d})^2}{4}}a_k^p\Big)^{\frac1p}\Big(\sum\limits_{k\in\Z^d}e^{-\frac{(|x-\nu k|- \sqrt{d})^2}{4}}\Big)^{\frac{1}{p'}}\lesssim \Big(\sum\limits_{k\in\Z^d}e^{-\frac{(|x-\nu k|-\sqrt{d})^2}{4}}a_k^p\Big)^{\frac1p}.
}
Consequently, by
\eq{
1\Lseqref{Conc1} \int\limits_{\R^d} \sum\limits_{k\in\Z^d}e^{-\frac{(|x-\nu k|-\sqrt{d})^2}{4}}a_k^p G(x)\,dx = \sum\limits_{k\in\Z^d}a_k^p\int\limits_{\R^d}e^{-\frac{(|x-\nu k| - \sqrt d)^2}{4}}G(x)\,dx,
}
the desired boundedness away from zero will follow once we verify the inequality
\eq{
\int\limits_{\R^d}e^{-\frac{(|x-\nu k| - \sqrt d)^2}{4}}G(x)\,dx \lesssim G(\nu k).
}
Here the verification is:
\mlt{
\int\limits_{\R^d}e^{-\frac{(|x-\nu k| - \sqrt d)^2}{4}}G(x)\,dx \leq G(\nu k) \int\limits_{\R^d}e^{-\frac{(|x-\nu k| - \sqrt d)^2}{4}}\s[G](|x-\nu k|)\,dx \Leqref{ReqOnWeight}\\ G(\nu k) C_G\int\limits_{\R^d} e^{-\frac{(|x|-\sqrt{d})^2}{4}}(1+|x|)^{\theta_G}\,dx \lesssim G(\nu k).
}
So, the proof of
\eq{\label{BoundFromBelow}
1\lesssim \sum_{k\in\Z^d}a_k^pG(\nu k)
}
is completed.

\paragraph{Kind points.} Let~$R$ be a large number to be specified later. Recall the weight~$\rho$ defined in~\eqref{RhoFormula}. A point~$k\in\Z^d$ is called \emph{kind} provided
\eq{
\nu^pa_k^p \geq \sum\limits_{\nu|k-m|\geq R}\rho(\nu(k-m))a_m^p.
}
We are going to show that most points are kind in the sense that
\eq{\label{RPKind}
\sum\limits_{k\, \hbox{\tiny is kind }} a_k^pG(\nu k) \geq \frac{1}{2}\sum\limits_{k\in\Z^d}a_k^pG(\nu k).
}
Note that both sides are finite by~\eqref{BoundednessFromAboveForWeightedSum}. A point that is not kind is \emph{evil}, then
\eq{
\sum\limits_{k\, \hbox{\tiny is evil }} a_k^pG(\nu k) \leq \nu^{-p}\!\!\!\!\! \sum\limits_{\genfrac{}{}{0pt}{-2}{k,m\in\Z^d}{\nu|k-m| \geq R}}\rho(\nu(k-m))a_m^pG(\nu k),
}
and~\eqref{RPKind} will follow provided we justify the estimate
\eq{
\nu^{-p}\!\!\!\!\!\sum\limits_{k\colon \nu|k-m|\geq R}\rho(\nu(k-m)) G(\nu k) \leq \frac12 G(\nu m),\quad \hbox{for any}\ m\in\Z^d.
}
Here the justification is:
\mlt{
\nu^{-p}\!\!\!\!\!\sum\limits_{k\colon \nu|k-m|\geq R}\rho(\nu(k-m)) G(\nu k) \leq \frac{G(\nu m)}{\nu^{p}} \sum\limits_{k\colon \nu|k-m|\geq R}\rho(\nu(k-m)) \s[G](\nu|k-m|) \LeqrefTwo{ReqOnWeight}{RhoFormula}\\
\frac{C_GG(\nu m)}{\nu^p}\!\!\!\!\!\sum\limits_{k\colon \nu|k-m|\geq R}(1+\nu|k-m|)^{-2d} \leq \frac{G(\nu m)}{2},
}
provided~$R$ is sufficiently large. We fix~$R$ to be so large that the latter estimate holds true together with
\eq{\label{add}
\sum\limits_{\nu|m| > R}\rho(\nu m) \leq (2\s[\rho](1))^{-p'},
}
where~$p'$ is the conjugate exponent to~$p$. Of course, the choice of~$R$ depends on~$\nu$. 

\paragraph{Good points.} Let~$\tau$ be a parameter to be chosen later. We call a point~$k\in \Z^d$ \emph{good} provided
\eq{\label{GoodDefinition}
\tau a_k \geq b_k,\quad \hbox{where}\qquad b_k = \min\Set{\!\!\!\!\!\sum\limits_{\genfrac{}{}{0pt}{-2}{m\colon\nu|k-m|\leq R,}{\sqrt{d} < |l-m|}}\!\!\!\!\! a_m}{l\in \Z^d}.
}
In other words,~$b_k$ is a sum of the~$a_m$ where~$m$ runs through the~$\nu^{-1} R$-neighborhood of~$k$ excluding some small ball (we exclude the points in the way that will make the sum as small as possible). We are going to prove that there exists a good kind point. More specifically, if~$\tau > \Theta(\nu,R)\eta$, then there is a point~$k$ that is~$R$-kind and~$\tau$-good. Here~$\Theta$ is a specific positive function of two positive arguments. It is high time to use condition~\eqref{Conc2}. 

We start with a local bound from below similar to~\eqref{RPeq1}. Let~$x\in Q_k$ and~$m(x) \in Q_l$ for some~$l \in \Z^d$. Then,
\mlt{
\int\limits_{\R^d}|\mass(x)-y|^2e^{-\frac{|x-y|^2}{4}}\,d\mu(y) \geq \sum\limits_{\genfrac{}{}{0pt}{-2}{m\colon \nu|k-m|\leq R}{\sqrt{d} < |l-m|}} \int\limits_{Q_m}|\mass(x)-y|^2e^{-\frac{|x-y|^2}{4}}\,d\mu(y) \geq\\ \nu^2 e^{-\frac{|R+\sqrt{d}|^2}{4}}\!\!\!\!\sum\limits_{\genfrac{}{}{0pt}{-2}{m\colon\nu|k-m|\leq R}{\sqrt{d} < |l-m|}} a_m,\quad x\in Q_k, \mass(x) \in Q_l.
}
Therefore,
\eq{
\int\limits_{\R^d}|\mass(x)-y|^2e^{-\frac{|x-y|^2}{4}}\,d\mu(y) \geq \nu^2 e^{-\frac{|R+\sqrt{d}|^2}{4}}b_k,\quad x\in Q_k,
}
which implies
\eq{\label{RPeq2}
\sum\limits_{k\in\Z^d} a_k^{p-1}b_kG(\nu k) \LeqrefTwo{Conc2}{RPeq1} \s[G](\nu\sqrt{d})\frac{2^{p-1}e^{\frac{|R+\sqrt{d}|^2}{4}}}{\nu^{2+d}}\eta.
}

We assume the contrary to our claim: let all kind points be~$\tau$-\emph{bad} (i.e. not~$\tau$-good). Then,
\mlt{
\tau \Lseqref{BoundFromBelow} \tau \sum\limits_{k\in\Z^d}a_k^pG(\nu k) \Leqref{RPKind} 2\tau \sum\limits_{k\,\hbox{\tiny is kind}}a_k^pG(\nu k) \stackrel{\hbox{\tiny kind points are bad}}{<}\\ 2\sum\limits_{k\in\Z^d} a_k^{p-1}b_kG(\nu k) \Leqref{RPeq2} \s[G](\nu\sqrt{d})\frac{2^{p}e^{\frac{|R+\sqrt{d}|^2}{4}}}{\nu^{2+d}}\eta.
}
We get a contradiction provided~$\tau :=2\Theta(\nu,R)\eta$, where~$\Theta$ is a specific positive function that may be easily written down (one needs to collect the constants in~\eqref{BoundFromBelow} and combine them with the above formula). Therefore, there exists a kind good point~$k_0$.

\paragraph{End of proof.} Note that if~$k$ is a good atom and~$\tau < 1$, then~$a_k$ must be excluded from the sum that defines~$b_k$ (because otherwise~$b_k \geq a_k$). In other words, if~$k$ is good, then the parameter~$l$ at which the minimum in~\eqref{GoodDefinition} is attained lies close to~$k$:~$|k-l| \leq \sqrt{d}$. Therefore, if~$k$ is good, then
\eq{\label{IfGood}
\tau a_k \geq\!\!\!\! \sum\limits_{\genfrac{}{}{0pt}{-2}{\nu|k-m|\leq R}{2\sqrt{d} < |k-m|}}\!\!\!\! a_m
}

We set~$x_0 = k_0$ and will show that such a choice indeed leads to~\eqref{Conc3} with~$\nu := 5\sqrt{d}\nu$ (we slightly enlarge this parameter). Without loss of generality, we may assume~$x_0 = 0$. We recall that~$\eta$ is still at our choice.

We wish to prove the inequality
\eq{\label{AlmostIt}
\nu a_0 \geq \int\limits_{|x|\geq 5\sqrt{d}\nu}\rho(x)\,d\mu(x).
}
We split the right hand side into two integrals to be estimated individually:
\eq{
\int\limits_{|x|\geq 5\sqrt{d}\nu} \leq \int\limits_{\genfrac{}{}{0pt}{-2}{\cup Q_m\colon 2\sqrt{d} < |m|}{\nu |m|<R}}+ \int\limits_{\cup Q_m\colon R \leq \nu|m|}.
}

The first part is estimated with the help of~\eqref{IfGood}:
\eq{
\sum\limits_{\genfrac{}{}{0pt}{-2}{\nu|m|<R}{2\sqrt{d}<|m|}}a_m \stackrel{\hbox{\tiny 0 is good}}{\leq} \tau a_0 < \frac{\nu}{2}a_0,
} 
provided~$\eta$ is sufficiently small (the specific bound may be expressed in terms of~$\Theta$).

The second part is estimated by
\eq{
\s[\rho](1) \sum\limits_{R < \nu |m|}a_m\rho(\nu m)\leq
 \s[\rho](1) \Big(\sum\limits_{R < \nu |m|}a_m^p\rho(\nu m)\Big)^\frac{1}{p}\Big(\sum\limits_{R < \nu |m|}\rho(\nu m)\Big)^\frac{1}{p'} \stackrel{\hbox{\tiny 0 is kind}}{\leq} \frac{\nu}{2}a_0,
}
since we also assumed~\eqref{add}. Thus,~\eqref{AlmostIt} is proved.

It remains to notice that~\eqref{AlmostIt} leads to~\eqref{Conc3} since~$\mu(B_{5\nu\sqrt{d}}(0)) \geq a_0$.
\end{proof}
We need to perturb Theorem~\ref{Robust} a little.
\begin{Cor}\label{RobustCorollary}
Let~$\MM$ be an invariant cone of measures that does not contain~$\delta_0$. Let~$p > 1$ be a fixed number. Let also the constants~$C_G$ and~$\theta_G$ be fixed. There exists~$\tilde \delta > 0$ such that for any sufficiently small~$t > 0$\textup, the following holds true. Let~$H$ be a solution of the heat equation on~$\R^d\times [t^2,1]$ such that~$H(\fdot,t^2) \in \MM$. Then\textup, the inequality
\eq{\label{eq350}
\|H(\fdot,t)\|_{L_p(\Heat[G](\fdot,\frac{1-t}{p}))} \leq t^{-\frac{d}{2}\frac{p-1}{p} + \tilde \delta} \|H(\fdot,1)\|_{L_p(G)}
}
is valid with any continuous positive weight~$G$ satisfying~\eqref{ReqOnWeight}\textup, provided the right hand side is finite.
\end{Cor}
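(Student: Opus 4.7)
\medskip
\noindent\textbf{Proof plan for Corollary~\ref{RobustCorollary}.}
The plan is to reduce the corollary to Theorem~\ref{Robust} via a space-time dilation that converts the situation ``heat extension initialized at time~$t^2$'' into the situation ``heat extension initialized at time~$0$''. Set~$\mu := H(\fdot,t^2)$, which by hypothesis belongs to~$\MM$; by the semigroup property,~$H(\fdot,s) = \Heat[\mu](\fdot,s-t^2)$ for all~$s\in [t^2,1]$. In particular,~$H(\fdot,t) = \Heat[\mu](\fdot,t-t^2)$ and~$H(\fdot,1) = \Heat[\mu](\fdot,1-t^2)$. So the claim~\eqref{eq350} is an assertion about a heat extension running over the time interval of length~$1-t^2$ rather than~$1$, which Theorem~\ref{Robust} does not directly cover.

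Introduce the rescaling factor~$\lambda := \sqrt{1-t^2}$ and the rescaled data
\[
\tilde\mu(x) := \mu(\lambda x),\qquad \tilde G(y) := G(\lambda y).
\]
Since~$\MM$ is dilation-invariant,~$\tilde\mu\in\MM$; since~$\lambda < 1$, Lemma~\ref{SmoothnessDil} gives~$\s[\tilde G]\leq \s[G]$, so~$\tilde G$ satisfies~\eqref{ReqOnWeight} with the very same constants~$C_G,\theta_G$. A direct change of variable in the heat-extension integral yields the scaling identities
\[
\Heat[\tilde\mu](x,\theta) = \Heat[\mu](\lambda x,\lambda^2\theta),\qquad \Heat[\tilde G](y,s)=\Heat[G](\lambda y,\lambda^2 s).
\]

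Now apply Theorem~\ref{Robust} to~$\tilde\mu$ and~$\tilde G$ at the auxiliary time~$\tilde t := t/(1+t)$. The key algebraic check is that~$\lambda^2\tilde t = (1-t^2)\cdot t/(1+t) = t(1-t) = t - t^2$ and~$\lambda^2(1-\tilde t)/p = (1-t)/p$, so the scaling identities above send the time~$\tilde t$ in the rescaled problem to the physical time~$t$, and match the weight profiles exactly. Changing variables~$y = \lambda x$ in both integrals produces a common Jacobian~$\lambda^{-d/p}$ that cancels, and we obtain
\[
\|H(\fdot,t)\|_{L_p(\Heat[G](\fdot,\frac{1-t}{p}))} \leq \tilde t^{-\frac{d}{2}\frac{p-1}{p}+\delta}\|H(\fdot,1)\|_{L_p(G)},
\]
where~$\delta>0$ is the constant furnished by Theorem~\ref{Robust} for~$\MM$,~$p$,~$C_G$,~$\theta_G$.

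It remains to transfer the exponent from~$\tilde t$ back to~$t$. Writing~$\tilde t = t/(1+t)$ we have
\[
\tilde t^{-\frac{d}{2}\frac{p-1}{p}+\delta} = t^{-\frac{d}{2}\frac{p-1}{p}+\delta}\,(1+t)^{\frac{d}{2}\frac{p-1}{p}-\delta}.
\]
Setting, e.g.,~$\tilde\delta := \delta/2$, the inequality~$(1+t)^{\frac{d}{2}\frac{p-1}{p}-\delta}\leq t^{-\delta/2}$ holds for all sufficiently small~$t>0$, yielding~\eqref{eq350}. There is no real obstacle here: the whole argument is a scaling identity, and the only point to verify carefully is that space dilation of~$\mu$ and~$G$ by the common factor~$\lambda$ respects both the cone membership (trivial) and the smoothness bound on the weight (Lemma~\ref{SmoothnessDil}, where it is crucial that~$\lambda<1$).
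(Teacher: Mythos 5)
Your argument is correct and is essentially the paper's own proof: both reduce Corollary~\ref{RobustCorollary} to Theorem~\ref{Robust} by shifting the time interval $[t^2,1]$ to start at $0$ (via the semigroup property) and rescaling space-time by $\lambda=\sqrt{1-t^2}$, arriving at the same intermediate factor $\bigl(\tfrac{t-t^2}{1-t^2}\bigr)^{-\frac{d}{2}\frac{p-1}{p}+\delta}=\bigl(\tfrac{t}{1+t}\bigr)^{-\frac{d}{2}\frac{p-1}{p}+\delta}$, and then absorbing the harmless $(1+t)$-power into $t^{\tilde\delta}$ with $\tilde\delta=\delta/2$ for small $t$. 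Your explicit verification that $\lambda<1$ preserves the smoothness bound via Lemma~\ref{SmoothnessDil} (so the constants $C_G,\theta_G$ are unchanged) is a slightly cleaner bookkeeping of the point the paper handles by allowing a "slightly worse constant."
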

\begin{proof}
Let us restate Theorem~\ref{Robust} in terms of the functions~$u$ and~$v$ generated by~\eqref{QpDef} with~$\mu$ and~$G$ in the roles of~$\mu$ and~$w$. It claims the inequality
\eq{
\|u(\fdot,t)\|_{L_p(v(\fdot, t))}\leq t^{-\frac{d}{2}\frac{p-1}{p} + \delta}\|u(\fdot,1)\|_{L_p(v(\fdot, 1))}, \quad t \in (0,1)
} 
for any functions~$u$ and~$v$ defined on the region~$\R^d\times (0,1)$,~$u$ solving the heat equation,~$v$ solving the backwards heat equation~\eqref{BackwardsHeat},  and such that~$u(\fdot,0) \in \MM$ and~$v(\fdot,1)$ satisfies~\eqref{ReqOnWeight}. We may freely shift the region~$\R^d \times (0,1)$ in any direction in~$\R^{d+1}$ without any changes to that statement. If we dilate the region~$\lambda$ times (making a change of variables in the style of~\eqref{DilationOfFunctions}), where~$\lambda$ is bounded away from zero and infinity to preserve~\eqref{ReqOnWeight} (with possibly slightly worse constant~$C_G$), we see that the inequality
\eq{
\|u(\fdot, t)\|_{L_p(v(\fdot,t))}\leq \Big(\frac{t-t_0}{t_1 - t_0}\Big)^{-\frac{d}{2}\frac{p-1}{p} + \delta}\|u(\fdot,t_1)\|_{L_p(v(\fdot, t_1))},\quad t\in (t_0,t_1),
}
holds true for any pair of functions~$u$ and~$v$ defined on the region~$\R^d\times [t_0, t_1]$, solving the same equations as usually, and such that~$u(\fdot,t_0) \in \MM$ and~$v(\fdot,t_1)$ satisfies~\eqref{ReqOnWeight}.

We plug~$t_0 := t^2$ and~$t_1 := 1$,~$u:= H$, and~$v(x,t):= \Heat[G](x,\frac{1-t}{p})$ into the latter statement to obtain
\eq{
\|H(\fdot,t)\|_{L_p(\Heat[G](\fdot,\frac{1-t}{p}))} \leq \Big(\frac{t-t^2}{1-t^2}\Big)^{-\frac{d}{2}\frac{p-1}{p} + \delta} \|H(\fdot,1)\|_{L_p(G)}.
}
Thus, to finish the proof, it remains to show
\eq{\label{eq353}
\Big(\frac{t-t^2}{1-t^2}\Big)^{-\frac{d}{2}\frac{p-1}{p} + \delta} \leq t^{-\frac{d}{2}\frac{p-1}{p} + \tilde \delta}
}
provided~$t$ is sufficiently small and we may choose arbitrarily small~$\tilde{\delta}$. We set~$\tilde \delta = \frac12 \delta$ and rewrite~\eqref{eq353} as
\eq{
(1+t)^{\frac{d}{2}\frac{p-1}{p} - \delta} \leq t^{-\frac12\delta}.
}
This inequality is true provided~$t$ is sufficiently small since the left hand side is continuous at zero while the right hand side blows up.
\end{proof}

\section{Time-frequency decomposition and control of convex atoms}\label{S4}
Recall that our main target is to prove~\eqref{Besov2}. We rewrite the right hand side as a telescopic sum
\eq{\label{telescope}
\|f\|_{W_\Omega^1} = \|f_0\|_{L_1} + \sum\limits_{k \geq 0}\big(\|f_{k+1}\|_{L_1} - \|f_k\|_{L_1}\big).
}
It is crucial that each summand is non-negative according to Corollary~\ref{UnweightedMonotonicityCor}. Due to technical reasons, we will be working with the sum
\eq{\label{Telescope3}
\sum\limits_{k \geq 0}\big(\|f_{k+3}\|_{L_1} - \|f_k\|_{L_1}\big),
}
which is bounded by~$3\|f\|_{L_1}$.

Let~$\theta_1 > d$ be a number we will specify later. Consider the weight~$w$ defined as
\eq{\label{Weightw}
w(x) = \frac{(1+|x|)^{-\theta_1}}{\sum\limits_{j\in\Z^d} (1+|x-j|)^{-\theta_1}}.
}
This weight satisfies the bounds
\eq{\label{BoundOnW}
c_w(1+|x|)^{-\theta_1} \leq w(x) \leq C_{w}(1+|x|)^{-\theta_1}, \quad x\in \R^d,
}
which, in particular, leads to~\eqref{ReqOnWeight} with~$\theta_G:= \theta_1$ and~$C_G := C_{w}/c_w$ (see Example~\ref{ExampleOfPolynom}). What is more, the shifts of~$w$ form a regular partition of unity:
\eq{\label{PartitionWeights}
\sum\limits_{j\in \Z^d}w(x-j) = 1,\quad x\in \R^d.
}

Consider now the partition of~$\R^d$ into~$A$-adic cubes. 
The cubes~$\{Q_{0,j}\}_{j}$ have centers in the lattice~$\Z^d$ and tile the whole space (up to a set of measure zero):
\eq{
Q_{0,j} = \Set{x\in \R^d}{|x-j|_{\ell_{\infty}^d} \leq \frac12},\qquad j\in \Z^d;
}
by the~$\ell_\infty^d$-norm we mean the standard~$\sup$-norm on~$\R^d$.
We form the collection~$\{Q_{k,j}\}_j$ dilating this family of cubes
\eq{
Q_{k,j} = \Set{x\in \R^d}{|A^kx - j|_{\ell_\infty^d} \leq \frac12},\qquad j\in \Z^d,\ k\geq 0.
}
Recall we assumed that~$A$ was odd. By virtue of this assumption, the family~$\{Q_{k,j}\}_{k,j}$ has a nice combinatorial property: any two cubes are either disjoint (up to a set of measure zero) or one of them contains the other.

We adjust the partition of unity~\eqref{PartitionWeights} to each scale:
\eq{
w_{k,j}(x) = w(A^kx - j), \qquad x\in\R^d,\ j\in \Z^d, k \geq 0.
}
These weights form a partition of unity for any fixed~$k$
\eq{
\sum\limits_{j\in\Z^d} w_{k,j} = 1,
}
and are regular in the sense that 
\eq{
c_w(1+|A^kx -j|)^{-\theta_1} \leq w_{k,j}(x) \leq C_{w}(1+|A^kx -j|)^{-\theta_1}, \qquad x\in \R^d;
}
in particular,
\eq{
\s[w_{k,j}](\zeta) \leq \frac{C_{w}}{c_w}(1+A^k\zeta)^{\theta_1},\qquad \zeta \geq 0.
}

The weights just introduced allow to split the sum~\eqref{Telescope3} further:
\eq{
\|f_{k+3}\|_{L_1} - \|f_k\|_{L_1} = \sum\limits_{j \in \Z^d}\Big(\|f_{k+3}\|_{L_1(\Heat[w_{k,j}](\fdot, A^{-2k} - A^{-2k-6}))} - \|f_k\|_{L_1(w_{k,j})}\Big)
}
since the weights~$\Heat[w_{k,j}](\fdot, A^{-2k} - A^{-2k-6})$ also form a partition of unity. By Lemma~\ref{FirstMonotonicityLem}, each summand in the above sum is non-negative. We have applied the said lemma with~$t := A^{-2k} - A^{-2k-6}$ and used~\eqref{fkfm}.
\begin{Def}
A pair~$(k,j)$\textup, where~$k \in \N\cup\{0\}$ and~$j \in \Z^d$\textup, is called an atom.
\end{Def}
Let~$\eps$ be a tiny parameter to be specified later.
\begin{Def}
An atom~$(k,j)$ is called~$\eps$-convex provided
\eq{
\|f_{k+3}\|_{L_1(\Heat[w_{k,j}](\fdot, A^{-2k} - A^{-2k-6}))} - \|f_k\|_{L_1(w_{k,j})} \geq \eps \|f_{k+3}\|_{L_1(\Heat[w_{k,j}](\fdot, A^{-2k} - A^{-2k-6}))}
}
and is called~$\eps$-flat in the case the above inequality is violated. The set of convex atoms is denoted by~$\Conv$\textup, the set of flat atoms is denoted by~$\Fl$.
\end{Def}
We will also say simply flat and convex suppressing the dependence on~$\eps$.
\begin{Rem}\label{AroundFlatness}
If~$(k,j)$ is~$\eps$-flat\textup, then
\mlt{
\|f_{k+2}\|_{L_1(\Heat[w_{k,j}](\fdot, A^{-2k} - A^{-2k-4}))} - \|f_k\|_{L_1(w_{k,j})} \Lref{\hbox{\tiny \textup{Lem.}}\scriptscriptstyle \ref{FirstMonotonicityLem}}\\ \|f_{k+3}\|_{L_1(\Heat[w_{k,j}](\fdot, A^{-2k} - A^{-2k-6}))} - \|f_k\|_{L_1(w_{k,j})} \leq\\ \frac{\eps}{1-\eps} \|f_k\|_{L_1(w_{k,j})} \Lref{\hbox{\tiny \textup{Lem.}}\scriptscriptstyle \ref{FirstMonotonicityLem}} \frac{\eps}{1-\eps}\|f_{k+2}\|_{L_1(\Heat[w_{k,j}](\fdot, A^{-2k} - A^{-2k-4}))}.
}
\end{Rem}
Convex atoms are easier to deal with, and we will soon prove "the half" of inequality~\eqref{Besov2} corresponding to convex atoms stated in the next proposition.
\begin{St}\label{ConvexControl}
For any~$\eps > 0$\textup, the inequality
\eq{
\sum\limits_{k\geq 0}A^{-\alpha k}\|f_k\|_{L_{p,1}(\!\!\!\!\bigcup\limits_{(k,j)\in \Conv}\!\!\!\! Q_{k,j})} \lesssim \|f\|_{L_1}
}
holds true. The constant in this inequality may depend on~$A, \eps,$ and~$\theta_1$.
\end{St}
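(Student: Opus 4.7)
The plan is to combine two ingredients: the convexity condition forces the per-atom increment $\|f_{k+3}\|_{L_1(\Heat[w_{k,j}](\cdot, A^{-2k}-A^{-2k-6}))} - \|f_k\|_{L_1(w_{k,j})}$ to be at least $\eps$ times the larger term, and a pointwise heat-smoothing estimate converts such weighted $L_1$-quantities on the cube $Q_{k,j}$ into $L_{p,1}$-norms at exactly the correct scale $A^{\alpha k}$.

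First, I would exploit convexity and telescope. Each per-atom increment is non-negative by Lemma~\ref{FirstMonotonicityLem}. For fixed $k$, the sum over $j\in\Z^d$ of these increments equals $\|f_{k+3}\|_{L_1}-\|f_k\|_{L_1}$, since $\sum_j w_{k,j}\equiv 1$ and hence $\sum_j\Heat[w_{k,j}](\cdot,t)\equiv 1$. Summing over $k\geq 0$ telescopes (with shift~$3$) to at most $3\|f\|_{L_1}$. The convexity definition rearranges to $\|f_{k+3}\|_{L_1(\Heat[w_{k,j}](\cdot,\ldots))}\leq \eps^{-1}\cdot(\text{increment})$ on any convex atom, so
\[
\sum_{(k,j)\in\Conv}\|f_{k+3}\|_{L_1(\Heat[w_{k,j}](\cdot, A^{-2k}-A^{-2k-6}))}\leq 3\eps^{-1}\|f\|_{L_1}.
\]

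Second, I would prove a pointwise heat-smoothing estimate. Using the semigroup identity $f_k=\Heat[f_{k+3}](\cdot, A^{-2k}-A^{-2k-6})$, for $x\in Q_{k,j}$,
\[
|f_k(x)|\leq \int |f_{k+3}(y)|\,g_{A^{-2k}(1-A^{-6})}(x-y)\,dy,
\]
and on $Q_{k,j}$ the Gaussian kernel is bounded by $CA^{kd}$ times a super-polynomial decay profile in $|A^ky-j|$. Since $w_{k,j}(y)\sim(1+|A^ky-j|)^{-\theta_1}$ and its heat extension over time $\sim A^{-2k}$ stays comparable to $w_{k,j}$ itself, the kernel is dominated by $C'A^{kd}\Heat[w_{k,j}](y, A^{-2k}-A^{-2k-6})$, yielding
\[
\|f_k\|_{L_\infty(Q_{k,j})}\lesssim A^{kd}\|f_{k+3}\|_{L_1(\Heat[w_{k,j}](\cdot, A^{-2k}-A^{-2k-6}))}.
\]
Combining with the trivial bound $\|g\|_{L_{p,1}(E)}\lesssim |E|^{1/p}\|g\|_{L_\infty(E)}$, together with $|Q_{k,j}|=A^{-kd}$ and $\alpha=d(p-1)/p$, delivers
\[
A^{-\alpha k}\|f_k\|_{L_{p,1}(Q_{k,j})}\lesssim\|f_{k+3}\|_{L_1(\Heat[w_{k,j}](\cdot, A^{-2k}-A^{-2k-6}))}.
\]

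Finally, I would invoke the triangle inequality in $L_{p,1}$, which is a genuine norm for $p>1$, over the essentially disjoint cubes at each fixed scale to obtain $\|f_k\|_{L_{p,1}(\bigcup_j Q_{k,j})}\leq\sum_j\|f_k\|_{L_{p,1}(Q_{k,j})}$; summing over $k$ and plugging in the first step finishes the argument. The main obstacle is the pointwise smoothing estimate of the second step: it is here that the choice $\theta_1>d$ of weight exponent in~\eqref{Weightw} is used, since it guarantees simultaneously that $w_{k,j}$ dominates the tail of the heat kernel at scale $A^{-k}$ and that its own heat extension over time $\sim A^{-2k}$ remains pointwise comparable to $w_{k,j}$. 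Everything else reduces to the triangle inequality together with the convexity and monotonicity machinery already established in Section~\ref{S2}.
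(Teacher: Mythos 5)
Your argument is correct and has the same architecture as the paper's proof: split the Lorentz norm over the disjoint cubes, establish the per-atom bound $A^{-\alpha k}\|f_k\|_{L_{p,1}(Q_{k,j})}\lesssim\|f_{k+3}\|_{L_1(\Heat[w_{k,j}](\fdot,\,A^{-2k}-A^{-2k-6}))}$, and then combine the definition of a convex atom with the partition of unity $\sum_j\Heat[w_{k,j}](\fdot,t)=1$ and the telescoping sum~\eqref{Telescope3}. The one place where you genuinely deviate is the per-atom bound. The paper obtains it from Corollary~\ref{L1LpFormulaRescaled}, i.e.\ from Lemma~\ref{L1Lp} (an $L_1(u)\to L_p(v)$ bound for the heat semigroup proved by testing on delta functions) upgraded to $L_{p,1}$ by real interpolation. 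You instead dominate the heat kernel pointwise, for $x\in Q_{k,j}$, by $A^{kd}$ times $\Heat[w_{k,j}](\fdot,\,A^{-2k}-A^{-2k-6})$ (using the lower bound of Lemma~\ref{HeatingWeights} to absorb the Gaussian tail into the weight), deduce an $L_\infty(Q_{k,j})$ bound, and then use $\|g\|_{L_{p,1}(E)}\le p\,|E|^{1/p}\|g\|_{L_\infty(E)}$ with $|Q_{k,j}|=A^{-kd}$, which yields exactly the factor $A^{\alpha k}=A^{kd(p-1)/p}$. This is more elementary: it avoids interpolation entirely by exploiting that the target set is a single cube of the right measure, whereas the paper's corollary is formulated for general weights $v$ so that it can be reused elsewhere. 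One precision point: the quantity~\eqref{LorentzDef} is not literally a norm, so "triangle inequality in $L_{p,1}$" is not the right justification for $\|f_k\|_{L_{p,1}(\cup_j Q_{k,j})}\le\sum_j\|f_k\|_{L_{p,1}(Q_{k,j})}$; for pairwise disjoint sets, however, this holds with constant one by the subadditivity of $t\mapsto t^{1/p}$, which is exactly Lemma~\ref{Split}, so nothing is lost.
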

The proof of Proposition~\ref{ConvexControl} needs some preparation, it is based upon three useful lemmas. The proof is situated at the end of the present section.
\begin{Le}\label{Split}
Let~$p,q\in [1,\infty)$ and let~$\{\Omega_j\}_j$ be a collection of measurable sets in~$\R^d$. Assume they all have non-zero measure and are disjoint up to sets of measure zero. Then\textup,
\eq{
\|g\|_{L_{p,q}(\cup_j \Omega_j)} \leq \sum\limits_j \|g\|_{L_{p,q}(\Omega_j)}
}
for any function~$g$ \textup($g$ may be vector-valued here\textup).
\end{Le}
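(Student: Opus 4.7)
The statement is essentially the triangle inequality for $L_{p,q}$ applied to functions with pairwise disjoint supports. The plan is as follows. Since the $\Omega_j$ are disjoint up to null sets, one can write, pointwise almost everywhere,
\[
g\chi_{\cup_j \Omega_j} = \sum_j g\chi_{\Omega_j},
\]
so that $\|g\|_{L_{p,q}(\cup_j \Omega_j)} = \bigl\|\sum_j g\chi_{\Omega_j}\bigr\|_{L_{p,q}(\R^d)}$, and $\|g\chi_{\Omega_j}\|_{L_{p,q}(\R^d)} = \|g\|_{L_{p,q}(\Omega_j)}$. The goal then reduces to establishing the triangle inequality
\[
\Big\|\sum_j g\chi_{\Omega_j}\Big\|_{L_{p,q}} \leq \sum_j \|g\chi_{\Omega_j}\|_{L_{p,q}}
\]
for the Lorentz (semi-)norm recalled in \eqref{LorentzDef}.

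For $p,q \in [1,\infty)$ with $p>1$, this triangle inequality is classical: it is valid for the equivalent norm obtained by replacing the decreasing rearrangement $g^*$ with its maximal function $g^{**}(t) = \frac{1}{t}\int_0^t g^*(s)\,ds$ (see, e.g., the standard treatments in \cite{Stein1970}). Since in the theorem statements we care about we will always have $p=\frac{d}{d-\alpha}>1$, this is exactly the regime needed; the borderline case $p=1$ only occurs with $q=1$, where $L_{1,1}=L_1$ and the triangle inequality is trivial. First I would record these two cases, then apply the triangle inequality to finite partial sums
\[
\Big\|\sum_{j\in F} g\chi_{\Omega_j}\Big\|_{L_{p,q}} \leq \sum_{j\in F}\|g\chi_{\Omega_j}\|_{L_{p,q}}, \qquad F\ \text{finite}.
\]

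The last step is a limit argument. If $\sum_j \|g\|_{L_{p,q}(\Omega_j)} = \infty$, the inequality is trivial, so assume finiteness. Enumerate the index set (at most countable, since otherwise the left-hand side is a.e. supported on a $\sigma$-finite union anyway) and let $F_n \uparrow \{j\}$. Then $g\chi_{\cup_{j\in F_n}\Omega_j} \to g\chi_{\cup_j \Omega_j}$ monotonically in absolute value, so by Fatou's lemma for the decreasing rearrangement (which respects monotone convergence of $|g_n|$), the left-hand side passes to the limit, and the inequality extends from the finite case.

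The only point that requires some care is the justification of the triangle inequality on the Lorentz (quasi-)norm \eqref{LorentzDef}; depending on the precise normalization chosen in the paper, one either uses the definition directly (if it is already given in normed form) or pays an irrelevant constant and notes that the vector-valued case (for $g$ taking values in $\R^\ell$) reduces to the scalar case by $|g| = \bigl(\sum_i |g_i|^2\bigr)^{1/2}$ and monotonicity of $\|\cdot\|_{L_{p,q}}$ in $|g|$. This is the only potentially delicate point; everything else is routine manipulation with rearrangements and disjoint supports.
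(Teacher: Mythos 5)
Your reduction to the general triangle inequality in $L_{p,q}$ is where the argument, as written, falls short of the statement. The lemma claims the inequality with constant exactly $1$, for all $p,q\in[1,\infty)$, and for the specific quasi-norm \eqref{LorentzDef} built from the distribution function; the paper even flags that the normalization is fixed because a sharp inequality is wanted. The route you propose --- invoking the triangle inequality for the equivalent norm defined through $g^{**}$ --- only yields $\|g\|_{L_{p,q}(\cup_j\Omega_j)}\leq C_{p,q}\sum_j\|g\|_{L_{p,q}(\Omega_j)}$ with a constant coming from the equivalence of the two functionals, and it breaks down entirely when $p=1$ and $q>1$, a case the lemma covers but your proof dismisses (the space $L_{1,q}$ is not even normable there, yet the disjoint-support inequality still holds). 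You acknowledge this as ``the only potentially delicate point'' but leave it unresolved; the fact that \eqref{LorentzDef} itself satisfies the triangle inequality when $q\leq p$ is true, but it is a nontrivial statement (Hardy--Littlewood rearrangement inequality plus Hardy's lemma) that you would still have to prove, and it is in any case strictly harder than the lemma at hand.

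What you miss is that disjointness of the supports lets you bypass the triangle inequality altogether, which is exactly what the paper does. For each fixed $t>0$ the distribution functions are literally additive: $|\{x\in\cup_j\Omega_j : |g(x)|\geq t\}| = \sum_j|\{x\in\Omega_j : |g(x)|\geq t\}|$. Raising to the power $\frac1p\leq 1$ and using the subadditivity of $s\mapsto s^{1/p}$ gives $|\{x\in\cup_j\Omega_j:|g(x)|\geq t\}|^{1/p}\leq\sum_j|\{x\in\Omega_j:|g(x)|\geq t\}|^{1/p}$ pointwise in $t$, and then the ordinary Minkowski inequality in $L_q(\R_+,\frac{dt}{t})$ applied to \eqref{LorentzDef} finishes the proof. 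This is shorter, gives constant $1$, works for every $p,q\in[1,\infty)$ including $p=1<q$, handles vector-valued $g$ automatically (only $|g|$ enters the definition), and needs no exhaustion by finite subfamilies.
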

Since we are going to prove a sharp inequality, we need to specify our choice of the Lorentz quasi-norm:
\eq{\label{LorentzDef}
\|h\|_{L_{p,q}(\Omega)} = p^{\frac{1}{q}} \Big\|t |\set{x\in \Omega}{|h(x)| \geq t}|^{\frac{1}{p}}\Big\|_{L_q(\R_+,\frac{dt}{t})},
}
where the absolute value of a set is its Lebesgue measure; here~$\Omega$ is a Borel set of positive measure. Note that the expression above is not necessarily a norm.
\begin{proof}[Proof of Lemma~\ref{Split}]
Fix some~$t > 0$. We start with the identity
\eq{
\Big|\Set{x\in \bigcup\limits_j\Omega_j}{|g(x)|\geq t}\Big| = \sum\limits_j |\set{x\in \Omega_j}{|g(x)|\geq t}|,
}
which implies the inequality
\eq{
\Big|\Set{x\in \bigcup\limits_j\Omega_j}{|g(x)|\geq t}\Big|^{\frac{1}{p}} \leq \sum\limits_j |\set{x\in \Omega_j}{|g(x)|\geq t}|^{\frac1p}.
}
It remains to recall the definition~\eqref{LorentzDef} and use the triangle inequality in~$L_q$.
\end{proof}
\begin{Le}\label{HeatingWeights}
Let~$G$ be a weight on~$\R^d$ that satisfies the estimates
\eq{\label{BiIneq1}
c_G(1+|x|)^{-\theta_G} \leq G(x) \leq C_{G}(1+|x|)^{-\theta_G}, \quad x\in \R^d,
}
for some~$\theta_G > d$. Then\textup, there are constants~$\tilde c_G$ and~$\Tilde C_G$ that do not depend on~$G$ itself\textup, but only on~$\theta_G, c_G$\textup, and~$C_G$\textup, such that the estimate
\eq{\label{BiIneq2}
\tilde c_G(1+|x|)^{-\theta_G} \leq \Heat[G](x,t) \leq \tilde C_{G}(1+|x|)^{-\theta_G}, \quad x\in \R^d,
}
holds true for any~$t\in [0,2]$.
\end{Le}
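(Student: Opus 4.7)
\textbf{Proof plan for Lemma~\ref{HeatingWeights}.}

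The strategy is to compare $\Heat[G](x,t)=(4\pi t)^{-d/2}\!\int G(x-y)e^{-|y|^2/(4t)}\,dy$ with the target bound $(1+|x|)^{-\theta_G}$ by splitting the convolution into a near region and a far region, using the polynomial decay of $G$ in the near region and the super-polynomial decay of the Gaussian in the far region. At $t=0$ the bounds reduce to~\eqref{BiIneq1}, so the whole argument concerns $t\in(0,2]$, with all constants chosen uniformly in $t$ over this range.

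For the \emph{upper bound}, I would split the integration variable into $|y|\le (1+|x|)/2$ and $|y|>(1+|x|)/2$. On the near region, the triangle inequality gives $1+|x-y|\ge (1+|x|)/2$, hence $G(x-y)\le C_G 2^{\theta_G}(1+|x|)^{-\theta_G}$, and since the Gaussian integrates to at most one, this contribution is dominated by a constant multiple of $(1+|x|)^{-\theta_G}$. On the far region I would use the trivial bound $G\le C_G$, and change variables $z=y/\sqrt{2t}$ to recognize the tail integral as $\mathrm{Pr}(|Z|>(1+|x|)/(2\sqrt{2t}))$ for a standard Gaussian $Z$ in $\R^d$; since $t\le 2$, this probability is bounded by $\mathrm{Pr}(|Z|>(1+|x|)/4)$, which in turn is $\le C_{\theta_G}(1+|x|)^{-\theta_G}$ because Gaussian tails beat any polynomial. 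Summing the two pieces yields the required upper estimate with $\tilde C_G$ depending only on $d,C_G,\theta_G$.

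For the \emph{lower bound}, the natural move is to integrate only over a small ball where both $G$ and the Gaussian are essentially constant. I would use the region $|y|\le \sqrt{t}$ (so $|y|\le \sqrt{2}$ on the range of $t$). There, $1+|x-y|\le 1+|x|+\sqrt{2}\lesssim 1+|x|$, hence $G(x-y)\ge c_G(1+|x|+\sqrt{2})^{-\theta_G}\gtrsim c_G(1+|x|)^{-\theta_G}$. Meanwhile, the change of variables $z=y/\sqrt{t}$ gives $\int_{|y|\le\sqrt{t}}(4\pi t)^{-d/2}e^{-|y|^2/(4t)}\,dy=\int_{|z|\le 1}(4\pi)^{-d/2}e^{-|z|^2/4}\,dz$, a strictly positive constant depending only on $d$. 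Multiplying the two estimates gives the desired lower bound with $\tilde c_G$ depending only on $d,c_G,\theta_G$.

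No step is really hard, so no single step stands out as an obstacle; the only subtlety is choosing a splitting radius that works uniformly over $t\in[0,2]$, which the choices $(1+|x|)/2$ (upper bound) and $\sqrt{t}$ (lower bound) handle cleanly. The final constants $\tilde c_G,\tilde C_G$ are universal functions of $d,\theta_G,c_G,C_G$ only, as claimed.
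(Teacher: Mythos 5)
Your argument is correct, and the lower bound is essentially the paper's own: integrate only over $B_{\sqrt t}(0)$, where the normalized Gaussian has mass bounded below independently of $t$ and where $1+|x-y|\lesssim 1+|x|$ because $t\le 2$. For the upper bound you take a slightly different route. The paper keeps the estimate $(1+|x-y|)^{-\theta_G}\le (1+|x|)^{-\theta_G}\s[(1+|\cdot|)^{-\theta_G}](|y|)$ everywhere and decomposes the $y$-integral into the ball $B_{\sqrt t}(0)$ and dyadic annuli $2^k\sqrt t\le |y|<2^{k+1}\sqrt t$, summing a convergent series $(1+2^{k+1}\sqrt t)^{\theta_G}(2^{k+1}\sqrt t)^d e^{-2^{2k-2}}$; the splitting radius is tied to $t$ and the smoothness-function machinery is reused. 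You instead split at the $t$-independent radius $|y|=(1+|x|)/2$: on the near region $1+|x-y|\ge(1+|x|)/2$ so the polynomial decay of $G$ alone suffices (the kernel has total mass one), and on the far region the crude bound $G\le C_G$ plus the super-polynomial decay of the Gaussian tail beyond $(1+|x|)/4$ (uniform for $t\le 2$) gives the same power of $(1+|x|)$. Both yield constants depending only on $d,\theta_G,c_G,C_G$; your version avoids the dyadic summation and the smoothness function at the cost of invoking a Gaussian tail bound, and is, if anything, slightly more elementary. No gaps.
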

\begin{proof}
Let us first prove the estimate from below:
\mlt{
\Heat[G](x,t) = (4\pi t)^{-\frac{d}{2}}\int\limits_{\R^d}G(x-y)e^{-\frac{|y|^2}{4t}}\,dy \geq c_G (4\pi t)^{-\frac{d}{2}}\int\limits_{\R^d}(1+|x-y|)^{-\theta_G}e^{-\frac{|y|^2}{4t}}\,dy \geq\\
c_G (4\pi t)^{-\frac{d}{2}}\int\limits_{B_{\sqrt{t}}(0)}(1+|x-y|)^{-\theta_G}e^{-\frac{|y|^2}{4t}}\,dy \stackrel{\scriptscriptstyle t \leq 2}{\gtrsim} t^{-\frac d2} (1+|x|)^{-\theta_G}\!\!\!\!\int\limits_{B_{\sqrt{t}}(0)}\!\!\!dy \gtrsim (1+|x|)^{-\theta_G}.
}
Let us prove the estimate from above:
\mlt{
\Heat[G](x,t) = (4\pi t)^{-\frac{d}{2}}\int\limits_{\R^d}G(x-y)e^{-\frac{|y|^2}{4t}}\,dy \leq C_G (4\pi t)^{-\frac{d}{2}}\int\limits_{\R^d}(1+|x-y|)^{-\theta_G}e^{-\frac{|y|^2}{4t}}\,dy \stackrel{\scriptscriptstyle t \leq 2}{\leq} \\
C_G(4\pi t)^{-\frac d2} (1+|x|)^{-\theta_G}\Big(\s[(1+|\cdot|)^{-\theta_G}](\sqrt2)|B_{\sqrt{t}}(0)|+ \sum\limits_{k \geq 0}\s[(1+|\cdot|)^{-\theta_1}](2^{k+1}\sqrt{t})\!\!\!\!\!\!\!\!\!\!\int\limits_{2^k\sqrt{t} \leq |y| < 2^{k+1} \sqrt{t}}\!\!\!\!\!\!\!\!\!\! e^{-\frac{|y|^2}{4t}} \,dy\Big) \Lsref{{\hbox{\tiny Ex. \ref{ExampleOfPolynom}}}} \\
t^{-\frac d2} (1+|x|)^{-\theta_G}(t^{\frac d2}+ \sum\limits_{k \geq 0}  (1+2^{k+1}\sqrt{t})^{\theta_G} (2^{k+1}\sqrt{t})^d e^{-2^{2k-2}}\Big) \Lsref{\scriptscriptstyle t < 2} (1+|x|)^{-\theta_G}.
}
\end{proof}
\begin{Rem}\label{SeparateRemark}
As it follows from the proof\textup, the first inequality in~\eqref{BiIneq1} implies the first inequality in~\eqref{BiIneq2}\textup, whereas the second inequality in~\eqref{BiIneq1} implies the second in~\eqref{BiIneq2}.
\end{Rem}
\begin{Le}\label{L1Lp}
Let~$\theta_u$ and~$\theta_v$ be two constants larger than~$d$. Let the weights~$u$ and~$v$ satisfy the inequalities
\alg{
\label{uBound}u(x) \geq c_u(1+|x|)^{-\theta_u},\quad x\in \R^d;\\
v(x) \leq C_v(1+|x|)^{-\theta_v},\quad x\in \R^d.
}
Let~$p \in [1,2]$. Assume~$\theta_v \geq p\theta_u$. Then\textup, for any~$s\in [\frac12,2]$ and any~$f\in L_1(u)$\textup, the inequality
\eq{\label{L1LpFormula}
\|\Heat[f](\fdot,s)\|_{L_p(v)} \lesssim \|f\|_{L_1(u)}
}
holds true with the constant independent of~$s,u$\textup, and~$v$\textup; the constant\textup, however\textup, may depend on~$\theta_u,\theta_v, p, c_u$\textup, and~$C_v$.
\end{Le}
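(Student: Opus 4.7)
The natural route is to write the heat extension as a convolution with the Gaussian kernel
\[
K_s(x) = (4\pi s)^{-\frac{d}{2}} e^{-\frac{|x|^2}{4s}},\qquad \Heat[f](x,s) = \int_{\R^d} K_s(x-y) f(y)\,dy,
\]
and apply Minkowski's integral inequality against the measure $v(x)\,dx$. This gives
\[
\|\Heat[f](\fdot,s)\|_{L_p(v)} \leq \int_{\R^d}|f(y)|\,\|K_s(\fdot - y)\|_{L_p(v)}\,dy,
\]
so it suffices to establish the pointwise kernel bound $\|K_s(\fdot-y)\|_{L_p(v)} \lesssim (1+|y|)^{-\theta_u}$, uniformly in $s\in[1/2,2]$. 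Combined with the lower bound~\eqref{uBound} on $u$, this immediately upgrades to $\|K_s(\fdot-y)\|_{L_p(v)} \lesssim u(y)$, and the desired estimate~\eqref{L1LpFormula} follows.

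To control the weighted $L_p$ norm of the Gaussian, I would use the elementary identity $K_s(x)^p = p^{-d/2}(4\pi s)^{-d(p-1)/2} K_{s/p}(x)$, which rewrites
\[
\int_{\R^d} K_s(x-y)^p\, v(x)\,dx = p^{-d/2}(4\pi s)^{-\frac{d(p-1)}{2}}\,\Heat[v]\Bigl(y, \tfrac{s}{p}\Bigr).
\]
For $p\in[1,2]$ and $s\in[1/2,2]$, the rescaled time parameter $s/p$ lies in $[1/4,2]$, and the prefactor is bounded above and below by constants depending only on $p$ and $d$. Applying Lemma~\ref{HeatingWeights} to the weight $v$ — or, more precisely, the one-sided version recorded in Remark~\ref{SeparateRemark}, since only the upper bound on $v$ is available — yields $\Heat[v](y, s/p) \lesssim (1+|y|)^{-\theta_v}$. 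Taking $p$-th roots gives
\[
\|K_s(\fdot - y)\|_{L_p(v)} \lesssim (1+|y|)^{-\theta_v/p},
\]
and the hypothesis $\theta_v \geq p\theta_u$ then allows us to absorb this into $(1+|y|)^{-\theta_u}$, completing the proof.

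No step is genuinely hard; the argument is an almost mechanical chain of Minkowski's inequality followed by the already-proved heat-kernel estimate from Lemma~\ref{HeatingWeights}. The only care needed is bookkeeping with exponents: tracking where the condition $\theta_v \geq p\theta_u$ is consumed (exactly in the passage from the $L_p$-norm of $K_s^p \cdot v$ to polynomial decay at rate $\theta_u$), and verifying that for the admissible range of $p$ and $s$ the rescaled time $s/p$ stays inside the interval $[0,2]$ to which Lemma~\ref{HeatingWeights} applies with uniform constants. The restriction $p\leq 2$ is used only to keep $s/p$ bounded below when $s=1/2$; the lower bound on $u$ and upper bound on $v$ are each used once, matching the dependence on $c_u$ and $C_v$ claimed in the statement.
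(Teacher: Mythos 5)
Your proposal is correct and is essentially the paper's own argument: the paper's opening step ``it suffices to test the inequality against $f=\delta_{x_0}$'' is exactly your Minkowski's-integral-inequality reduction made implicit, and the ensuing kernel computation (rewriting $K_s^p$ as a constant times $K_{s/p}$, invoking the upper-bound half of Lemma~\ref{HeatingWeights} via Remark~\ref{SeparateRemark}, and consuming $\theta_v\geq p\theta_u$ at the very end) matches the paper line for line. The only quibble is your closing remark that $p\leq 2$ is needed to keep $s/p$ bounded below --- Lemma~\ref{HeatingWeights} is uniform over all of $[0,2]$, so nothing would break for smaller $s/p$; this is immaterial to the proof.
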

\begin{proof}
It suffices to test the inequality against~$f = \delta_{x_0}$. In this case, the right hand side of~\eqref{L1LpFormula} equals~$u(x_0)$, whereas the left hand side is
\mlt{
\|\Heat[\delta_{x_0}](\fdot,s)\|_{L_p(v)} = \Big(\int\limits_{\R^d}(4\pi s)^{-\frac{dp}{2}}e^{-\frac{p|x-x_0|^2}{4s}}v(x)\,dx\Big)^{\frac1p}\stackrel{\scriptscriptstyle \frac12 \leq s \leq 2}{\lesssim}\\ C_v^\frac{1}{p}\Big(\Heat[(1+|\cdot|)^{-\theta_v}]\big(x_0, \frac{s}{p}\big)\Big)^{\frac1p} \Lsref{\hbox{\tiny Rem. \ref{SeparateRemark}}} (1+|x_0|)^{-\frac{\theta_v}{p}}.
} 
Therefore, the inequality under consideration is reduced to~$(1+|x_0|)^{-\frac{\theta_v}{p}} \lesssim u(x_0)$, which is true by~\eqref{uBound} and the requirement~$p\theta_u\leq \theta_v$.
\end{proof}
We need to introduce the space~$L_{p,1}(v)$. The norm in this space is defined as
\eq{
\|h\|_{L_{p,1}(v)} = p^{} \Big\|\Big(\int\limits_{\Omega_t} v(x)\,dx\Big)^{\frac{1}{p}}\Big\|_{L_1(\R_+)},\qquad \Omega_t = \set{x\in \R^d}{|h(x)| \geq t},\ t > 0.
}
Note that this agrees with~\eqref{LorentzDef} if~$v = \chi_\Omega$ (usually we prefer to work with continuous weights, so we give two definitions).
\begin{Cor}
Standard interpolation theory implies that if~$\theta_v > p\theta_u$ in the notation and assumptions of the previous lemma\textup, then
 \eq{
\|\Heat[f](\fdot,s)\|_{L_{p,1}(v)} \lesssim \|f\|_{L_1(u)},\qquad s\in\big[\frac12,2\big].
}
\end{Cor}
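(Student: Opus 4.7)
The plan is to deduce the Lorentz estimate from two plain Lebesgue endpoint estimates supplied by Lemma~\ref{L1Lp} via the real method of interpolation. We may assume $p > 1$, since for $p=1$ the corollary reduces to the lemma itself. Because $\theta_v > p\theta_u$ and $p < 2$, there is slack to choose exponents with $1 \leq p_0 < p < p_1 \leq 2$ and $p_i\theta_u \leq \theta_v$ for $i = 0, 1$; for instance, $p_0 := 1$ and $p_1 := \min\{2,\, \frac12(p + \theta_v/\theta_u)\}$. Applying Lemma~\ref{L1Lp} at both exponents yields
\eq{
\|\Heat[f](\fdot, s)\|_{L_{p_i}(v)} \lesssim \|f\|_{L_1(u)}, \qquad s \in \big[\tfrac12, 2\big],\ i = 0,1,
}
with constants uniform in $s$.

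Next, I would interpolate only the target space, keeping the domain $L_1(u)$ fixed. The map $T_s \colon f \mapsto \Heat[f](\fdot, s)$ is linear, so for any $f \in L_1(u)$ the trivial decompositions $T_s f = T_s f + 0 = 0 + T_s f$ in $L_{p_0}(v) + L_{p_1}(v)$ give the $K$-functional bound
\eq{
K\big(t, T_sf;\, L_{p_0}(v), L_{p_1}(v)\big) \lesssim \min(1, t)\,\|f\|_{L_1(u)}.
}
Integrating the right-hand side against $t^{-\theta}\,dt/t$ for any $\theta \in (0,1)$ yields
\eq{
T_s \colon L_1(u) \to \big(L_{p_0}(v), L_{p_1}(v)\big)_{\theta, 1}
}
boundedly, with norm dominated by a geometric mean of the two endpoint norms and therefore still uniform in $s$.

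Finally, I would choose $\theta \in (0,1)$ so that $\frac{1}{p} = \frac{1-\theta}{p_0} + \frac{\theta}{p_1}$ and invoke the classical identification
\eq{
\big(L_{p_0}(v), L_{p_1}(v)\big)_{\theta, 1} = L_{p,1}(v),
}
which is the standard real interpolation identity for Lebesgue spaces with respect to the $\sigma$-finite measure $v(x)\,dx$. This matches the definition of $L_{p,1}(v)$ given just above the corollary, since $\int_{\Omega_t} v(x)\,dx$ is exactly the $v\,dx$-measure of the superlevel set $\Omega_t$. Combining the displays completes the argument. There is no genuine obstacle here; the only points that require attention are checking that $p_0, p_1 \in [1,2]$, which is guaranteed by the strict inequality $\theta_v > p\theta_u$ together with $p \in (1,2]$, and confirming that the Lorentz quasi-norm defined in the paper coincides with the one appearing in the interpolation identity, which it does by inspection.
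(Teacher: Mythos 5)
Your argument is precisely the ``standard interpolation theory'' the paper invokes without detail: two Lebesgue endpoints $L_1(u)\to L_{p_0}(v)$, $L_1(u)\to L_{p_1}(v)$ from Lemma~\ref{L1Lp}, the $K$-functional bound $K(t,T_sf)\lesssim\min(1,t)\|f\|_{L_1(u)}$, and the identity $(L_{p_0}(v),L_{p_1}(v))_{\theta,1}=L_{p,1}(v)$ for the measure $v\,dx$, with uniformity in $s$ inherited from the endpoints; this is correct and matches the paper's (implicit) proof. The only inessential caveat is the boundary case $p=2$, where your choice forces $p_1=p$; since the proof of Lemma~\ref{L1Lp} (testing on $\delta_{x_0}$) never actually uses $p\le 2$, one may take $p_1$ slightly larger than $2$, and in any event the paper only applies the corollary with $p<2$.
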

Sometimes we will need to keep track of the constants in our inequalities. In fact, only the dependence on~$A$ is of crucial importance (there will be an exception in Section~\ref{S7} far below). We will write~$\lesssim_A$ to signify that the constant hidden inside the symbol~$\lesssim$ is independent of~$A$. To avoid ambiguity, we will usually comment on the said independence.  
\begin{Cor}\label{L1LpFormulaRescaled}
Let~$(k,j)$ be an atom. Let the weights~$u_{k,j}$ and~$v_{k,j}$ satisfy the bounds
\alg{
\label{uBoundRescaled} u_{k,j}(x) \geq c_u(1+|A^kx -j|)^{-\theta_u},\quad x\in \R^d;\\
v_{k,j}(x) \leq C_v(1+|A^kx -j|)^{-\theta_v},\quad x\in \R^d.
}
Let also~$\theta_v > p\theta_u$ and~$p \leq 2$. The inequality 
\eq{
A^{-\frac{d(p-1)}{p} k}\|\Heat[f](\fdot,s)\|_{L_{p,1}(v_{k,j})} \lesssim_A \|f\|_{L_1(u_{k,j})}
}
holds true whenever~$s\in [\frac 12 A^{-2k},2A^{-2k}]$. The constant in the inequality is uniform with respect to the parameters~$s, A, k, j, u, v,$ and~$f$\textup; however\textup, it might depend on~$p, \theta_u, \theta_v, c_u$\textup, and~$C_v$.
\end{Cor}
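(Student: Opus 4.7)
The plan is to reduce the statement to the unscaled version (Lemma \ref{L1Lp} and its Lorentz corollary) via a dilation-translation change of variables that sends the atom $(k,j)$ to $(0,0)$. Concretely, I would introduce
\eq{
\tilde f(y) = A^{-kd} f\big(A^{-k}(y+j)\big),\quad \tilde u(y) = u_{k,j}\big(A^{-k}(y+j)\big),\quad \tilde v(y) = v_{k,j}\big(A^{-k}(y+j)\big),
}
and $\tilde s := A^{2k} s$. The assumption $s \in [\tfrac12 A^{-2k}, 2 A^{-2k}]$ is exactly $\tilde s \in [\tfrac12, 2]$, and the bounds \eqref{uBoundRescaled} translate, without any change in the constants $c_u$ and $C_v$, into
\eq{
\tilde u(y) \geq c_u(1+|y|)^{-\theta_u},\qquad \tilde v(y) \leq C_v(1+|y|)^{-\theta_v},
}
which are precisely the hypotheses on $u,v$ in Lemma~\ref{L1Lp}. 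The hypotheses $\theta_v > p\theta_u$ and $p \leq 2$ are of course preserved.

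The next step is a routine change of variables in the heat kernel integral: substituting $w = A^{-k}(z+j)$ in $\Heat[\tilde f](y,\tilde s)$ yields the identity
\eq{
\Heat[\tilde f](y, \tilde s) = A^{-kd}\,\Heat[f]\big(A^{-k}(y+j),\, A^{-2k}\tilde s\big) = A^{-kd}\,\Heat[f]\big(A^{-k}(y+j),\, s\big).
}
I then track the effect of the change of variables on the two norms. For the right-hand side, $\|\tilde f\|_{L_1(\tilde u)} = \|f\|_{L_1(u_{k,j})}$ is immediate (the Jacobian $A^{kd}$ from $dy$ cancels the $A^{-kd}$ in the definition of $\tilde f$). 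For the Lorentz norm with weight, using the definition~\eqref{LorentzDef} applied to a general weight, one computes $|\{|\Heat[\tilde f](\fdot,\tilde s)|\geq t\}|$ relative to $\tilde v$ by pushing forward along $y \mapsto A^{-k}(y+j)$; the Jacobian contributes $A^{kd/p}$ to the outer integrand while the scalar factor $A^{-kd}$ inside $\Heat[\tilde f]$ produces a factor of $A^{-kd}$ after the substitution $u = t/A^{-kd}$. The two factors combine to give
\eq{
\big\|\Heat[\tilde f](\fdot, \tilde s)\big\|_{L_{p,1}(\tilde v)} = A^{-\frac{d(p-1)}{p} k}\,\big\|\Heat[f](\fdot, s)\big\|_{L_{p,1}(v_{k,j})}.
}

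Finally, I apply the Lorentz interpolation corollary of Lemma~\ref{L1Lp} (stated just before the corollary under consideration) to the triple $(\tilde f, \tilde u, \tilde v)$ at level $\tilde s \in [\tfrac12, 2]$, obtaining $\|\Heat[\tilde f](\fdot,\tilde s)\|_{L_{p,1}(\tilde v)} \lesssim \|\tilde f\|_{L_1(\tilde u)}$ with an implicit constant depending only on $p,\theta_u,\theta_v,c_u,C_v$ and in particular independent of $A$, $k$, $j$. Combining the previous two displays yields exactly the claimed inequality, with an $A$-independent constant (hence the $\lesssim_A$). There is no real analytic obstacle here — the entire content is in Lemma~\ref{L1Lp}; the main care is bookkeeping the powers of $A$ generated by the simultaneous spatial dilation by $A^{-k}$ and rescaling of time by $A^{-2k}$, together with the weighted Lorentz quasi-norm transformation under a dilation.
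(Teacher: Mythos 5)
Your proposal is correct and is exactly the intended argument: the paper states this as a corollary of the Lorentz-interpolated form of Lemma~\ref{L1Lp} with no written proof, the implicit content being precisely the dilation--translation reduction to the atom $(0,0)$ at unit scale that you carry out. Your bookkeeping of the powers of $A$ (the factor $A^{-kd}$ in $\tilde f$, the identity $\Heat[\tilde f](y,\tilde s)=A^{-kd}\Heat[f](A^{-k}(y+j),s)$, and the net factor $A^{-\frac{d(p-1)}{p}k}$ in the weighted Lorentz quasi-norm) checks out.
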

The last corollary will be needed only in Section~\ref{S6}. It is more convenient to present it here. Recall~$\alpha = \frac{d(p-1)}{p}$.
\begin{Cor}\label{EarlyCor}
Let~$p \leq 2$. The inequality
\eq{
\|f_1\|_{L_p(Q_{0,i})} \lesssim_A A^{\alpha} \|f_2\|_{L_1(\Heat[w_{0,i}](\fdot,1-A^{-4}))}
}
holds true for all~$f$\textup,~$i \in \Z^d$\textup, and all~$A > 2$ uniformly \textup(the constants may depend on~$\theta_1$\textup).
\end{Cor}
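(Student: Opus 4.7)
The plan is to write $f_1 = \Heat[f_2](\fdot,t)$ with $t = A^{-2}-A^{-4} \in [A^{-2}/2, A^{-2}]$ (valid for $A>\sqrt 2$) and reduce the claim to two ingredients: the $L_1\to L_p$ mapping property of the heat semigroup, which supplies the correct power of $A$ via Young's inequality, and a uniform lower bound $\Heat[w_{0,i}](y,1-A^{-4}) \gtrsim (1+|y-i|)^{-\theta_1}$, which follows from Lemma~\ref{HeatingWeights} since $1-A^{-4} \in [1/2,1]$ for $A>2$.

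Concretely, I would fix a radius $R=\sqrt d$ (independent of $A$) and split $f_2 = f_{2,\mathrm{near}} + f_{2,\mathrm{far}}$ with $f_{2,\mathrm{near}} = f_2\chi_{B_R(i)}$. For the near part, Young's inequality applied to convolution with the Gaussian $G_t$ yields
\[
\|\Heat[f_{2,\mathrm{near}}](\fdot,t)\|_{L_p(\R^d)} \leq \|G_t\|_{L_p}\,\|f_{2,\mathrm{near}}\|_{L_1} \lesssim_A A^{\alpha}\|f_{2,\mathrm{near}}\|_{L_1},
\]
since an elementary computation gives $\|G_t\|_{L_p} \asymp t^{-\alpha/2} \sim A^{\alpha}$. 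By Lemma~\ref{HeatingWeights} applied to $w_{0,i}$ (which satisfies \eqref{BoundOnW}), the weight $\Heat[w_{0,i}](y,1-A^{-4})$ is bounded below by a constant $c_R>0$ uniformly for $y\in B_R(i)$ and $A > 2$, so $\|f_{2,\mathrm{near}}\|_{L_1} \lesssim \|f_2\|_{L_1(\Heat[w_{0,i}](\fdot,1-A^{-4}))}$.

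For the far part I would work pointwise: for $x \in Q_{0,i}$ and $y \in B_R(i)^c$ one has $|x-y| \geq |y-i|/2$, and $1/(4t) \geq A^2/4$ gives $e^{-|x-y|^2/(4t)} \leq e^{-|y-i|^2 A^2/16}$. Using the same lower bound from Lemma~\ref{HeatingWeights} in the form $1 \lesssim (1+|y-i|)^{\theta_1}\Heat[w_{0,i}](y,1-A^{-4})$, I obtain
\[
|\Heat[f_{2,\mathrm{far}}](x,t)| \lesssim (4\pi t)^{-d/2} \sup_{s \geq R}(1+s)^{\theta_1}e^{-s^2 A^2/16}\cdot \|f_2\|_{L_1(\Heat[w_{0,i}](\fdot,1-A^{-4}))},
\]
and the supremum is at most $e^{-cA^2}$ for some $c>0$ and all $A>2$. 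Hence the far contribution is $\lesssim A^d e^{-cA^2}\|f_2\|_{L_1(\Heat[w_{0,i}])}$, and this is $\lesssim A^{\alpha}\|f_2\|_{L_1(\Heat[w_{0,i}])}$ uniformly for $A>2$ since $A^{d-\alpha}e^{-cA^2}$ stays bounded. Summing the near and far estimates and integrating $L_p$ over $Q_{0,i}$ (rather than over $\R^d$) completes the bound.

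The argument is essentially bookkeeping and no individual step is hard; the main point is keeping every implicit constant independent of $A$, which is ensured by Lemma~\ref{HeatingWeights} on the weight side and by super-polynomial Gaussian decay on the tail side. A tempting alternative would be to rescale $x\mapsto Ax$ and invoke Corollary~\ref{L1LpFormulaRescaled} with $k=1$ directly, but the rescaled cube $AQ_{0,i}$ has diameter of order $A$ while the polynomial weights centred at $Ai$ appearing in that corollary have scale $1$; the required comparison $\chi_{AQ_{0,i}} \lesssim (1+|y-Ai|)^{-\theta_v}$ then produces a parasitic factor $A^{\theta_v/p}$. Splitting into near and far pieces, as above, bypasses this scale mismatch cleanly.
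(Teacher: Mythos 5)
Your argument is correct, and it takes a genuinely different route from the paper. The paper's proof first splits the cube $Q_{0,i}$ into its $A^d$ children $Q_{1,j}$ via Lemma~\ref{Split}, applies the rescaled smoothing estimate of Corollary~\ref{L1LpFormulaRescaled} at scale $k=1$ on each child (with $u_{1,j}=w_{1,j}$, $v_{1,j}=\chi_{Q_{1,j}}$), and then re-assembles the right-hand side using the weight comparison $\sum_{j\colon Q_{1,j}\subset Q_{0,0}} w_{1,j}\lesssim_A w_{0,0}$ together with Lemma~\ref{HeatingWeights}; in other words, it resolves the mismatch between the unit-scale cube and the $A^{-1}$-scale smoothing by decomposing the \emph{target} domain. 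You instead decompose the \emph{source} $f_2$ into a near piece, where Young's inequality with $\|G_t\|_{L_p}\asymp t^{-\alpha/2}\sim A^{\alpha}$ produces the exact power of $A$ and the heated weight is bounded below by a constant on $B_{\sqrt d}(i)$, and a far piece, which is annihilated by the super-polynomial Gaussian decay $A^d e^{-cA^2}\lesssim A^{\alpha}$. Your closing remark about the parasitic factor $A^{\theta_v/p}$ is exactly the obstruction the paper's subcube decomposition is designed to circumvent. The trade-off: your proof is more self-contained and avoids both Corollary~\ref{L1LpFormulaRescaled} and the weight-summation step, at the cost of redoing the $L_1\to L_p$ kernel computation by hand; the paper's proof reuses machinery already set up for Proposition~\ref{ConvexControl}. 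All your implicit constants are indeed uniform in $A>2$ (the lower bound from Lemma~\ref{HeatingWeights} holds for $1-A^{-4}\in[0,2]$ with constants depending only on $\theta_1$ and $c_w$), so the uniformity claim survives.
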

\begin{proof}
Without loss of generality, we may assume~$i = 0$. By Lemma~\ref{Split}, it suffices to show
\eq{\label{EarlyCorWanted}
\sum\limits_{j\colon Q_{1,j}\subset Q_{0,0}}\!\!\!\!\|f_1\|_{L_p(Q_{1,j})} \lesssim_A A^{\alpha} \|f_2\|_{L_1(\Heat[w_{0,0}](\fdot,1-A^{-4}))}.
}
By~\eqref{fkfm} and Corollary~\ref{L1LpFormulaRescaled} (with~$k=1$,~$s = A^{-2} - A^{-4}$, $v_{1,j}:= \chi_{Q_{1,j}}$,~$u_{1,j} := w_{1,j}$; so~$\theta_u = \theta_1$ and~$\theta_v = p\theta_1 + 1$),
\eq{
\sum\limits_{j\colon Q_{1,j}\subset Q_{0,0}}\!\!\!\!\|f_1\|_{L_p(Q_{1,j})} \lesssim_A A^{\alpha}\sum\limits_{j\colon Q_{1,j}\subset Q_{0,0}}\|f_2\|_{L_1(w_{1,j})}.
}
Thus, the desired inequality~\eqref{EarlyCorWanted} will follow with the help of Lemma~\ref{HeatingWeights}, provided we show that
\eq{\label{SumOfWeights}
\sum\limits_{j\colon Q_{1,j} \subset Q_{0,0}}w_{1,j}(x) \lesssim_A w_{0,0}(x),\quad x\in \R^d.
}

To show~\eqref{SumOfWeights}, we first note that the weights~$\{w_{1,j}\}_{j\in\Z^d}$ form a partition of unity, which, in particular, means that the left hand side of this inequality never exceeds one. Consequently, it suffices to prove the inequality in the case~$|x| \geq 2\sqrt{d}$. Note that in this case all the quantities~$w_{1,j}(x)$ are comparable since~$Q_{1,j} \subset Q_{0,0}$:
\eq{
w_{1,j}(x) \lesssim_A (1+|Ax - j|)^{-\theta_1} \lesssim_A (1+A|x|)^{-\theta_1},\quad A|x| \geq 2|j|.
}
Therefore,
\eq{
\sum\limits_{j\colon Q_{1,j} \subset Q_{0,0}}w_{1,j}(x) \lesssim_A A^{d} (1+A|x|)^{-\theta_1}\leq A^{d-\theta_1}|x|^{-\theta_1} \lesssim_A w_{0,0}(x),
}
provided~$A > 2$,~$\theta_1 > d$, and~$|x| \geq 2\sqrt{d}$.
\end{proof}
\begin{proof}[Proof of Proposition~\ref{ConvexControl}.]
We first apply Lemma~\ref{Split}:
\eq{\label{PrConvContr1}
\sum\limits_{k\geq 0}A^{-\alpha k}\|f_k\|_{L_{p,1}(\!\!\!\!\bigcup\limits_{(k,j)\in \Conv}\!\!\!\! Q_{k,j})} \leq \sum\limits_{k\geq 0}A^{-\alpha k}\sum\limits_{j\colon (k,j)\in \Conv} \|f_{k}\|_{L_{p,1}(Q_{k,j})}.
}
Then, we use the representation~$f_k = \Heat [f_{k+3}](\fdot, A^{-2k} - A^{-2k-6})$ (see~\eqref{fkfm}) and apply Corollary~\ref{L1LpFormulaRescaled} with~$u_{k,j} := \Heat[w_{k,j}](\fdot, A^{-2k} - A^{-2k-6})$ (by Lemma~\ref{HeatingWeights}, this weight satisfies~\eqref{uBoundRescaled} with~$\theta_u:= \theta_1$),~$v_{k,j}:= \chi_{Q_{k,j}}$ (so we set~$\theta_v = p\theta_u + 1$), and~$s = A^{-2k} - A^{-2k-6}$, which is fine when~$A \geq 2$:
\eq{
A^{-\alpha k}\|f_{k}\|_{L_{p,1}(Q_{k,j})} \lesssim \|f_{k+3}\|_{L_1(\Heat [w_{k,j}](\fdot, A^{-2k} - A^{-2k-6}))}.
}
We continue the estimate~\eqref{PrConvContr1} using the definition of a convex atom:
\mlt{
\sum\limits_{k\geq 0}\sum\limits_{j\colon (k,j)\in \Conv} \|f_{k+3}\|_{L_1(\Heat [w_{k,j}](\fdot, A^{-2k} - A^{-2k-6}))} \leq\\ \frac{1}{\eps} \sum\limits_{k \geq 0}\sum\limits_{j\in\Z^d} \Big(\|f_{k+3}\|_{L_1(\Heat [w_{k,j}](\fdot, A^{-2k} - A^{-2k-6}))} - \|f_k\|_{L_1(w_{k,j})}\Big) = \\
\frac{1}{\eps}\sum\limits_{k \geq 0}\Big(\|f_{k+3}\|_{L_1} - \|f_k\|_{L_1}\Big)\Lseqref{telescope} \|f\|_{L_1}.
}
\end{proof}

\section{Compactness argument}\label{S5}
The informal meaning of the flat/convex classification of atoms is that~$\eps$-flat atoms mark the places in~$\R^d$ where the function~$f_k$ is close to a positive rank-one function. Recall Lemma~\ref{JensenEquality}, which says that the presence of a~$0$-flat atom ensures~$f_{k+3} = a\otimes h$ with~$h \geq 0$. So, we are looking for a compactness argument which will allow us to replace~$0$ with~$\eps$, obtain that~$f_k$ is somehow close to being a positive rank-one function, and then apply Corollary~\ref{RobustCorollary} to it. Unfortunately, this principle seems to be false in general. Imagine a function~$f_k$ concentrated far way from the cube~$Q_{k,j}$ and obtaining arbitrary values on this cube. Seemingly, we can deduce nothing from the flatness of~$(k,j)$ since the behavior of~$f_k$ in a neighborhood of~$Q_{k,j}$ is not very much related to the weighted norms we consider. 

This hints us that our searched-for compactness argument should have an assumption that~$f_k$ is concentrated in a neighborhood of~$Q_{k,j}$. Fix some number~$\theta_2$ to be specified later and require~$\theta_2 < \theta_1$. Consider the weight
\eq{\label{uformula}
u(x) = (1+|x|)^{-\theta_2}.
}
Let also~$C$ be some fixed constant. We express the concentration of~$f_0$ on~$(0,0)$ by the inequality
\eq{\label{Concentrated}
\|f_2\|_{L_1(u)} \leq C\|f_2\|_{L_1(\Heat[w_{0,0}] (\fdot,1 -A^{-4}))}.
}
The presence of~$f_2$ instead of~$f_0$ in this inequality is needed for technical reasons. This condition might be transferred to an arbitrary atom in the usual way. Let also~$\theta_3 > d$ be a number to be specified later. Now we only require
\eq{
\theta_3 > p\theta_1.
} 
Consider the weight
\eq{
v(x) = (1+|x|)^{-\theta_3}.
}
Recall the cone~$\MM^\WW$ naturally related to the space~$\WW$ by formula~\eqref{ConeFromSpace}.
\begin{Th}\label{Compactness}
Let~$\delta_0 \notin \MM^\WW$. Let~$\tilde \delta$ be the specific number obtained in Corollary~\textup{\ref{RobustCorollary}} with the cone~$\MM^\WW$ and the weight~$G:= v$. Fix~$\theta_1, \theta_2,$ and~$\theta_3$. For any~$C$ fixed and any~$A$ sufficiently large there exists~$\eps$ depending on all the parameters \textup(except for the function~$f\in L_1\cap \WW$\textup) such that if~$(0,0)$ is~$\eps$-flat and~$f$ satisfies the concentration condition~\eqref{Concentrated}\textup, then
\eq{\label{ImprovedMonotonicityNonPositive}
\|f_1\|_{L_p(\Heat[v](\fdot,\frac{1-A^{-2}}{p}))} \leq A^{\frac{d(p-1)}{p} - \frac14\tilde \delta} \|f_0\|_{L_p(v)}.
}
Moreover\textup, the inequality
\eq{\label{CubeFromBelow}
\|f_2\|_{L_1(\Heat[w_{0,0}] (\fdot, 1-A^{-4}))} \lesssim_A\|f_0\|_{L_p(Q_{0,0})}
}
holds true provided~$\eps$ is sufficiently small \textup(the constant in this inequality does not depend on~$\eps$ as well as on~$A$ provided~$\eps$ is sufficiently small\textup).
\end{Th}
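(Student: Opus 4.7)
The plan is to prove both~\eqref{ImprovedMonotonicityNonPositive} and~\eqref{CubeFromBelow} by a single compactness-and-contradiction argument: as $\eps\to 0$, flatness forces the limiting function to have the positive rank-one structure produced by Lemma~\ref{JensenEquality}, and Corollary~\ref{RobustCorollary} then delivers the strictly improved exponent $\tilde\delta$ needed for a contradiction.

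Suppose~\eqref{ImprovedMonotonicityNonPositive} fails. Then there exists a sequence $\{f^{(n)}\}\subset \WW\cap L_1$ with $(0,0)$ being $\eps_n$-flat, $\eps_n\to 0$, with the concentration~\eqref{Concentrated} holding uniformly in $n$ with the same $C$, yet violating the target estimate. Normalize $\|f_0^{(n)}\|_{L_p(v)}=1$. The concentration condition, combined with flatness via Lemma~\ref{FirstMonotonicityLem}, yields uniform tempered bounds on each $f_k^{(n)}$, enabling the extraction of a subsequential limit $f_k^{(n)}\to f_k^\infty$ in $\Sw'(\R^d,\R^\ell)$ for every $k\geq 0$. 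By closedness of $\WW$, $f^\infty\in\WW$, and the semigroup identity~\eqref{Semigroup} survives passage to the limit.

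Passing $\eps_n\to 0$ to the limit turns the monotonicity identity~\eqref{UsefulFormula} underlying Lemma~\ref{JensenEquality} into equality for $g=f_3^\infty$, $p=1$, $t=1-A^{-6}$, and $w=w_{0,0}$. Hence $f_3^\infty = a\otimes h$ with $a\in\R^\ell$ and $h\geq 0$ a scalar measure, so $h\in\MM^\WW$, and $h\not\equiv 0$ by the normalization. Writing $f_k^\infty = a\otimes\Heat[h](\fdot,A^{-2k}-A^{-6})$ and regarding $s\mapsto\Heat[h](\fdot,s-A^{-6})$ as a heat evolution with datum in $\MM^\WW$ at $s=A^{-6}$, a mild time rescaling (sending $1-A^{-6}\mapsto 1$ at a cost $O(A^{-6})$ in the exponent) brings us into the setting of Corollary~\ref{RobustCorollary} and yields
\[
\|f_1^\infty\|_{L_p(\Heat[v](\fdot,(1-A^{-2})/p))} \leq A^{\alpha-\tilde\delta/2}\|f_0^\infty\|_{L_p(v)}
\]
once $A$ is large. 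Combined with lower and upper semicontinuity of the two sides under the chosen convergence, this contradicts the violation of~\eqref{ImprovedMonotonicityNonPositive}, whose exponent $\alpha-\tilde\delta/4$ is strictly larger than $\alpha-\tilde\delta/2$.

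Estimate~\eqref{CubeFromBelow} is handled by the parallel contradiction: normalize $\|f_2^{(n)}\|_{L_1(\Heat[w_{0,0}](\fdot,1-A^{-4}))}=1$ and repeat the compactness argument. The limit is again $a\otimes h$ with $h\in\MM^\WW$, and~\eqref{Concentrated} forces $\int h(x)(1+|x|)^{-\theta_2}\,dx$ to be uniformly bounded, preventing escape of mass to infinity. A direct Gaussian lower bound showing $\Heat[h](y,1-A^{-6})\gtrsim_A \int h(x)\Heat[w_{0,0}](x,1-A^{-6})\,dx$ for $y\in Q_{0,0}$ then yields $\|f_0^\infty\|_{L_p(Q_{0,0})}\gtrsim_A 1$, contradicting a violated~\eqref{CubeFromBelow}. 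The main obstacle throughout is the compactness step: one must work in a topology strong enough for the weighted norms on both sides of both inequalities to pass to the limit in the correct direction, while simultaneously ensuring that the flatness-forces-rank-one step and the concentration-prevents-escape step are compatible with that topology.
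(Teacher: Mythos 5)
Your strategy for~\eqref{ImprovedMonotonicityNonPositive} (contradiction, compactness, Jensen equality forcing a positive rank-one limit, then Corollary~\ref{RobustCorollary}) is exactly the paper's, but the step you yourself flag as ``the main obstacle'' --- finding a topology strong enough for the norms to pass to the limit --- is left unresolved, and it is precisely where the work lies. Convergence in~$\Sw'$ is not enough: for the contradiction you need the \emph{violated} inequality to survive, i.e.\ you need~$\|f_1^{(n)}\|$ not to \emph{drop} in the limit (upper semicontinuity of the left-hand side) and the normalization of the right-hand side and the flatness equality to persist; Fatou-type lower semicontinuity, which is all that distributional convergence gives, goes the wrong way for these. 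The paper resolves this by proving strong convergence of~$f_2^n$ in~$L_1(\Heat[w](\fdot,1-A^{-4}))$: the concentration condition~\eqref{Concentrated} gives uniform tail control (Lemma~\ref{FromConcentratedToTails}), heat smoothing gives uniform Lipschitz bounds on balls (Lemma~\ref{LipschitzLemma}), and an Arzel\`a--Ascoli argument (Lemma~\ref{Prokhorov}) yields compactness in the weighted~$L_1$ space; Lemma~\ref{L1Lp} then transports this convergence to all the weighted~$L_p$ norms appearing on both sides. Your proposal names none of these mechanisms, so as written the limit passage is a genuine gap rather than a routine verification.

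The second, independent problem is your treatment of~\eqref{CubeFromBelow} by a parallel compactness argument. In the paper's notation~$\lesssim_A$ means the implied constant is \emph{independent} of~$A$, and the statement insists on this; a compactness argument run for each fixed~$A$ can only produce a constant depending on~$A$, and you do not address sequences along which~$A$ varies. This uniformity is not cosmetic: in Corollary~\ref{DirectInductionStep}, Theorem~\ref{InductionStep} and Lemma~\ref{Lele} the constant from~\eqref{CubeFromBelow} must be absorbed into~$A^{-\delta^{**}}$ by taking~$A$ large, which is impossible if the constant grows with~$A$. The paper therefore proves~\eqref{CubeFromBelow} \emph{directly and quantitatively}, not by compactness: Lemma~\ref{FromConcentratedToTails} fixes an~$R$ (depending only on~$C,\theta_1,\theta_2$) with half the mass of~$f_2$ inside~$B_R(0)$, and the anti-cancellation Lemma~\ref{AntiCancellationLemma} upgrades flatness to the pointwise Harnack-type bound~$|f_0(x)|\geq c\,\|f_2\|_{L_1(\Heat[w](\fdot,1-A^{-4}))}$ on~$B_R(0)\supset Q_{0,0}$ with~$c$ independent of~$A$ and of~$\eps$. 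You would need to replace your compactness argument for~\eqref{CubeFromBelow} by something of this quantitative kind.
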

The proof of Theorem~\ref{Compactness} occupies the remaining part of this section. We start with several useful lemmas. Let~$\Omega \subset \R^d$ be an open convex set. We define the Lipschitz semi-norm by the formula
\eq{
\|g\|_{\Lip(\Omega)} = \sup\limits_{\genfrac{}{}{0pt}{-2}{x,y\in\Omega}{x\ne y}}\frac{|g(x) - g(y)|}{|x-y|};
}
note that~$\|g\|_{\Lip(\Omega)} = \|\nabla g\|_{L_{\infty}(\Omega)}$.
\begin{Le}\label{LipschitzLemma}
Let~$R > 0$ be a fixed number and let~$G$ be a weight satisfying the bound~\eqref{uBound} with the parameters~$\theta_G$ and~$c_G$. Then\textup,
\alg{
\label{Lip1}\|\Heat[f](\fdot,s)\|_{\Lip(B_R(0))} \lesssim \|f\|_{L_1(G)};\\
\label{Lip2}\|\Heat[|f|](\fdot,s)\|_{\Lip(B_R(0))} \lesssim \|f\|_{L_1(G)}
}
provided~$s\in [\frac12,2]$. The constants in these inequalities are independent of~$s$.
\end{Le}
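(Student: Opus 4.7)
Since $\Omega = B_R(0)$ is convex, $\|g\|_{\Lip(B_R(0))} = \|\nabla g\|_{L_\infty(B_R(0))}$, so both inequalities reduce to pointwise gradient bounds. My plan is to differentiate under the integral sign and show that the resulting kernel is pointwise dominated, uniformly in $x \in B_R(0)$ and $s \in [\tfrac12,2]$, by a constant multiple of the weight $G(y)$.

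Explicitly, write
\eq{
\nabla_x \Heat[f](x,s) = -\frac{1}{2s}(4\pi s)^{-\frac{d}{2}} \int_{\R^d} f(y)(x-y)\,e^{-\frac{|x-y|^2}{4s}}\,dy,
}
so it suffices to prove the pointwise bound
\eq{
K_s(x,y) := \frac{|x-y|}{2s}(4\pi s)^{-\frac{d}{2}} e^{-\frac{|x-y|^2}{4s}} \lesssim G(y),\qquad x\in B_R(0),\ y\in\R^d,
}
with a constant depending on $R,\theta_G,c_G$ but not on $s\in[\tfrac12,2]$. Given such a bound, we integrate against $|f|$ to obtain~\eqref{Lip1} immediately; inequality~\eqref{Lip2} then follows by applying the first inequality to the scalar function $|f|$ (which is an admissible input since $\||f|\|_{L_1(G)} = \|f\|_{L_1(G)}$).

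To establish the pointwise kernel bound I would split into two regimes. For $|y|\leq 2R$, the weight $G$ is bounded below by $c_G(1+2R)^{-\theta_G}$ and $K_s(x,y)$ is trivially bounded by an absolute constant (since $s\geq\tfrac12$ and $|x-y|\,e^{-|x-y|^2/8}$ is uniformly bounded), so both sides are comparable on this ball. For $|y|>2R$ and $x\in B_R(0)$ we have $|x-y|\geq |y|/2$, and thus
\eq{
K_s(x,y) \lesssim |x-y|\,e^{-|x-y|^2/8} \lesssim e^{-|y|^2/64},
}
which decays faster than any polynomial. In particular it is dominated by a constant multiple of $(1+|y|)^{-\theta_G}\leq c_G^{-1}G(y)$.

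I do not anticipate a serious obstacle here: both regimes are routine and the use of the hypothesis $s\in[\tfrac12,2]$ is only to keep the factors $(4\pi s)^{-d/2}$ and $1/(2s)$ bounded, and to ensure $e^{-|x-y|^2/(4s)}\leq e^{-|x-y|^2/8}$ in the second regime. The mildest care is required in checking that differentiation under the integral is justified — this follows from the same kernel estimate together with the fact that $f\in L_1(G)$, so dominated convergence applies on any compact set in $x$.
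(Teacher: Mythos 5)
Your proof is correct and follows essentially the same route as the paper: both bound $|\nabla_x\Heat[f](x,s)|$ and $|\nabla_x\Heat[|f|](x,s)|$ by $\int|f(y)|\,|\nabla_x e^{-|x-y|^2/(4s)}|\,dy$ and then verify that the kernel is dominated by a constant times $G(y)$ uniformly for $x\in B_R(0)$ and $s\in[\tfrac12,2]$, using only the lower bound~\eqref{uBound} on $G$. Your two-regime split $|y|\le 2R$ versus $|y|>2R$ is just a slightly more explicit way of checking the finiteness of the supremum that the paper writes down directly.
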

\begin{proof}
We would like to estimate the quantities~$|\nabla \Heat[f](x,s)|$ and~$|\nabla \Heat[|f|](x,s)|$ when~$|x| < R$ and~$s \in [\frac12,2]$. Both these quantities do not exceed
\eq{
(4\pi s)^{-\frac d2}\int\limits_{\R^d}|f(y)||\nabla_xe^{-\frac{|x-y|^2}{4s}}|\,dy.
}
This integral is bounded by~$L\|f\|_{L_1(G)}$, where
\eq{
L \lesssim \sup\limits_{\genfrac{}{}{0pt}{-2}{|x| \leq R}{y\in\R^d}} \frac{|x-y|e^{-\frac{|x-y|^2}{4s}}}{G(y)} \Lref{\scriptscriptstyle s\leq 2} \sup\limits_{y\in\R^d}\frac{|y+R|e^{-\frac{(|y| - R)^2 - R^2}{8}}}{c_G(1+|y|)^{-\theta_G}}.
}
This quantity is finite for any choice of the parameters.
\end{proof}
\begin{Rem}\label{LipschitzRemark}
If we pick arbitrary~$s\in (0,1]$\textup, then the inequalities~\eqref{Lip1} and~\eqref{Lip2} still hold true\textup, however\textup, the constants in these inequalities are not uniform with respect to~$s$.
\end{Rem}
Till the end of this section, we will use the notation
\eq{\label{Tildew}
\tilde{w} = \Heat[w](\fdot,1-t),
}
where the weight~$w$ is given by~\eqref{Weightw}. 
\begin{Le}\label{AntiCancellationLemma}
Assume~$f$ satisfies the flatness condition in the form
\eq{\label{Milderflatness}
\|f\|_{L_1(\tilde{w})} - \|\Heat[f](\fdot,1-t)\|_{L_1(w)} \leq \eps \|f\|_{L_1(\tilde{w})},
}
here~$t \in [0,\frac12]$ is a fixed real. 
Fix a large number~$R$ and assume that
\eq{\label{SmallTail}
\int\limits_{|x|\geq R}|f(x)|\tilde{w}(x)\,dx \leq \frac12 \|f\|_{L_1(\tilde{w})}.
}
There exists~$c > 0$ such that the inequality
\eq{\label{Canc}
\big|\Heat[f](x,1-t)\big| \geq c\|f\|_{L_1(\tilde{w})}
}
holds true for any~$x\in B_R(0)$ provided~$\eps$ is sufficiently small. The constant~$c$ does not depend on~$t$ as long as~$\eps$ is sufficiently small and~$R$ is fixed.
\end{Le}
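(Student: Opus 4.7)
The plan is to reduce the problem to showing that the non-negative function $F(x) := \Heat[|f|](x, 1-t) - |\Heat[f](x, 1-t)|$ is pointwise small on $B_R(0)$, while its majorant $\Heat[|f|](\cdot, 1-t)$ is pointwise bounded away from zero on the same set. By identity~\eqref{UsefulFormula} applied to~\eqref{Milderflatness} (with $g := f$), the flatness hypothesis is equivalent to $\int_{\R^d} F(x) w(x)\,dx \leq \varepsilon \|f\|_{L_1(\tilde w)}$, so after normalizing $\|f\|_{L_1(\tilde w)} = 1$ the function $F$ is small in the weighted $L_1(w)$ norm. Once both a lower bound $\Heat[|f|] \geq c_1 > 0$ on $B_R(0)$ and a pointwise estimate $F \leq c_1/2$ on $B_R(0)$ are established, the conclusion~\eqref{Canc} follows from $|\Heat[f]| = \Heat[|f|] - F \geq c_1/2$.

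For the lower bound on $\Heat[|f|]$, I would combine the small-tail assumption~\eqref{SmallTail} with Lemma~\ref{HeatingWeights}. The hypothesis $t \leq 1/2$ places the heat time $1-t$ in $[1/2, 1]$, so by Remark~\ref{SeparateRemark} the weight $\tilde{w}$ satisfies $\tilde w(x) \lesssim 1$ uniformly on $B_R(0)$. Together with~\eqref{SmallTail} this gives $\int_{|x| \leq R} |f(x)|\,dx \gtrsim 1$, and since the Gaussian kernel $(4\pi(1-t))^{-d/2}e^{-|y-x|^2/(4(1-t))}$ is bounded below by a positive constant depending only on $R$ whenever $y \in B_R(0)$ and $|x| \leq R$, one obtains $\Heat[|f|](y, 1-t) \geq c_1 > 0$ uniformly in $t$.

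For the pointwise smallness of $F$, I would run a Lipschitz--$L_1$ interpolation on $B_R(0)$. Lemma~\ref{LipschitzLemma}, applied to both $f$ and $|f|$ with $G := \tilde w$ (whose required lower bound is supplied by Lemma~\ref{HeatingWeights}), gives $\|F\|_{\Lip(B_R(0))} \leq L$ for a constant $L$ depending only on $R$, $\theta_1$, and the constants from~\eqref{BoundOnW}. The bound~\eqref{BoundOnW} forces $w(x) \gtrsim_R 1$ on $B_R(0)$, so the weighted estimate upgrades to $\int_{B_R(0)} F \leq C_R \varepsilon$. A pigeonhole argument then finishes the job: if $F(x_0) > c_1/2$ somewhere in $B_R(0)$, the Lipschitz bound forces $F \geq c_1/4$ on the intersection of $B_{c_1/(4L)}(x_0)$ with $B_R(0)$, a set whose Lebesgue measure is bounded below by a positive constant depending only on $R$, $c_1$, and $L$. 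This yields $\int_{B_R(0)} F \gtrsim 1$, contradicting the $O(\varepsilon)$ bound once $\varepsilon$ is taken small enough, and proving $F \leq c_1/2$ on $B_R(0)$.

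The argument is largely mechanical; the only subtle point is to keep every implicit constant uniform in $t \in [0, 1/2]$. This uniformity is precisely what the hypothesis $t \leq 1/2$ provides, by placing the heat time $1-t$ in the compact interval $[1/2, 1]$, where Lemmas~\ref{HeatingWeights} and~\ref{LipschitzLemma} yield constants independent of $t$. I do not expect any serious obstruction beyond this bookkeeping.
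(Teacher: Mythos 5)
Your argument is correct and follows essentially the same route as the paper: the lower bound on $\Heat[|f|]$ via \eqref{SmallTail} and the Gaussian kernel, the reformulation of flatness through \eqref{UsefulFormula}, and the Lipschitz-plus-pigeonhole contradiction using Lemma~\ref{LipschitzLemma} are exactly the paper's three steps. The only differences are cosmetic (normalizing $\|f\|_{L_1(\tilde{w})}=1$ and passing to an unweighted integral over $B_R(0)$ before the pigeonhole, rather than integrating against $w$ directly), and your tracking of uniformity in $t\in[0,\tfrac12]$ matches the paper's.
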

\begin{proof}
Let us first prove a similar inequality, which definitely avoids unwanted cancellations:
\eq{\label{NoCanc}
\Heat[|f|](x,1-t) \geq c_1\|f\|_{L_1(\tilde{w})},\quad |x| \leq R.
}
This is straightforward:
\mlt{
\Heat[|f|](x,1-t) = \big(4\pi(1-t)\big)^{-\frac d2}\int\limits_{\R^d} |f(y)| e^{-\frac{|x-y|^2}{4(1-t)}}\,dy \geq\\ \big(4\pi(1-t)\big)^{-\frac d2}\int\limits_{|y|\leq R} |f(y)| e^{-\frac{|x-y|^2}{4(1-t)}}\,dy \geq \tilde c\int\limits_{|y|\leq R}|f(y)|\tilde w(y),
}
where
\eq{
\tilde c = \inf\limits_{|x|,|y| \leq R} \frac{(4\pi)^{-\frac d2}(1-t)^{-\frac d2}e^{-\frac{|x-y|^2}{4(1-t)}}}{\tilde{w}(y)} \gtrsim e^{-R^2}(1+R)^{\theta_1}
}
(we have used Lemma~\ref{HeatingWeights} here). To finish the proof of~\eqref{NoCanc}, we simply use~\eqref{SmallTail} and set~$c_1:= \tilde{c}/2$.

Now we return to the proof of~\eqref{Canc}. We will show this inequality holds true with~$c := \frac12 c_1$. We recall~\eqref{UsefulFormula} to state that our flatness assumption~\eqref{Milderflatness} leads to
\eq{\label{UsefulFormulaRestated}
\int\limits_{\R^d}\Big(\Heat[|f|](x,1-t) - \big|\Heat[f](x,1-t)\big|\Big)w(x)\,dx \leq \eps \|f\|_{L_1(\tilde{w})}.
}
Let us assume the contrary: let there exist~$x_0\in B_R(0)$ such that
\eq{
\big|\Heat[f](x_0,1-t)\big| \leq \frac12 c_1\|f\|_{L_1(\tilde{w})}.
}
According to~\eqref{NoCanc}, 
\eq{
\Heat[|f|](x_0,1-t) - \big|\Heat[f](x_0,1-t)\big| > \frac{c_1}{2}\|f\|_{L_1(\tilde{w})}.
}
By Lemma~\ref{LipschitzLemma}, the expression on the left hand side of the above inequality is a Lipschitz function of~$x_0$ with the Lipschitz constant~$L\|f\|_{L_1(w)}$, where~$L$ depends on~$R$ and the parameters of~$\tilde{w}$ (those are~$\theta_1$ and~$c_{w}$; we have used Lemma~\ref{HeatingWeights} here again) only. Therefore, 
\eq{
\Heat[|f|](x,1-t) - \big|\Heat[f](x,1-t)\big| > \frac{c_1}{5}\|f\|_{L_1(\tilde{w})}
}
for any~$x\in B_R(0) \cap B_{\frac{c_1}{10 L}}(x_0)$. We integrate this inequality with respect to~$x$:
\eq{
\int\limits_{B_R(0) \cap R_{\frac{c_1}{10 L}}(x_0)}\!\!\!\!\!\!\!\! \Big(\Heat[|f|](x,1-t) - \big|\Heat[f](x,1-t)\big|\Big)w(x)\,dx \geq \frac{\pi_d}{d!}\Big(\frac{c_1}{10L}\Big)^d \big(\s[w](R)\big)^{-1}\frac{c_1}{5} \|f\|_{L_1(\tilde{w})},
}
here~$\pi_d$ is the volume of the unit ball in~$\R^d$ (we have assumed that~$R$ is larger than~$\frac{c_1}{10L}$, which is not a restriction, to estimate the volume of the intersection of two balls, the center of the smaller one lying inside the larger one, by~$(d!)^{-1}$ of the volume of the smaller one). The latter inequality contradicts~\eqref{UsefulFormulaRestated} provided
\eq{
\eps < \frac{\pi_d}{d!}\Big(\frac{c_1}{10L}\Big)^d \big(\s[w](R)\big)^{-1}\frac{c_1}{5}.
}
\end{proof}
\begin{Le}\label{FromConcentratedToTails}
Assume that
\eq{\label{eq524}
\|g\|_{L_1(u)} \leq C\|g\|_{L_1(\tilde{w})}
}
for some function~$g \in L_1(\tilde{w})$ and~$C  > 0$. Then\textup, there exists~$R > 0$ that depends on~$\gamma \in (0,1)$\textup,~$\theta_1$\textup,~$\theta_2$\textup, and~$C$ only such that 
\eq{\label{SmallTailGen}
\int\limits_{|x|\geq R}|g(x)|\tilde{w}(x)\,dx \leq \gamma \|g\|_{L_1(\tilde{w})}.
}
The choice of~$R$ is independent of~$t < \frac12$ \textup(this parameter is implicitly present in the definition~\eqref{Tildew} of~$\tilde{w}$\textup) and~$g$.
\end{Le}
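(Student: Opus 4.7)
The plan is to use the fact that $\tilde{w}$ decays strictly faster than $u$ at infinity, so that the $\tilde{w}$-mass on the tail can be controlled by the $u$-mass on the tail (which in turn is bounded by the $\tilde w$-norm through the hypothesis \eqref{eq524}).

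More concretely, I would first pin down the decay of $\tilde{w}$. Since $w$ itself satisfies the two-sided estimate \eqref{BoundOnW} with rate $\theta_1$, and $1-t\in[\tfrac12,1]$, Lemma~\ref{HeatingWeights} (and more specifically Remark~\ref{SeparateRemark}, which treats the upper and lower estimates separately) gives
\eq{
\tilde{w}(x)=\Heat[w](x,1-t)\leq \tilde{C}_{w}(1+|x|)^{-\theta_1},\qquad x\in\R^d,
}
with a constant $\tilde{C}_{w}$ independent of $t\in[0,\tfrac12]$.

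Next, since $\theta_2<\theta_1$, on the region $\{|x|\geq R\}$ one has the pointwise comparison
\eq{
\tilde{w}(x)\leq \tilde{C}_{w}(1+|x|)^{-(\theta_1-\theta_2)}(1+|x|)^{-\theta_2}\leq \tilde{C}_{w}(1+R)^{-(\theta_1-\theta_2)}u(x).
}
Integrating this against $|g|$ and applying the hypothesis \eqref{eq524} gives
\eq{
\int\limits_{|x|\geq R}|g(x)|\tilde{w}(x)\,dx\leq \tilde{C}_{w}(1+R)^{-(\theta_1-\theta_2)}\|g\|_{L_1(u)}\leq \tilde{C}_{w}C(1+R)^{-(\theta_1-\theta_2)}\|g\|_{L_1(\tilde{w})}.
}

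To conclude, I would choose $R$ so large that $\tilde{C}_{w}C(1+R)^{-(\theta_1-\theta_2)}\leq\gamma$; this choice depends only on $\gamma$, $C$, $\theta_1$, $\theta_2$ (since $\tilde{C}_{w}$ itself depends only on $\theta_1$ and the constants $c_w,C_w$ in \eqref{BoundOnW}, which are absolute), and crucially not on $t$ or $g$. There is no real obstacle here: the only point requiring a bit of care is the uniformity in $t$, and this is delivered for free by Lemma~\ref{HeatingWeights}, which is why the interval $t\in[0,\tfrac12]$ (equivalently $1-t\in[\tfrac12,1]$) is built into the hypotheses.
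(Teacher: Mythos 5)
Your proposal is correct and follows essentially the same route as the paper: bound $\tilde{w}$ from above by $\tilde{C}_w(1+|x|)^{-\theta_1}$ via Lemma~\ref{HeatingWeights}, compare $\tilde{w}/u$ on $\{|x|\geq R\}$ using $\theta_2<\theta_1$, and invoke the hypothesis \eqref{eq524} to close the estimate with $R$ chosen large. The uniformity in $t$ is handled exactly as in the paper.
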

In particular, if~$\gamma =\frac12$, the concentration condition~\eqref{Concentrated} implies~\eqref{SmallTail} with~$f:= f_2$ and~$t:=A^{-4}$.
\begin{proof}
We estimate the left hand side of~\eqref{SmallTailGen}:
\mlt{
\int\limits_{|x| \geq R} |g(x)|\tilde{w}(x)\,dx \leq \int\limits_{|x|\geq R}|g(x)|u(x)\,dx \sup\limits_{|x|\geq R}\frac{\tilde{w}(x)}{u(x)} \Lref{\hbox{\tiny Lem.~\ref{HeatingWeights}; \eqref{eq524}}}  \\C\|g\|_{L_1(\tilde{w})}\sup\limits_{|x|\geq R}\frac{C_{\tilde{w}} (1+|x|)^{-\theta_1}}{(1+|x|)^{-\theta_2}} \Lref{\theta_2 < \theta_1} CC_{\tilde{w}} (1+R)^{\theta_2 - \theta_1}\|g\|_{L_1(\tilde{w})},
}
and see that the quantity in front of~$\|g\|_{L_1(\tilde{w})}$ becomes arbitrarily small when~$R\to \infty$ since~$\theta_2 < \theta_1$. 
\end{proof}
\begin{Le}\label{Prokhorov}
Let~$\{L_R\}_{R \in \N}$ be a sequence of positive scalars. The set
\eq{
\LIP = \Set{g \in L_1(\tilde{w})}{\|g\|_{L_1(u)} \leq C\|g\|_{L_1(\tilde{w})} \leq C; \quad \forall R\in \N\quad \|g\|_{\Lip(B_R(0))} \leq L_R}
} 
is compact in~$L_1(\tilde{w})$.
\end{Le}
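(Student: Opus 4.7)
The plan is essentially a Riesz--Kolmogorov / Arz\`ela--Ascoli + tightness argument, with Lemma \textup{\ref{FromConcentratedToTails}} playing the role of Prokhorov's tightness criterion. Take an arbitrary sequence $\{g_n\}_{n\in\N}\subset \LIP$; the goal is to extract a subsequence converging in $L_1(\tilde w)$ to an element of $\LIP$.

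First, I would upgrade the Lipschitz bound on $B_R(0)$ to a uniform $L_\infty(B_R(0))$ bound. Indeed, $\tilde w$ is bounded below by a positive constant $c_R$ on $B_R(0)$ (by Lemma~\textup{\ref{HeatingWeights}} applied to $w$), so
\eq{
\|g_n\|_{L_1(B_R(0))} \leq c_R^{-1}\|g_n\|_{L_1(\tilde w)}\leq c_R^{-1}.
}
Combined with the equi-Lipschitz bound $\|g_n\|_{\Lip(B_R(0))}\leq L_R$, a standard argument (if $|g_n(x_0)|\geq M$, then $|g_n|\geq M/2$ on a ball of radius $M/(2L_R)$) yields a uniform bound $\|g_n\|_{L_\infty(B_R(0))}\leq M_R$. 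Thus on each ball $B_R(0)$ the family $\{g_n\}$ is uniformly bounded and equicontinuous, so the Arz\`ela--Ascoli theorem produces a subsequence converging uniformly on $B_R(0)$. A standard diagonal argument then extracts a single subsequence $\{g_{n_k}\}$ converging uniformly on every $B_R(0)$ to some continuous function $g$.

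Next, I would verify convergence in $L_1(\tilde w)$ by splitting
\eq{
\int_{\R^d}|g_{n_k}-g|\tilde w\,dx = \int_{B_R(0)}|g_{n_k}-g|\tilde w\,dx + \int_{|x|\geq R}|g_{n_k}-g|\tilde w\,dx.
}
For fixed $R$, the first integral is bounded by $\|g_{n_k}-g\|_{L_\infty(B_R(0))}\|\tilde w\|_{L_1}$ and tends to $0$ as $k\to\infty$. For the tail, Lemma~\textup{\ref{FromConcentratedToTails}} applied to each $g_{n_k}$ (using the defining condition $\|g_{n_k}\|_{L_1(u)}\leq C\|g_{n_k}\|_{L_1(\tilde w)}$ and fixed parameters $\theta_1,\theta_2,C$) gives a single $R = R(\gamma)$, independent of $k$, such that
\eq{
\int_{|x|\geq R}|g_{n_k}|\tilde w\,dx\leq \gamma\|g_{n_k}\|_{L_1(\tilde w)}\leq \gamma;
}
by Fatou, the same tail bound (with $2\gamma$) holds for $g$. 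Thus the tail is uniformly small, and letting first $k\to\infty$ and then $\gamma\to 0$ (hence $R\to\infty$) gives $g_{n_k}\to g$ in $L_1(\tilde w)$.

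It remains to check $g\in\LIP$. The Lipschitz bounds and the inequality $\|g\|_{L_1(\tilde w)}\leq 1$ pass to the limit by uniform convergence on compact subsets and Fatou respectively, and the condition $\|g\|_{L_1(u)}\leq C\|g\|_{L_1(\tilde w)}$ is obtained by another application of Fatou together with the already established convergence of the right-hand side. The main conceptual point to notice is that the assumption $\|g\|_{L_1(u)}\leq C\|g\|_{L_1(\tilde w)}$ (with $u$ having slower decay than $\tilde w$) is precisely what provides tightness; without it, mass could escape to infinity and compactness would fail. The only mildly delicate step is passing from the Lipschitz + weighted-$L_1$ control to a pointwise bound on each ball, but this is routine once the lower bound on $\tilde w$ from Lemma~\textup{\ref{HeatingWeights}} is invoked.
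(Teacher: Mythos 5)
Your proof is correct and follows essentially the same route as the paper's: uniform $L_\infty$ bounds on balls from the Lipschitz and weighted-$L_1$ control, Arz\`ela--Ascoli on each $B_R(0)$, and tightness of the tails via Lemma~\ref{FromConcentratedToTails}; the paper phrases this as constructing a finite $\epsilon$-net rather than extracting a convergent subsequence, which is the same argument in total-boundedness form. Your additional verification that the limit lies in $\LIP$ (closedness) is a small bonus the paper omits, though only precompactness is actually used later.
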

\begin{proof}
It suffices to construct a finite~$\epsilon$-net in~$L_1(\tilde{w})$ for~$\LIP$ given any~$\epsilon \in (0,1)$. By Lemma~\ref{FromConcentratedToTails}, there exists~$R_{\epsilon} > 0$ such that
\eq{
\int\limits_{|x| > R_\epsilon} |g(x)|\tilde{w}(x)\,dx \leq \frac{\epsilon}{2}\|g\|_{L_1(\tilde{w})} \leq \frac{\epsilon}{2}
}
for any~$g \in \LIP$. It is also clear that for any~$R\in\N$ we have~$\|g\|_{L_\infty(B_R(0))} \leq b_R$ for any~$g\in \LIP$, where~$\{b_R\}_R$ is a fixed sequence of constants. Thus, by the Arzela--Ascoli theorem, there is a finite~$\frac{\epsilon}{2\int\tilde{w}}$-net~$\{\tilde{h}_k\}_k$ for the set~$\LIP_{R_{\epsilon}}$ in the space~$C(B_{R_\epsilon}(0))$, where
\eq{
\LIP_R = \set{h\in C(B_R(0))}{\exists g \in \LIP\quad h = g|_{B_R(0)}}.
}
Then, the set~$\{h_k\}_k$, where
\eq{
h_k(x) = \begin{cases} \tilde  h_k(x),\quad &x\in B_{R_\epsilon}(0);\\
0,\quad &x\notin B_{R_{\epsilon}}(0),
\end{cases}
}
is an~$\epsilon$-net for~$\LIP$ in~$L_1(\tilde{w})$.
\end{proof}
\begin{proof}[Proof of Theorem~\ref{Compactness}.]
We use Lemma~\ref{FromConcentratedToTails} with~$g:= f_2$ and~$t := A^{-4}$ to choose~$R > \sqrt{d}$ such that~\eqref{SmallTail} holds true with~$f:=f_2$, i.e. such that
\eq{
\int\limits_{|x| \geq R} |f_2(x)|\Heat[w](\fdot,1-A^{-4})(x)\,dx \leq \frac12 \|f_2\|_{L_1(\Heat[w](\fdot,1-A^{-4}))}.
} 
By Lemma~\ref{AntiCancellationLemma} with~$f:= f_2$ and~$t := A^{-4}$ (the application of this lemma is legal since condition~\eqref{Milderflatness} in this case follows from the assumption that~$(0,0)$ is flat by virtue of Remark~\ref{AroundFlatness}),
\eq{
|f_0(x)| = \Big|\Heat[f_2](x,1-A^{-4})\Big| \geq c\|f_{2}\|_{L_1(\Heat[w](\fdot,1-A^{-4}))}, \quad x\in B_R(0),
}
provided~$\eps$ is sufficiently small. Note that the constant~$c$ depends neither on~$A$, nor on~$\eps$ (provided~$\eps$ is sufficiently small). This inequality justifies~\eqref{CubeFromBelow}.

Now we fix~$A$ and allow~$\eps$ to depend on~$A$. Our aim is to prove~\eqref{ImprovedMonotonicityNonPositive}. Assume the contrary: let there exist a sequence of functions~$f^n \in L_1\cap\WW$ such that the atom~$(0,0)$ is~$\frac{1}{n}$-flat for~$f^n$, the condition~\eqref{Concentrated} is fulfilled with~$f_2:=f_2^n$, however,~\eqref{ImprovedMonotonicityNonPositive} is violated in the sense that
\eq{\label{Contrary}
\|f_1^n\|_{L_p(\Heat[v](\fdot,\frac{1-A^{-2}}{p}))} > A^{\frac{d(p-1)}{p} - \frac14\tilde \delta} \|f_0^n\|_{L_p(v)}.
}
Without loss of generality, we assume
\eq{\label{UnitIntegral}
\|f_0^n\|_{L_1(w)} = 1.
}
Since~$(0,0)$ is~$\frac{1}{n}$-flat for~$f^n$,
\eq{\label{eq534}
\|f_3^n\|_{L_1(\Heat[w](\fdot,1-A^{-6}))} \leq 2,
}
and, by Remark~\ref{LipschitzRemark}, we also have
\eq{
\|f_2^n\|_{\Lip(B_R(0))} \leq L_R
}
for some fixed constants~$L_R$ (these constants do not depend on~$n$; they will surely depend on~$A$ since
\eq{
f_2^n = \Heat[f_3^n](\fdot,A^{-4} - A^{-6})\quad \hbox{and}\ s = A^{-4} - A^{-6}
}
in the terminology of Remark~\ref{LipschitzRemark}). By~\eqref{eq534},
\eq{
\|f_2^n\|_{L_1(\Heat[w](\fdot,1-A^{-4}))} \leq 2.
}
We apply Lemma~\ref{Prokhorov} and extract from the sequence~$\{f_2^n\}_{n}$ a subsequence that converges to a function~$F$ in~$L_1(\Heat[w](\fdot,1-A^{-4}))$. Without loss of generality, we may assume that~$\{f_2^n\}_n$ converges to~$F$ itself. Since the topology of~$L_1(\tilde{w})$ is stronger than that of~$\Sw'(\R^d,\R^\ell)$ (we use that~$\tilde{w}$ satisfies the bound from below as in~\eqref{BoundOnW}, thanks to Lemma~\ref{HeatingWeights}), we obtain~$F\in \WW$.

By Lemma~\ref{L1Lp} with~$u := \tilde{w}$ and~$v := w$, we have~$f_0^{n} \to \Heat[F](\fdot,1-A^{-4})$ in~$L_1(w)$. In particular, our normalization~\eqref{UnitIntegral} then implies~$F\ne 0$. Therefore, the flatness assumption leads to
\eq{
\|F\|_{L_1(\Heat[w](\fdot,1-A^{-4}))} = \|\Heat[F](\fdot,1-A^{-4})\|_{L_1(w)}.
}
By Lemma~\ref{JensenEquality},~$F = a\otimes h$, where~$a\in \R^\ell$ and~$h \geq 0$. Note that~$h\in \MM^\WW$.

On the other hand, 
\alg{
f_0^{n} \to \Heat[F](\fdot,1-A^{-4})\quad &\hbox{in }\ L_p(v);\\
f_1^n \to \Heat[F](\fdot, A^{-2}-A^{-4})\quad &\hbox{in}\ L_p\Big(\Heat[v]\Big(\fdot,\frac{1-A^{-2}}{p}\Big)\Big),
}
by Lemma~\ref{L1Lp} since we have assumed~$\theta_3 \geq p\theta_1$ (as usual, we have used Lemma~\ref{HeatingWeights} several times here). Thus,~\eqref{Contrary} implies
\eq{
\|\Heat[h](\fdot, A^{-2}-A^{-4})\|_{L_p(\Heat[v](\fdot,\frac{1-A^{-2}}{p}))} \geq A^{\frac{d(p-1)}{p} - \frac14\tilde \delta} \|\Heat[h](\fdot,1-A^{-4})\|_{L_p(v)},
}
which contradicts Corollary~\ref{RobustCorollary} since~$h \in \MM^\WW$ (we apply the corollary to the function~$H(x,\theta) := \Heat[h](x,\theta - A^{-4})$, the weight~$G:= v$ and~$t:= A^{-2}$).
\end{proof}
\begin{Cor}\label{DirectInductionStep}
By Lemmas~\textup{\ref{HeatingWeights}} and~\textup{\ref{L1Lp}},
\eq{
\|f_0\|_{L_p(v)} \lesssim_A \|f_2\|_{L_1(\Heat[w_{0,0}] (\fdot, 1-A^{-4}))},
}
thus\textup, if all the assumptions of Theorem~\textup{\ref{Compactness}} are satisfied\textup, then one may combine inequalities~\eqref{ImprovedMonotonicityNonPositive} and~\eqref{CubeFromBelow} into
\eq{\label{f537}
 \|f_1\|_{L_p(\Heat[v](\fdot,\frac{1-A^{-2}}{p}))} \lesssim_A A^{\frac{d(p-1)}{p} - \frac14\tilde \delta}\|f_0\|_{L_p(Q_{0,0})}.
}
Though the constant in this inequality does not depend on~$\eps$\textup, the inequality becomes valid only if~$\eps$ is sufficiently small\textup, and the required smallness of~$\eps$ may depend on~$A$. By Lemma~\textup{\ref{HeatingWeights},}~\eqref{f537} also implies
\eq{\label{CubeCube}
\|f_1\|_{L_p(Q_{0,0})} \lesssim_A A^{\frac{d(p-1)}{p} - \frac14\tilde \delta}\|f_0\|_{L_p(Q_{0,0})}.
}
 \end{Cor}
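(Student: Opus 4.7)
The corollary is a chaining argument that stitches the preliminary inequality announced in the statement to the two conclusions of Theorem~\ref{Compactness}. I would first establish $\|f_0\|_{L_p(v)} \lesssim_A \|f_2\|_{L_1(\Heat[w_{0,0}](\fdot, 1-A^{-4}))}$. By the semigroup property~\eqref{Semigroup} one has $f_0 = \Heat[f_2](\fdot, 1-A^{-4})$. Lemma~\ref{HeatingWeights} applied to $w_{0,0}$ yields that the weight $\Heat[w_{0,0}](\fdot, 1-A^{-4})$ is bounded below by a constant multiple of $(1+|x|)^{-\theta_1}$, with the constant independent of $A$ because the time parameter $1-A^{-4}$ stays in $[\tfrac12, 2]$ for $A$ large. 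Since $v(x) = (1+|x|)^{-\theta_3}$ with $\theta_3 > p\theta_1$, the hypothesis of Lemma~\ref{L1Lp} is met, and that lemma delivers the desired bound.

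Next I would chain together three inequalities:
\begin{align*}
\|f_1\|_{L_p(\Heat[v](\fdot, \tfrac{1-A^{-2}}{p}))} &\leq A^{\frac{d(p-1)}{p} - \frac14\tilde\delta}\,\|f_0\|_{L_p(v)} \\
&\lesssim_A A^{\frac{d(p-1)}{p} - \frac14\tilde\delta}\,\|f_2\|_{L_1(\Heat[w_{0,0}](\fdot, 1-A^{-4}))} \\
&\lesssim_A A^{\frac{d(p-1)}{p} - \frac14\tilde\delta}\,\|f_0\|_{L_p(Q_{0,0})},
\end{align*}
invoking in turn~\eqref{ImprovedMonotonicityNonPositive}, the preliminary estimate of the previous paragraph, and~\eqref{CubeFromBelow}. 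This is exactly~\eqref{f537}.

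To obtain~\eqref{CubeCube} I would apply Lemma~\ref{HeatingWeights} once more, now to the weight $v$ at time $(1-A^{-2})/p \in [0,1/p]$; this produces a pointwise lower bound $\Heat[v](x, (1-A^{-2})/p) \gtrsim (1+|x|)^{-\theta_3}$ with constant independent of $A$, hence bounded away from zero uniformly on the bounded cube $Q_{0,0}$. Consequently $\|f_1\|_{L_p(Q_{0,0})} \lesssim \|f_1\|_{L_p(\Heat[v](\fdot, (1-A^{-2})/p))}$, and inserting~\eqref{f537} yields~\eqref{CubeCube}. The only vigilance required is tracking $A$-dependence: the power of $A$ comes entirely from~\eqref{ImprovedMonotonicityNonPositive}, while the auxiliary constants contributed by Lemmas~\ref{L1Lp} and~\ref{HeatingWeights} depend only on $\theta_1$, $\theta_3$, $c_w$, $C_w$, and $p$, so no genuine obstacle arises beyond this bookkeeping.
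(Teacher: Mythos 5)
Your proposal is correct and follows exactly the route the paper intends: the preliminary bound via the semigroup identity $f_0=\Heat[f_2](\fdot,1-A^{-4})$ plus Lemma~\ref{HeatingWeights} (lower bound with $\theta_u=\theta_1$) and Lemma~\ref{L1Lp} (using $\theta_3\geq p\theta_1$), the three-step chain producing~\eqref{f537}, and the passage to~\eqref{CubeCube} by bounding $\Heat[v](\fdot,\frac{1-A^{-2}}{p})$ from below on the bounded cube $Q_{0,0}$ via Lemma~\ref{HeatingWeights}. The $A$-independence bookkeeping is also handled as the paper requires, so nothing is missing.
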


\section{Horizontal interaction}\label{S6}
Let~$(k,j)$ be an atom. It is convenient to introduce the notation
\eq{
f_{k,j}^* = \|f_{k+2}\|_{L_1(\Heat[w_{k,j}](\fdot, A^{-2k} - A^{-2k-4}))}.
}
The quantity~$f^*_{k,j}$ may be informally thought of as the size of the function~$f_k$ on the cube~$Q_{k,j}$ (or the size of the martingale~$f$ on the atom~$(k,j)$ at the moment~$k$). Note that (e.g. by Lemma~\ref{L1Lp} and the assumption~$f\in L_1$) the sequence~$\{f_{k,j}^*\}_{j\in\Z^d}$ is bounded for any~$k$, which makes the maximal functions introduced in the following definition finite. 
\begin{Def}
Let~$\theta_4 > d$ be a number we will specify later. Consider the collection of maximal functions~$\M_k^{\theta_4}\colon \Z^d\to \mathbb{R}^+$\textup,~$k \in\mathbb{N}\cup\{0\}$\textup, defined as follows\textup:
\eq{
\M^{\theta_4}_{k,j}[f] = \sup\limits_{i\in\Z^d}(1+|i-j|)^{-\theta_4}f_{k,i}^*,\quad j \in \Z^d.
}
\end{Def}
Similar smoothing maximal function were used in~\cite{BourgainBrezis2007}. This particular definition is borrowed from~\cite{TaoShortStoryUchiyama2007}. 

We fix~$k,\theta_4$, and~$f$ for a while. We suppress these parameters in our notation if this does not lead to ambiguity and simply write~$f^*_j$ and~$\M_j$. The maximal operator we have introduced generates an interesting oriented graph.
\begin{Def}
By the horizontal graph~$\tilde\Gamma_k$ we mean the following oriented graph. Fix some number~$\lambda > 1$ that will be specified slightly later \textup(we will require~$\lambda$ to be close to one\textup). The set of vertices of~$\tilde\Gamma_k$ is the lattice~$\Z^d$. For each point~$j$\textup, we find some point~$\vec{j} \in \Z^d$ such that
\eq{
\M_{k,j}[f] \leq \lambda (1+|\vec{j} - j|)^{-\theta_4}f^*_{k,\vec j}.
}
If~$\vec j = j$\textup, we do nothing. In the other case\textup, we draw an arrow from~$\vec j$ to $j$.
\end{Def}
Note that a vertex has at most one incoming arrow. Informally, the arrow~$\vec j \to j$ signifies that the point~$\vec j$ dominates~$j$ in the sense that we may estimate the local size~$f_j^*$ by~$f_{\vec j}^*$ uniformly.

\begin{Le}\label{GoodGraph}
If~$\lambda$ is sufficiently close to one \textup(depending on~$\theta_4$ only and independent of~$f$\textup)\textup, then there are no oriented paths of length~$2$ in~$\tilde\Gamma_k$.
\end{Le}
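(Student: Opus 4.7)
The plan is an argument by contradiction that isolates the combinatorics of $\M$ from everything else in the paper. Suppose $\tilde\Gamma_k$ contains an oriented path of length two, namely $k_2 \to k_1 \to k_0$, so that $\vec{k_0} = k_1$ with $k_1 \ne k_0$ and $\vec{k_1} = k_2$ with $k_2 \ne k_1$. From the defining inequality of the graph I get the two bounds
\[
\M_{k_0} \le \lambda\bigl(1+|k_1-k_0|\bigr)^{-\theta_4} f^*_{k_1}, \qquad \M_{k_1} \le \lambda\bigl(1+|k_2-k_1|\bigr)^{-\theta_4} f^*_{k_2},
\]
while directly from $\M_j = \sup_i (1+|i-j|)^{-\theta_4} f^*_i$ I record the two free lower bounds $\M_{k_0} \ge (1+|k_2-k_0|)^{-\theta_4} f^*_{k_2}$ (testing the supremum at $i = k_2$) and $\M_{k_1} \ge f^*_{k_1}$ (testing at $i = k_1$).

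Chaining these four inequalities and cancelling a common factor of $f^*_{k_2}$ produces
\[
\bigl(1+|k_2-k_0|\bigr)^{-\theta_4} \le \lambda^2 \bigl(1+|k_1-k_0|\bigr)^{-\theta_4}\bigl(1+|k_2-k_1|\bigr)^{-\theta_4}.
\]
Writing $a = |k_1-k_0|$, $b = |k_2-k_1|$, invoking the triangle inequality $|k_2-k_0|\le a+b$, and rearranging, I arrive at
\[
\lambda^2 \ge \left(\frac{(1+a)(1+b)}{1+a+b}\right)^{\theta_4} = \left(1 + \frac{ab}{1+a+b}\right)^{\theta_4}.
\]
Because $k_0,k_1,k_2$ are lattice points with $k_1\ne k_0$ and $k_2\ne k_1$, both $a$ and $b$ are integers at least $1$. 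The partial derivative of $ab/(1+a+b)$ in either variable equals $b(1+b)/(1+a+b)^2$ or its counterpart, both strictly positive, so the right-hand side above is minimized at $a=b=1$, taking the value $(4/3)^{\theta_4}$.

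Hence the existence of a length-two path forces $\lambda^2 \ge (4/3)^{\theta_4}$, and any choice $\lambda \in \bigl(1,(4/3)^{\theta_4/2}\bigr)$ — a range depending only on $\theta_4$ — yields the desired contradiction. The only subtlety is the degenerate case $f^*_{k_2}=0$, which forces $\M_{k_1}=0$ and hence $f^*_{k_1}=0$; then the defining inequality for $\vec{k_0}$ is satisfied by $\vec{k_0} = k_0$ as well, and we adopt the natural convention of picking $\vec j = j$ whenever admissible, which eliminates both alleged arrows. No serious obstacle is anticipated: the content of the lemma is the elementary inequality $(1+a)(1+b) \ge \frac{4}{3}(1+a+b)$ for integers $a,b\ge 1$ combined with the sup definition of $\M$.
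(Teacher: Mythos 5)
Your proof is correct and follows essentially the same route as the paper's: the same chaining of the two arrow inequalities with the two trivial lower bounds for the supremum, the same reduction via the triangle inequality, and the same elementary bound $(1+a)(1+b)\geq\tfrac43(1+a+b)$ yielding the identical threshold $\lambda<(4/3)^{\theta_4/2}$ (the paper merely translates the middle vertex to the origin first). The only nitpick is that $a=|k_1-k_0|$ and $b=|k_2-k_1|$ are Euclidean norms of nonzero lattice vectors, hence $\geq 1$ but not necessarily integers; your monotonicity argument only needs $a,b\geq 1$, so nothing is affected.
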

\begin{proof}
Assume the contrary, let there be a path of length two. Without loss of generality, we may assume that the path is~$j\to 0 \to i$, where~$i\ne 0$ and~$j\ne 0$ by construction. Then,
\eq{
\M_i \leq \lambda (1+|i|)^{-\theta_4}f_0^*\quad \hbox{and}\quad \M_0\leq \lambda (1+|j|)^{-\theta_4}f_j^*,
}
which leads to the inequality
\eq{
\M_i \leq \lambda^2 \big((1+|i|)(1+|j|)\big)^{-\theta_4} f_j^*.
}
Combining this with the definition of~$\M_i$ and noting that~$f_j^* > 0$, we arrive at
\eq{
(1+|i-j|)^{-\theta_4} \leq \lambda^2\big((1+|i|)(1+|j|)\big)^{-\theta_4},
}
which is equivalent to
\eq{
1+|i-j| \geq \lambda^{-\frac{2}{\theta_4}} (1+|i| + |j| + |i||j|).
}
We note that~$|i||j| \geq \frac13(1+|i|+|j|)$ for any~$i,j \in \Z^d\setminus \{0\}$, so,
\eq{
1+|i-j| \geq \frac43\lambda^{-\frac{2}{\theta_4}}(1+|i| + |j|),
}
which is definitely false if~$\lambda < \big(\frac43\big)^{\frac{\theta_4}{2}}$.
\end{proof}
In particular, there are no pairs of arrows~$i\to j$ and~$j \to i$ in~$\tilde\Gamma_k$, so it is indeed an oriented graph. We fix
\eq{\label{Lambda}
\lambda = \min\Big(2, \frac{1+ \big(\frac43\big)^{\frac{\theta_4}{2}}}{2}\Big).
}
\begin{Def}
Let~$K > 1$ be a real. We say that an atom~$(k,j)$ is~$K$-saturated if
\eq{
\M_{k,j}^{\theta_4}[f] \leq K f_{k,j}^*.
}
\end{Def}
\begin{Le}\label{NoArrowsSaturated}
If the vertex~$j$ does not have an incoming arrow in~$\tilde\Gamma_k$\textup, then the atom~$(k,j)$ is~$2$-saturated. If~$(k,j)$ is not~$2$-saturated\textup, then it has an incoming arrow.
\end{Le}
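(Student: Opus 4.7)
The two assertions are contrapositives of each other, so it suffices to prove one of them; I will prove the second. The plan relies on one simple observation: the definition of the maximal function gives
\eq{
\M_{k,j}^{\theta_4}[f] = \sup_{i\in\Z^d}(1+|i-j|)^{-\theta_4} f^*_{k,i} \geq f^*_{k,j},
}
by taking $i = j$. This lower bound together with the choice of $\lambda$ in~\eqref{Lambda} (which guarantees $\lambda \leq 2$) will do all the work.

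Suppose $(k,j)$ is not $2$-saturated, i.e. $\M_{k,j}^{\theta_4}[f] > 2 f^*_{k,j}$. The construction of $\tilde\Gamma_k$ requires us to pick some $\vec{j} \in \Z^d$ so that
\eq{
\M_{k,j}^{\theta_4}[f] \leq \lambda (1+|\vec{j}-j|)^{-\theta_4} f^*_{k,\vec{j}}.
}
If it were the case that $\vec{j} = j$, this inequality would read $\M_{k,j}^{\theta_4}[f] \leq \lambda f^*_{k,j} \leq 2 f^*_{k,j}$, contradicting the failure of $2$-saturation. Hence $\vec{j} \neq j$, and by the construction an arrow is drawn from $\vec{j}$ to $j$. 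This arrow is, by definition, an incoming arrow at $j$, proving the second assertion.

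The first assertion is now immediate by contraposition: if the vertex $j$ has no incoming arrow in $\tilde\Gamma_k$, then in particular the clause of the second assertion cannot hold, so $(k,j)$ must be $2$-saturated. There is no real obstacle here; the whole content of the lemma is a bookkeeping verification that the bound $\lambda \leq 2$ arranged in~\eqref{Lambda} is compatible with the definition of a $2$-saturated atom.
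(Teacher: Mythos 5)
Your proof is correct and follows essentially the same argument as the paper: both hinge on the fact that the construction of $\tilde\Gamma_k$ forces either $\vec{j}=j$ (whence $\M_{k,j}\leq\lambda f^*_{k,j}\leq 2f^*_{k,j}$) or an incoming arrow at $j$. The only cosmetic difference is that you prove one assertion and get the other by contraposition, whereas the paper writes out both (equally trivial) directions.
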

\begin{proof}
Let us prove the first claim. By construction, if~$(k,j)$ does not have an incoming arrow, then
\eq{
\M_j \leq \lambda f_j^* < 2f_j^*,
} 
which means~$(k,j)$ is~$2$-saturated. The second claim is similar: if~$(k,j)$ is not~$2$-saturated, then
\eq{
2f_j^* < \M_j,
}
and by construction,~$(k,j)$ must have an incoming arrow.
\end{proof}
Lemmas~\ref{GoodGraph} and~\ref{NoArrowsSaturated} imply the following statement.
\begin{Cor}\label{SaturatedArrowCor}
If a vertex in~$\tilde{\Gamma}_k$ has an outgoing arrow\textup, then the corresponding atom is~$2$-saturated.
\end{Cor}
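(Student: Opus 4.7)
The plan is to combine the two preceding lemmas in the most direct way. Suppose the vertex $j \in \Z^d$ has an outgoing arrow in $\tilde{\Gamma}_k$, i.e.\ there exists some $i \in \Z^d$ with $j \to i$. I would first observe that $j$ cannot simultaneously carry an incoming arrow: if there were some $m \in \Z^d$ with $m \to j$, then $m \to j \to i$ would be an oriented path of length~$2$ in $\tilde{\Gamma}_k$, contradicting Lemma~\ref{GoodGraph} (which applies because we have fixed $\lambda$ as in~\eqref{Lambda}, satisfying $\lambda < (4/3)^{\theta_4/2}$). Hence $j$ has no incoming arrow.

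Next, I invoke the first claim of Lemma~\ref{NoArrowsSaturated}: any vertex $j$ without an incoming arrow yields a $2$-saturated atom $(k,j)$. This immediately gives the conclusion.

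The argument is essentially a one-line combinatorial deduction, so there is no real obstacle; the entire content has been absorbed into the preceding two lemmas. The only point that requires a bit of care is making sure that our choice of $\lambda$ in~\eqref{Lambda} is indeed admissible for applying Lemma~\ref{GoodGraph}, but this was arranged precisely for this purpose, so no additional work is needed.
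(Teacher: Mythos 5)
Your proof is correct and is exactly the argument the paper intends: the corollary is stated immediately after Lemmas~\ref{GoodGraph} and~\ref{NoArrowsSaturated} with the remark that they imply it, and your deduction (an incoming arrow at a vertex with an outgoing arrow would create a forbidden length-$2$ path, so the vertex has no incoming arrow and is therefore $2$-saturated) is precisely that implication. The check that the fixed $\lambda$ from~\eqref{Lambda} satisfies $\lambda<\big(\tfrac43\big)^{\theta_4/2}$ is also right.
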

Recall the weight~$u$ defined in~\eqref{uformula}.
\begin{Le}\label{SaturatedConcentrated}
Let~$\theta_2 > \theta_4 + d$. There exists a constant~$C$ depending on~$\theta_1, \theta_2$\textup, and~$\theta_4$ only such that if~$(0,0)$ is~$2$-saturated\textup, then it fulfills the concentration assumption
\eq{
\|f_{2}\|_{L_1(u)} \leq Cf^*_{0,0}.
}
\end{Le}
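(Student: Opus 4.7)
The plan is to decompose $\|f_2\|_{L_1(u)}$ along the tiling of $\R^d$ by the unit cubes $\{Q_{0,j}\}_{j\in\Z^d}$, control each local piece by the localized quantity $f^*_{0,j}$, and then convert these bounds into a bound by $f^*_{0,0}$ using the $2$-saturation hypothesis. The decay of $u$ will then absorb the polynomial growth coming from saturation precisely because $\theta_2 > \theta_4 + d$.

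For the first step, on the cube $Q_{0,j}$ the weight $u(x)=(1+|x|)^{-\theta_2}$ is comparable to its value at $j$, namely $(1+|j|)^{-\theta_2}$, with constants depending only on $d$ and $\theta_2$. Writing $\|f_2\|_{L_1(u)}=\sum_j\int_{Q_{0,j}}|f_2|u$ yields
\eq{
\|f_2\|_{L_1(u)}\lesssim\sum_{j\in\Z^d}(1+|j|)^{-\theta_2}\int_{Q_{0,j}}|f_2(x)|\,dx.
}

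For the second step, I bound $\int_{Q_{0,j}}|f_2|$ by $f^*_{0,j}$ uniformly in $j$ and $A$. Since $1-A^{-4}\in[0,1]\subseteq[0,2]$, Lemma~\ref{HeatingWeights} together with Remark~\ref{SeparateRemark} gives the pointwise lower bound $\Heat[w_{0,j}](x,1-A^{-4})\gtrsim(1+|x-j|)^{-\theta_1}$, and on $Q_{0,j}$ we have $|x-j|\leq\sqrt{d}/2$, so the heat extension is bounded below on the cube by a positive constant depending only on $\theta_1$ and $d$. Plugging this into the definition of $f^*_{0,j}$ gives the desired estimate.

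For the third step, the $2$-saturation of $(0,0)$ reads $(1+|j|)^{-\theta_4}f^*_{0,j}\leq 2 f^*_{0,0}$ for every $j\in\Z^d$, that is, $f^*_{0,j}\leq 2(1+|j|)^{\theta_4}f^*_{0,0}$. Substituting this into the two previous steps yields
\eq{
\|f_2\|_{L_1(u)}\lesssim f^*_{0,0}\sum_{j\in\Z^d}(1+|j|)^{\theta_4-\theta_2},
}
and the series converges precisely because of the hypothesis $\theta_2-\theta_4>d$, producing a constant $C$ depending only on $\theta_1$, $\theta_2$, $\theta_4$ (and $d$), as required. I do not anticipate a serious obstacle here: the only point demanding care is that the constants in the second step must be independent of $A$, which is automatic because the time parameter $1-A^{-4}$ stays in the fixed interval $[0,1]$ where Lemma~\ref{HeatingWeights} applies with $A$-independent constants.
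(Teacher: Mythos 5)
Your proof is correct and follows essentially the same route as the paper's: decompose over the unit cubes, compare $u$ to $(1+|j|)^{-\theta_2}$ on each cube, bound the local integral by $f^*_{0,j}$ via a uniform lower bound on the heat-extended weight (the paper uses Lemma~\ref{SmoothnessConv} where you invoke Lemma~\ref{HeatingWeights}, but both give the same $A$-independent constant since $1-A^{-4}\le 2$), and then use $2$-saturation together with the convergence of $\sum_j(1+|j|)^{\theta_4-\theta_2}$. No gaps.
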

\begin{proof}
It suffices to prove
\eq{
\|f_{2}\|_{L_1(u)} \leq \frac{C}{2}\M_{0,0}^{\theta_4}[f].
}
It remains to write several inequalities:
\mlt{
\int\limits_{\R^d}|f_2(x)|u(x)\,dx = \sum\limits_{i\in\Z^d}\int\limits_{Q_{0,i}} |f_2(x)|u(x)\,dx \Leqref{uformula}\\ \s[u](\sqrt{d}) \sum\limits_{i\in\Z^d} (1+|i|)^{-\theta_2}\int\limits_{Q_{0,i}}|f_2(x)|\,dx \Lsref{\hbox{\tiny Lem.~\ref{SmoothnessConv}}}
\frac{\s[u](\sqrt{d})\s[w](\sqrt{d})}{\tilde{w}(0)} \sum\limits_{i\in\Z^d} (1+|i|)^{-\theta_2}f_{0,i}^* \leq\\ \frac{\s[u](\sqrt{d})\s[w](\sqrt{d})}{\tilde{w}(0)} \sum\limits_{i\in\Z^d} (1+|i|)^{\theta_4-\theta_2}\M_{0,0}^{\theta_4}[f] \lesssim \M_{0,0}^{\theta_4}[f];
}
as usual, we are using the notation~$\tilde{w} = \Heat[w](\fdot,1 - A^{-4})$.
\end{proof}
\begin{Le}\label{SubordinationL1Lp}
Let the atom~$(k,i)$ be subordinate to~$(k,j)$ in the sense that there is the arrow~$j \to i$ in the graph~$\tilde\Gamma_k$. Then\textup, the inequality
\eq{
\|f_{k+1}\|_{L_p(Q_{k,i})} \lesssim_A A^{\alpha (k+1)}(1+|i-j|)^{-\theta_4}f^*_{k,j}
}
holds true\textup; the constant in the inequality depends neither on~$f$\textup, nor on~$A$\textup, nor on the particular choice of~$i$ and~$j$.
\end{Le}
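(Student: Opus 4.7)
The plan is to combine a rescaled form of Corollary~\ref{EarlyCor} with the definition of the arrow in~$\tilde{\Gamma}_k$. The first step produces a local $L_p$--$L_1$ bound at scale~$k$ centered at~$i$; the second step swaps the centering from~$i$ to~$j$ using the polynomial decay built into the maximal function~$\M^{\theta_4}_{k,\cdot}[f]$.

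First, I would apply dilation invariance. Consider the auxiliary function $g(x) = f(A^{-k}x)$. Using the semigroup/dilation identity $(T_{A^{-k}}f)_m = f_{m+k}(A^{-k}\fdot)$ together with the easily verified scaling $\Heat[w_{k,i}](x,s) = \Heat[w_{0,i}](A^kx,sA^{2k})$, a routine change of variables shows
\eq{\label{sub1}
\|g_1\|_{L_p(Q_{0,i})} = A^{kd/p}\|f_{k+1}\|_{L_p(Q_{k,i})},
}
\eq{\label{sub2}
\|g_2\|_{L_1(\Heat[w_{0,i}](\fdot,1-A^{-4}))} = A^{kd}\,f^*_{k,i}.
}
Applying Corollary~\ref{EarlyCor} to~$g$ and using~\eqref{sub1}, \eqref{sub2} together with the relation $d(1 - 1/p) = \alpha$ yields the scale-$k$ version
\eq{\label{sub3}
\|f_{k+1}\|_{L_p(Q_{k,i})} \lesssim_A A^{\alpha(k+1)} f^*_{k,i}.
}

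Second, I would pass from~$f^*_{k,i}$ to~$f^*_{k,j}$. By taking~$l = i$ in the supremum defining the maximal function, $f^*_{k,i} \leq \M^{\theta_4}_{k,i}[f]$. The existence of the arrow $j \to i$ in the horizontal graph~$\tilde{\Gamma}_k$ means, by construction, that
\eq{\label{sub4}
\M^{\theta_4}_{k,i}[f] \leq \lambda (1+|i-j|)^{-\theta_4}f^*_{k,j}.
}
Since $\lambda$ is the fixed constant chosen in~\eqref{Lambda}, combining~\eqref{sub3} and~\eqref{sub4} proves the lemma.

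There is no real obstacle here; the only thing that requires care is verifying the correct power of~$A$ when transferring Corollary~\ref{EarlyCor} from scale~$0$ to scale~$k$, which is a bookkeeping exercise in how the $L_p$ norm on $Q_{k,i}$, the $L_1(\Heat[w_{k,i}])$ norm, and the heat time $A^{-2(k+1)}$ jointly rescale. Once that computation is done, the combinatorial input from the graph~$\tilde{\Gamma}_k$ supplies the decay factor $(1+|i-j|)^{-\theta_4}$ for free.
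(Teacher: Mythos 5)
Your proof is correct and is essentially the paper's own argument: the paper also reduces to $k=0$ by dilation invariance (you just carry out the rescaling bookkeeping explicitly), applies Corollary~\ref{EarlyCor} to get $\|f_1\|_{L_p(Q_{0,i})}\lesssim_A A^{\alpha}f^*_{0,i}$, and then chains $f^*_{0,i}\leq \M^{\theta_4}_{0,i}[f]\leq\lambda(1+|i-j|)^{-\theta_4}f^*_{0,j}$ using the definition of the arrow. No gaps.
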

\begin{proof}
Without loss of generality, we may assume~$k=0$. We apply Corollary~\ref{EarlyCor}:
\eq{
\|f_1\|_{L_p(Q_{0,i})} \lesssim_A A^{\alpha}\|f_2\|_{L_1(\Heat[w_{0,i}](\fdot,1-A^{-4}))} = A^{\alpha}f^*_{0,i} \leq A^{\alpha}\M_{0,i}^{\theta_4}[f] \Lref{j\to i} \lambda A^{\alpha} (1+|i-j|)^{-\theta_4} f_{0,j}^*.
}
\end{proof}
\begin{Th}\label{InductionStep}
Let~$\delta_0\notin \MM^\WW$\textup, let the parameters~$\theta_1, \theta_2, \theta_3, \theta_4$, and~$p$ be fixed. Let these parameters satisfy
\alg{
p \leq 2;\\
\theta_1 > \theta_2;\\
\theta_3 \geq p\theta_1;\\
\theta_2 > \theta_4 + d;\\
\theta_4 > d.
}
Let also~$\theta_5$ be a fixed parameter such that~$d < \theta_5 < \theta_4$. The following statement is true for any~$A$ sufficiently large. There exists~$\eps > 0$\textup, possibly depending on~$A$\textup, and a positive constant~$\delta^*$ independent of~$A$ such that if the atom~$(k,j)$ is~$\eps$-flat and~$2$-saturated whereas the atom~$(k,i)$ is subordinate to~$(k,j)$ in the graph~$\tilde\Gamma_k$\textup, then 
\eq{
\|f_{k+1}\|_{L_p(Q_{k,i})} \lesssim_A A^{\alpha - \delta^*}(1+|i-j|)^{-\theta_5}\|f_{k}\|_{L_p(Q_{k,j})}.
}
\end{Th}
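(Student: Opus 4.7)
The plan is to combine the improved monotonicity furnished by Theorem \ref{Compactness} at the dominating atom $(k,j)$ with the spatial decay encoded in the horizontal arrow $j\to i$, and to finish by a geometric-mean interpolation against the baseline estimate of Lemma \ref{SubordinationL1Lp}. By dilation and translation invariance I may assume $k=0$ and $j=0$. Since $(0,0)$ is $2$-saturated and the parameters satisfy $\theta_2>\theta_4+d$, Lemma \ref{SaturatedConcentrated} provides the concentration condition \eqref{Concentrated} with an absolute constant $C$ depending only on $\theta_1,\theta_2,\theta_4$. Together with $\eps$-flatness this places us in the setting of Theorem \ref{Compactness} and Corollary \ref{DirectInductionStep}, so for $A$ large and $\eps$ small enough (depending on $A$) both
\begin{equation}\label{eq:plan1}
\|f_1\|_{L_p(\Heat[v](\fdot,\tfrac{1-A^{-2}}{p}))}\leq A^{\alpha-\tfrac14\tilde\delta}\|f_0\|_{L_p(v)}\quad\text{and}\quad f^*_{0,0}\lesssim_A\|f_0\|_{L_p(Q_{0,0})}
\end{equation}
hold, with $\tilde\delta$ independent of $A$.

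Next I would transfer the global estimate in \eqref{eq:plan1} into a local estimate on the cube $Q_{0,i}$. Lemma \ref{HeatingWeights} applied to $v(x)=(1+|x|)^{-\theta_3}$ gives $\Heat[v](x,\tfrac{1-A^{-2}}{p})\gtrsim(1+|i|)^{-\theta_3}$ for $x\in Q_{0,i}$, whence
$$\|f_1\|_{L_p(Q_{0,i})}\lesssim (1+|i|)^{\theta_3/p}A^{\alpha-\tfrac14\tilde\delta}\|f_0\|_{L_p(v)}.$$
To replace $\|f_0\|_{L_p(v)}$ by $\|f_0\|_{L_p(Q_{0,0})}$ I would split $\R^d=\bigsqcup_{i'}Q_{0,i'}$, use $\|f_0\|_{L_p(Q_{0,i'})}\lesssim_A f^*_{0,i'}$ (which follows from the semigroup identity $f_0=\Heat[f_2](\fdot,1-A^{-4})$ and Corollary \ref{L1LpFormulaRescaled}), bound $f^*_{0,i'}\leq 2(1+|i'|)^{\theta_4}f^*_{0,0}$ by $2$-saturation, and then invoke \eqref{eq:plan1}. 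Summability over $i'$ holds since $\theta_3\geq p\theta_1>p\theta_4+d$ (as $\theta_1>\theta_2>\theta_4+d$ and $p>1$). The upshot is
\begin{equation}\label{eq:plan2}
\|f_1\|_{L_p(Q_{0,i})}\lesssim_A A^{\alpha-\tfrac14\tilde\delta}(1+|i|)^{\theta_3/p}\|f_0\|_{L_p(Q_{0,0})}.
\end{equation}

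The main obstacle is that the factor $(1+|i|)^{\theta_3/p}$ in \eqref{eq:plan2} grows in the wrong direction and is useless for large $|i|$ by itself. I would overcome this by pairing \eqref{eq:plan2} with the companion bound given by Lemma \ref{SubordinationL1Lp} and \eqref{eq:plan1},
$$\|f_1\|_{L_p(Q_{0,i})}\lesssim_A A^{\alpha}(1+|i|)^{-\theta_4}\|f_0\|_{L_p(Q_{0,0})},$$
and applying the elementary inequality $\min(X,Y)\leq X^{\tau}Y^{1-\tau}$ with
$$\tau=\frac{\theta_5+\theta_3/p}{\theta_4+\theta_3/p}\in(0,1),$$
which lies in the open unit interval precisely because $\theta_5<\theta_4$. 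This choice calibrates the exponent of $(1+|i|)$ to $-\theta_5$ while leaving the exponent of $A$ equal to $\alpha-(1-\tau)\tilde\delta/4$. Since $\tau$ depends only on the fixed parameters $p,\theta_3,\theta_4,\theta_5$ and not on $A$, the constant $\delta^*:=(1-\tau)\tilde\delta/4>0$ is $A$-independent, which is exactly the uniformity demanded by the statement.
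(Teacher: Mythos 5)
Your proof is correct and follows essentially the same route as the paper: you establish the two competing bounds $\|f_1\|_{L_p(Q_{0,i})}\lesssim_A A^{\alpha}(1+|i|)^{-\theta_4}\|f_0\|_{L_p(Q_{0,0})}$ (via Lemma~\ref{SubordinationL1Lp} and~\eqref{CubeFromBelow}) and $\|f_1\|_{L_p(Q_{0,i})}\lesssim_A A^{\alpha-\tilde\delta/4}(1+|i|)^{\theta_3/p}\|f_0\|_{L_p(Q_{0,0})}$ (via Theorem~\ref{Compactness} and the lower bound on $\Heat[v]$), and then take the geometric mean with the correct exponent $\tau$, which the paper leaves implicit. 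The only (harmless) deviation is that you pass from $\|f_0\|_{L_p(v)}$ to $\|f_0\|_{L_p(Q_{0,0})}$ by a cube-by-cube decomposition using $2$-saturation, whereas the paper gets this in one step from Lemma~\ref{L1Lp} via Corollary~\ref{DirectInductionStep}.
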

\begin{proof}
Without loss of generality, we may assume~$k=0$ and~$j=0$. The desired inequality will follow from the two inequalities below (recall~$\tilde{\delta}$ from Corollary~\ref{RobustCorollary}):
\alg{
\label{First}\|f_1\|_{L_p(Q_{0,i})} \lesssim_A A^{\alpha}(1+|i|)^{-\theta_4}\|f_0\|_{L_p(Q_{0,0})};\\
\label{Second}\|f_1\|_{L_p(Q_{0,i})} \lesssim_A A^{\alpha - \frac{\tilde \delta}{4}}(1+|i|)^{\frac{\theta_3}{p}}\|f_0\|_{L_p(Q_{0,0})}. 
}
According to Lemma~\ref{SaturatedConcentrated}, the atom~$(0,0)$ fulfills the concentration condition~\eqref{Concentrated}. Thus, the application of Theorem~\ref{Compactness} and Corollary~\ref{DirectInductionStep} to the atom~$(0,0)$ is legal.

Inequality~\eqref{First} is a consequence of Lemma~\ref{SubordinationL1Lp} and~\eqref{CubeFromBelow}.

Let us prove~\eqref{Second}:
\mlt{
\|f_1\|_{L_{(Q_{0,i})}}^p \Lsref{\hbox{\tiny Lem.~\ref{HeatingWeights}}}\!\!\!\!\!\!_A \s[v](\sqrt{d})(1+|i|)^{\theta_3}\int\limits_{Q_{0,i}}|f_1(x)|^p\Heat[v]\Big(x,\frac{1 - A^{-2}}{p}\Big)dx \leq \\
\s[v](\sqrt{d}) (1+|i|)^{\theta_3} \|f_1\|_{L_p(\Heat[v](x,\frac{1 - A^{-2}}{p}))}^p \Lseqref{f537}\!\!\!\!\!_A A^{p(\alpha - \frac{\tilde \delta}{4})} (1+|i|)^{\theta_3}\|f_0\|_{L_p(Q_{0,0})}^p.
}

\end{proof}

\section{Vertical interaction and control of flat atoms}\label{S7}
Now we will introduce a graph~$\Gamma$ that will express the vertical domination of atoms. In this graph, the arrows will always go down, i.e. from an atom~$(k,j)$ to~$(k+1,j')$.
\begin{Def}\label{VerticalGraph}
The set of vertices of~$\Gamma$ is the set of all~$\eps$-flat atoms. We draw an arrow from~$(k,j)$ to~$(k+1,j')$ if~$(k,j)$ is~$2$-saturated and either~$Q_{k+1,j'} \subset Q_{k,j}$ or~$Q_{k+1,j'}\subset Q_{k,i}$\textup,~$Q_{k,i}$ is not~$2$-saturated\textup, and~$j\to i$ in~$\tilde\Gamma_k$.
\end{Def}
It will be important for our considerations that any two cubes~$Q_{k,j}$ and~$Q_{k',j'}$ are either disjoint up to a set of measure zero, or one of them contains the other (this follows from the assumption that~$A$ is odd).
\begin{Rem}
Note that~$\Gamma$ is a forest in the sense that it is a disjoint union of maximal by inclusion oriented trees~$\T_1,\T_2,\ldots$. Indeed\textup, there are no non-oriented cycles in~$\Gamma$ since every vertex has at most one incoming arrow and all the arrows \textup"go down\textup" \textup(i.e. from a vertex~$(k,j)$ to~$(k+1,j')$\textup). By~$(k_q,j_q)$ we denote the atom corresponding to the root of~$\T_q$\textup, $q\in \N$. 
\end{Rem}
\begin{Rem}\label{OnlySaturatedOutGoing}
By Definition~\textup{\ref{VerticalGraph}} and Corollary~\textup{\ref{SaturatedArrowCor}}\textup, only~$2$-saturated atoms might have outgoing arrows in~$\Gamma$. Thus\textup, inside a tree~$\T_q$ only the leaves might be not $2$-saturated.
\end{Rem}
In Lemma~\ref{Lele} and Corollary~\ref{LeleCor}, we assume that~$\eps$ is sufficiently small (depending on~$A$).
\begin{Le}\label{Lele}
Assume~$\delta_0\notin \MM^\WW$. Let~$(k,j)$ be a flat atom\textup, let~$\{(k+1,i)\}_{i\in J}$ be all its kids in~$\Gamma$. Then\textup,
\eq{
\|f_{k+1}\|_{L_p(\bigcup\limits_{i\in J} Q_{k+1,i})} \lesssim_A A^{\alpha - \delta^*}\|f_k\|_{L_p(Q_{k,j})}.
} 
\end{Le}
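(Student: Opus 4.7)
The plan is to split the set $J$ of kids of $(k,j)$ into two groups according to the two clauses of Definition~\ref{VerticalGraph}: the \emph{direct} kids $J_1 = \{i\in J\mid Q_{k+1,i}\subset Q_{k,j}\}$ and the \emph{horizontal} kids $J_2 = J\setminus J_1$. By the definition of $\Gamma$, each $i\in J_2$ satisfies $Q_{k+1,i}\subset Q_{k,i'}$ for some $i'\in\Z^d$ such that $j\to i'$ in $\tilde\Gamma_k$ and the atom $(k,i')$ is not $2$-saturated. Since $(k,j)$ has outgoing arrows in $\Gamma$, it must itself be $2$-saturated by Remark~\ref{OnlySaturatedOutGoing}; together with the flatness of $(k,j)$ and Lemma~\ref{SaturatedConcentrated} (which supplies the concentration assumption~\eqref{Concentrated} needed by Theorem~\ref{Compactness}), this puts the atom $(k,j)$ in the scope of both Corollary~\ref{DirectInductionStep} and Theorem~\ref{InductionStep} (after the usual translation and dilation to reduce to the case $(k,j)=(0,0)$).

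Since the cubes $\{Q_{k+1,i}\}_{i\in\Z^d}$ are pairwise disjoint up to measure zero, additivity of the $L_p$-norm in the $p$-th power gives
\eq{
\|f_{k+1}\|_{L_p(\bigcup_{i\in J}Q_{k+1,i})}^p = \sum_{i\in J_1}\|f_{k+1}\|_{L_p(Q_{k+1,i})}^p + \sum_{i\in J_2}\|f_{k+1}\|_{L_p(Q_{k+1,i})}^p.
}
The first sum is bounded by $\|f_{k+1}\|_{L_p(Q_{k,j})}^p$, because $\bigcup_{i\in J_1}Q_{k+1,i}\subset Q_{k,j}$. Grouping the kids in $J_2$ by their parent $i'$ at scale $k$, the second sum is bounded by $\sum_{i'\colon j\to i'}\|f_{k+1}\|_{L_p(Q_{k,i'})}^p$, where $i'$ ranges over the indices with $j\to i'$ in $\tilde\Gamma_k$ and $(k,i')$ not $2$-saturated.

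Now I inject the two external inputs. The rescaled version of~\eqref{CubeCube} from Corollary~\ref{DirectInductionStep} yields $\|f_{k+1}\|_{L_p(Q_{k,j})}\lesssim_A A^{\alpha-\tilde\delta/4}\|f_k\|_{L_p(Q_{k,j})}$, while Theorem~\ref{InductionStep} gives
\eq{
\|f_{k+1}\|_{L_p(Q_{k,i'})} \lesssim_A A^{\alpha-\delta^*}(1+|i'-j|)^{-\theta_5}\|f_k\|_{L_p(Q_{k,j})}
}
for each $i'$ with $j\to i'$. Raising these two inequalities to the $p$-th power, summing, and using that the tail sum $\sum_{i'\in\Z^d}(1+|i'-j|)^{-p\theta_5}$ is finite (because $p\theta_5\geq\theta_5>d$) produces
\eq{
\|f_{k+1}\|_{L_p(\bigcup_{i\in J}Q_{k+1,i})}^p \lesssim_A A^{p(\alpha-\delta^*)}\|f_k\|_{L_p(Q_{k,j})}^p,
}
once $\delta^*$ is chosen no larger than $\tilde\delta/4$ as well as the $\delta^*$ of Theorem~\ref{InductionStep}; taking the $p$-th root gives the claim.

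The main obstacle is essentially bookkeeping rather than analysis: all the analytic work is absorbed into Corollary~\ref{DirectInductionStep} and Theorem~\ref{InductionStep}, and the only points requiring attention are (i) checking that $(k,j)$ is automatically $2$-saturated via Remark~\ref{OnlySaturatedOutGoing}, so that the concentration hypothesis of Theorem~\ref{Compactness} is met through Lemma~\ref{SaturatedConcentrated}, and (ii) verifying that the horizontal tail $\sum_{i'}(1+|i'-j|)^{-p\theta_5}$ converges, which is guaranteed by the standing assumption $\theta_5>d$.
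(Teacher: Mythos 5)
Your proof is correct and follows essentially the same route as the paper: both reduce to covering $\bigcup_{i\in J}Q_{k+1,i}$ by $Q_{k,j}$ together with the cubes $Q_{k,i'}$ with $j\to i'$ in $\tilde\Gamma_k$, then apply~\eqref{CubeCube} to the first piece and Theorem~\ref{InductionStep} to the others, summing the convergent tail $\sum_{i'}(1+|i'-j|)^{-p\theta_5}$. Your explicit remark that $\delta^*$ must be taken $\leq\tilde\delta/4$ is implicit in the paper's final display and is a harmless clarification.
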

\begin{proof}
Without loss of generality, we may assume~$k=0$ and~$j=0$. By construction,
\eq{
\bigcup\limits_{i\in J} Q_{1,i} \subset \Omega_{0,0} := Q_{0,0} \cup \Big(\!\!\!\!\!\!\bigcup\limits_{j\colon\scriptscriptstyle(0,0)\stackrel{\tilde{\Gamma}_0}{\to} (0,j)}\!\!\!\!\!\!Q_{0,j}\Big).
}
By Remark~\ref{OnlySaturatedOutGoing},~$(0,0)$ is~$2$-saturated (otherwise it has no kids in~$\Gamma$ and there is nothing to prove), so it is legal to apply Theorems~\ref{Compactness} and~\ref{InductionStep}. Then,
\mlt{
\|f_1\|^p_{L_p(\bigcup\limits_{i\in J} Q_{1,i})} \leq \|f_1\|^p_{L_p(\Omega_{0,0})}\Lsref{\hbox{\tiny \eqref{CubeCube}; Th~\ref{InductionStep}}}\!\!\!\!\!\!\!\!\!\!\!\!_A\\
A^{p(\alpha - \delta^*)} \Big(1+ \sum\limits_{j\in \Z^d} (1+|j|)^{-p\theta_5}\Big) \|f_0\|^p_{L_p(Q_{0,0})} \lesssim_A A^{p(\alpha - \delta^*)}\|f_0\|^p_{L_p(Q_{0,0})}.
}
\end{proof}
Let~$\delta^{**} = \frac12 \delta^*$. The next two corollaries are immediate consequences if Lemma~\ref{Lele}, they do not need proofs. 
\begin{Cor}\label{LeleCor}
Assume~$\delta_0\notin \MM^\WW$. Let~$(k,j)$ be a flat atom\textup, let~$\{(k+1,i)\}_{i\in J}$ be all its kids in~$\Gamma$. Then\textup,
\eq{\label{Submult}
\|f_{k+1}\|_{L_p(\bigcup\limits_{i\in J} Q_{k+1,i})} \leq  A^{\alpha - \delta^{**}}\|f_k\|_{L_p(Q_{k,j})},
} 
provided~$A$ is sufficiently large.
\end{Cor}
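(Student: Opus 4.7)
The plan is to deduce Corollary \ref{LeleCor} from Lemma \ref{Lele} by a straightforward absorption argument: the factor hidden inside the symbol $\lesssim_A$ in Lemma \ref{Lele} is, by definition, a constant that does not depend on $A$, and we will swallow it into the exponent at the cost of weakening $\delta^{*}$ to $\delta^{**} = \frac{1}{2}\delta^{*}$.

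In more detail, Lemma \ref{Lele} supplies a constant $C > 0$, depending on the parameters $\theta_1,\theta_2,\theta_3,\theta_4,\theta_5,p$, on the cone $\MM^\WW$, and on $\eps$ (but crucially not on $A$, nor on $f$, $k$, $j$, or $J$), such that
\eq{
\|f_{k+1}\|_{L_p(\bigcup\limits_{i\in J} Q_{k+1,i})} \leq C\, A^{\alpha - \delta^{*}}\|f_k\|_{L_p(Q_{k,j})}.
}
Factor $A^{\alpha - \delta^{*}} = A^{\alpha - \delta^{**}} \cdot A^{-\delta^{**}}$, so the inequality rewrites as
\eq{
\|f_{k+1}\|_{L_p(\bigcup\limits_{i\in J} Q_{k+1,i})} \leq \bigl(C\, A^{-\delta^{**}}\bigr)\, A^{\alpha - \delta^{**}}\|f_k\|_{L_p(Q_{k,j})}.
}
Since $\delta^{**} > 0$, we have $C\, A^{-\delta^{**}} \leq 1$ as soon as $A \geq C^{1/\delta^{**}}$, which yields~\eqref{Submult}.

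The only subtlety is the logical order of quantifiers. The threshold $A \geq C^{1/\delta^{**}}$ depends on $C$, and $C$ in turn depends on the (already fixed) choice of $\eps$. Since $\eps$ in Theorem \ref{InductionStep} (whose application inside the proof of Lemma \ref{Lele} produced the $\lesssim_A$) is chosen after $A$, one has to read the statement of Corollary \ref{LeleCor} as: first pick $A$ sufficiently large (in terms of the constants that will arise from Theorems \ref{Compactness} and \ref{InductionStep}), then pick $\eps$ correspondingly. This is the only point requiring a little care; there is no genuine obstacle, as the dependencies are linear in the sense that the constant $C$ produced by Lemma \ref{Lele} remains bounded uniformly over the admissible range of $\eps$ once the other parameters are fixed.
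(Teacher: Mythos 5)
Your proposal is correct and matches the paper's intent exactly: the paper declares Corollary \ref{LeleCor} an immediate consequence of Lemma \ref{Lele}, the point being precisely that the $A$-independent constant hidden in $\lesssim_A$ is absorbed by the extra factor $A^{-\delta^{**}}$ once $A$ is large. Your remark on the order of quantifiers is also consistent with the paper, which fixes $A$ first and only then chooses $\eps$, relying on the constants being uniform over all sufficiently small $\eps$.
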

Now we fix~$A$ so large that~\eqref{Submult} holds true. We fix~$\eps$ to be as small as prescribed by Theorems~\ref{Compactness} and~\ref{InductionStep} for our particular choice of~$A$.
\begin{Cor}
Assume~$\delta_0\notin \MM^\WW$. Let~$\T_q$ be a maximal by inclusion tree in~$\Gamma$\textup, let~$(k_q,j_q)$ be its root. Then\textup, for any~$N\in\N$\textup,
\eq{\label{Inductive}
\|f_{k_q+N}\|_{L_p(\!\!\!\!\!\!\!\!\!\bigcup\limits_{(k_q+N,i)\in\T_q}\!\!\!\!\!\!\!\!\! Q_{k_q+N,i})} \leq A^{(\alpha - \delta^{**})N} \|f_{k_q}\|_{L_p(Q_{k_q,j_q})}.
}
\end{Cor}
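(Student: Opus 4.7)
The plan is a straightforward induction on $N$, with Corollary~\ref{LeleCor} supplying the one-level step.

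For the base case $N=1$, the depth-one vertices of $\T_q$ are by definition the kids of the root $(k_q,j_q)$ in $\Gamma$; the root is flat as a vertex of $\Gamma$, so Corollary~\ref{LeleCor} applied to $(k_q,j_q)$ yields~\eqref{Inductive} for $N=1$ at once.

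For the step $N\Rightarrow N+1$, let $I_N$ index the depth-$N$ vertices of $\T_q$. Every vertex of $\Gamma$ has at most one incoming arrow, so the depth-$(N+1)$ vertices of $\T_q$ partition uniquely by their parent at depth $N$. Because $A$ is odd, any two distinct cubes at a common level are disjoint up to measure zero, and hence
\eq{
\|f_{k_q+N+1}\|_{L_p(U_{N+1})}^p = \sum_{i\in I_N}\|f_{k_q+N+1}\|_{L_p(U_{N+1,i})}^p,
}
where $U_{N+1,i}$ denotes the union of the cubes of the kids of $(k_q+N,i)$ in $\T_q$ and $U_{N+1}=\bigcup_{i\in I_N}U_{N+1,i}$. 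A parent $(k_q+N,i)\in\T_q$ that has at least one kid is $2$-saturated by Remark~\ref{OnlySaturatedOutGoing}, and it is flat as a vertex of $\Gamma$; Corollary~\ref{LeleCor} therefore applies and gives
\eq{
\|f_{k_q+N+1}\|_{L_p(U_{N+1,i})} \leq A^{\alpha-\delta^{**}}\|f_{k_q+N}\|_{L_p(Q_{k_q+N,i})}.
}
Raising to the $p$-th power, summing over $i\in I_N$, and using disjointness of the level-$(k_q+N)$ cubes once more, I obtain
\eq{
\|f_{k_q+N+1}\|_{L_p(U_{N+1})}\leq A^{\alpha-\delta^{**}}\|f_{k_q+N}\|_{L_p(U_N)},
}
where $U_N=\bigcup_{i\in I_N}Q_{k_q+N,i}$ is precisely the set on the left of~\eqref{Inductive} at step $N$. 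Plugging in the inductive hypothesis then produces the required factor $A^{(\alpha-\delta^{**})(N+1)}$.

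There is really no serious obstacle: the submultiplicative gain $A^{\alpha-\delta^{**}}$ delivered by Corollary~\ref{LeleCor} is exactly what propagates geometrically across $N$ levels. The only bookkeeping point worth flagging is that a kid of $(k,j)$ in $\Gamma$ may geometrically sit inside a cube $Q_{k,i}$ with $i\neq j$ via a horizontal arrow $j\to i$ in $\tilde\Gamma_k$, rather than inside $Q_{k,j}$ itself; this is harmless, since the induction only uses disjointness of the kid cubes from one another, which is automatic from their common scale.
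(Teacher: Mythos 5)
Your proof is correct and is precisely the argument the paper intends (the paper declares this corollary an immediate consequence of Lemma~\ref{Lele}/Corollary~\ref{LeleCor} and omits the details): iterate the one-level bound of Corollary~\ref{LeleCor} down the tree, using that each vertex has a unique parent and that distinct cubes of a common generation are disjoint up to null sets, so the $p$-th powers add exactly across parents. The bookkeeping point you flag about kid cubes sitting in $Q_{k,i}$ with $i\neq j$ is handled correctly.
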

Let us investigate the roots of our trees. 
\begin{Le}\label{root}
Let~$(k_q,j_q)$ be the root of~$\T_q$ and let~$k_q \geq 1$. Then\textup, the atom~$(k_q-1,j')$ such that~$Q_{k_q,j_q}\subset Q_{(k_q-1,j')}$ is either~$\eps$-convex or is subordinate to an~$\eps$-convex atom in~$\tilde\Gamma_{k_q-1}$.
\end{Le}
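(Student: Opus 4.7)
The plan is a case analysis on the status (convex vs. flat, saturated vs. not) of the parent atom $(k_q-1,j')$, using the fact that $(k_q,j_q)$ being a root means it has no incoming arrow in $\Gamma$. First I would note that if $(k_q-1,j')$ is $\eps$-convex then there is nothing to prove, so we may assume it is $\eps$-flat, i.e.\ $(k_q-1,j')$ is itself a vertex of $\Gamma$.

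Next I would rule out the possibility that $(k_q-1,j')$ is $2$-saturated. Indeed, if it were, then since $Q_{k_q,j_q}\subset Q_{k_q-1,j'}$, the first clause of Definition~\ref{VerticalGraph} would force an arrow $(k_q-1,j')\to(k_q,j_q)$ in $\Gamma$, contradicting that $(k_q,j_q)$ is a root. Hence $(k_q-1,j')$ is $\eps$-flat but not $2$-saturated. Lemma~\ref{NoArrowsSaturated} then supplies an incoming arrow $i\to j'$ in $\tilde\Gamma_{k_q-1}$, and Corollary~\ref{SaturatedArrowCor} tells us that the source atom $(k_q-1,i)$ is $2$-saturated.

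It remains to show that $(k_q-1,i)$ is $\eps$-convex, which is exactly the conclusion ``$(k_q-1,j')$ is subordinate to a convex atom in $\tilde\Gamma_{k_q-1}$.'' Assume, for contradiction, that $(k_q-1,i)$ is $\eps$-flat. Then $(k_q-1,i)$ is a $2$-saturated vertex of $\Gamma$, we have $Q_{k_q,j_q}\subset Q_{k_q-1,j'}$, the atom $(k_q-1,j')$ is not $2$-saturated, and $i\to j'$ holds in $\tilde\Gamma_{k_q-1}$. The second clause of Definition~\ref{VerticalGraph} then produces an arrow $(k_q-1,i)\to(k_q,j_q)$ in $\Gamma$, again contradicting the fact that $(k_q,j_q)$ is a root of $\T_q$. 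Hence $(k_q-1,i)$ must be $\eps$-convex, completing the proof.

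The main obstacle is purely bookkeeping: one must keep straight the two-level definition of $\Gamma$, which allows vertical arrows that ``skip through'' a non-saturated horizontal neighbour, and verify that both clauses of that definition are obstructed precisely when $(k_q,j_q)$ is a root. There is no analytic content in this lemma beyond the combinatorial bookkeeping already built into Lemma~\ref{NoArrowsSaturated} and Corollary~\ref{SaturatedArrowCor}.
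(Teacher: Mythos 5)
Your proof is correct and follows essentially the same route as the paper's: assume the parent is flat, deduce it cannot be $2$-saturated (else the first clause of Definition~\ref{VerticalGraph} would give it an arrow into the root), invoke Lemma~\ref{NoArrowsSaturated} and Corollary~\ref{SaturatedArrowCor} to find a $2$-saturated dominating atom in $\tilde\Gamma_{k_q-1}$, and rule out its flatness via the second clause of Definition~\ref{VerticalGraph}. Your write-up is, if anything, slightly more explicit than the paper's in checking both clauses of the definition of $\Gamma$.
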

\begin{proof}
Assume the atom~$(k_q-1,j')$ is~$\eps$-flat, otherwise there is nothing to prove. To prove the lemma, we need to show~$(k_q-1,j')$ is subordinate to a convex atom in~$\tilde\Gamma_{k_q-1}$. Note that~$(k_q-1,j')$ is not~$2$-saturated, because there is no arrow~$(k_{q}-1,j')\to (k_q,j_q)$ in~$\Gamma$. Therefore, by Lemma~\ref{NoArrowsSaturated}, this atom is subordinated to another atom~$(k_q-1,\vec{j})$ in~$\tilde\Gamma_{k_q-1}$. Corollary~\ref{SaturatedArrowCor} says~$(k_q-1,\vec{j})$ is~$2$-saturated, and thus, it cannot be~$\eps$-flat since there is no arrow~$(k_q-1,\vec{j})\to (k_q,j_q)$ in~$\Gamma$. Therefore,~$(k_q-1,\vec{j})$ is convex, and the lemma is proved.
\end{proof}
We remind the reader that since we have already fixed~$A$, all the constants in our inequalities are now allowed to depend on~$A$. However, we still care about the uniformity with respect to~$N$.
\begin{St}\label{LebesgueInd}
Assume~$\delta_0\notin \MM^\WW$. Let~$\{\T_q\}_q$ be all the trees that start their development at the level~$K \geq 1$\textup, i.e.  such that~$k_q = K$. Then\textup, for any~$N \in \N$\textup,
\eq{\label{TreesSummation}
\sum\limits_q\|f_{K+N}\|_{L_p(\!\!\!\!\!\!\!\bigcup\limits_{(K+N,i) \in \T_q}\!\!\!\!\!\!\! Q_{K+N,i})} \lesssim_N A^{(\alpha - \delta^{**})N + \alpha K} \Big(\|f_{K+2}\|_{L_1} - \|f_{K-1}\|_{L_1}\Big),
}
where the constant in this inequality is independent of~$N$.
\end{St}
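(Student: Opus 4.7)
The plan is in two stages: a vertical iteration inside each tree down to its root, followed by a horizontal bundling of the roots at level $K-1$ that feeds into the telescoping sum of convex increments. The ingredients are essentially all in place; the burden is the bookkeeping.

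First I would apply \eqref{Inductive} to each tree $\T_q$ separately and sum in $q$ to reduce the claim to
\[
\sum_q \|f_K\|_{L_p(Q_{K,j_q})}\lesssim A^{\alpha K}\bigl(\|f_{K+2}\|_{L_1}-\|f_{K-1}\|_{L_1}\bigr),
\]
with a constant independent of $N$ and $K$ (but possibly depending on $A$ and $\eps$, both fixed by now). For each root $(K,j_q)$ let $(K-1,j'_q)$ be the unique atom at level $K-1$ with $Q_{K,j_q}\subset Q_{K-1,j'_q}$. By Lemma~\ref{root} there is an $\eps$-convex atom $(K-1,c_q)$ at the same level, with either $c_q=j'_q$ or $c_q\to j'_q$ in $\tilde\Gamma_{K-1}$.

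At most $A^d$ roots share any given parent $j'$, so H\"older on the partition $Q_{K-1,j'}=\bigsqcup_{j_q:\,j'_q=j'}Q_{K,j_q}$ gives
\[
\sum_{j_q:\,j'_q=j'}\|f_K\|_{L_p(Q_{K,j_q})}\le A^{\alpha}\|f_K\|_{L_p(Q_{K-1,j'})}.
\]
Applied at level $K-1$, the rescaled Corollary~\ref{EarlyCor} (when $j'=c_{j'}$) and Lemma~\ref{SubordinationL1Lp} (when $c_{j'}\to j'$) unify into
\[
\|f_K\|_{L_p(Q_{K-1,j'})}\lesssim_A A^{\alpha K}(1+|j'-c_{j'}|)^{-\theta_4}f^*_{K-1,c_{j'}}.
\]
The tail $\sum_{j'}(1+|j'-c|)^{-\theta_4}$ over parents grouped by their convex partner $c$ converges because $\theta_4>d$, leaving only a constant multiple of $f^*_{K-1,c}$ per convex $c$.

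Convexity of $(K-1,c)$ combined with Lemma~\ref{FirstMonotonicityLem} (used to move from $f_{K+1}$ under the weight at time $A^{-2(K-1)}-A^{-2(K+1)}$ to $f_{K+2}$ under the weight at time $A^{-2(K-1)}-A^{-2(K+2)}$) gives $f^*_{K-1,c}\le\eps^{-1}\Delta_{K-1,c}$, where $\Delta_{K-1,c}$ denotes the convex increment at $c$. Finally, the partition of unity $\sum_j w_{K-1,j}=1$ together with $\Heat[1]=1$ makes the nonnegative $\Delta_{K-1,j}$ telescope to $\|f_{K+2}\|_{L_1}-\|f_{K-1}\|_{L_1}$, completing the reduction.

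The hard part is the bookkeeping in the second stage: each convex atom at level $K-1$ must be charged only once despite possibly many flat parents being subordinate to it, and the level-$(K-1)$ rescalings of Corollary~\ref{EarlyCor} and Lemma~\ref{SubordinationL1Lp} must be derived by dilation from the level-$0$ statements so as to produce the factor $A^{\alpha K}$. The $\theta_4>d$ decay absorbs the subordinate parents, while the extra $A^\alpha$ from H\"older and $\eps^{-1}$ from convexity are harmless since $A$ and $\eps$ are already fixed.
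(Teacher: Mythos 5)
Your proposal is correct and follows essentially the same route as the paper: iterate \eqref{Inductive} down to the roots, pass to the level-$(K-1)$ parent, invoke Lemma~\ref{root} to locate a convex atom (the parent itself or its dominator in $\tilde\Gamma_{K-1}$), control $\|f_K\|_{L_p}$ by $A^{\alpha K}f^*_{K-1,c}$ via the rescaled $L_1\to L_p$ bounds, convert $f^*$ to the convex increment, and sum using $\theta_4>d$ and the partition of unity. The only cosmetic difference is that you bundle the at-most-$A^d$ roots per parent by H\"older (gaining $A^\alpha$) before estimating, whereas the paper estimates each root directly and absorbs the $A^d$ multiplicity at the end; both are the same counting.
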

\begin{proof}
We rely upon Lemma~\ref{root} and analyze the two cases arising in this lemma separately. Let~$\T_q$ be some tree with the root~$(K,j_q)$. Let~$Q_{K,j_q} \subset Q_{K-1,j'}$.

Consider the first case:~$(K-1,j')$ is convex. In this case,
\mlt{\label{FirstTree}
\|f_{K+N}\|_{L_p(\!\!\!\!\!\!\!\bigcup\limits_{(K+N,i) \in \T_q}\!\!\!\!\!\!\! Q_{K+N,i})}\Leqref{Inductive} A^{(\alpha - \delta^{**})N}\|f_{K}\|_{L_p(Q_{K,j_q})} \Lsref{\hbox{\tiny Cor.~\ref{L1LpFormulaRescaled}}}\\ A^{(\alpha - \delta^{**})N+ \alpha K}\|f_{K+2}\|_{L_1(\Heat[w_{K -1, j'}](\fdot,A^{-2K+2} - A^{-2K - 4}))} \Lsref{\scriptscriptstyle (K-1,j')\in \Conv}\\A^{(\alpha - \delta^{**})N +\alpha K} \Big(\|f_{K+2}\|_{L_1(\Heat[w_{K -1, j'}](\fdot,A^{-2K+2} - A^{-2K - 4}))} - \|f_{K-1}\|_{L_1(w_{K-1,j'})}\Big).
}

Consider the second case: let now~$(K-1,j')$ be subordinated to a convex atom~$(K-1,\vec{j})$ in~$\tilde\Gamma_{K-1}$. In this case,
\mlt{\label{SecondTree}
\|f_{K+N}\|_{L_p(\!\!\!\!\!\!\!\bigcup\limits_{(K+N,i) \in \T_q}\!\!\!\!\!\!\!Q_{K+N,i})}\Leqref{Inductive} A^{(\alpha - \delta^{**})N}\|f_{K}\|_{L_p(Q_{K,j_q})} \Lsref{\hbox{\tiny Cor.~\ref{L1LpFormulaRescaled}}}\\ A^{(\alpha - \delta^{**})N+\alpha K}\|f_{K+1}\|_{L_1(\Heat[w_{K -1, j'}](\fdot,A^{-2K+2} - A^{-2K - 2}))} =\\ A^{(\alpha - \delta^{**})N+\alpha K} f_{K-1,j'}^* \!\!\!\!\!\!\!\!\Lsref{\scriptscriptstyle (K-1,\vec{j})\stackrel{\scriptscriptstyle\tilde \Gamma_{K-1}}{\xrightarrow{\hspace*{0.5cm}}} (K-1,j')} A^{(\alpha - \delta^{**})N+\alpha K} (1+|j'-\vec{j}|)^{-\theta_4} f_{K-1,\vec{j}}^*=\\ A^{(\alpha - \delta^{**})N+\alpha K}(1+|j'-\vec{j}|)^{-\theta_4}\|f_{K+1}\|_{L_1(\Heat[w_{K -1, \vec{j}}](\fdot,A^{-2K+2} - A^{-2K - 2}))}\!\!\!\!\!\Lsref{\hbox{\tiny Lem.~\ref{FirstMonotonicityLem}}}\\
A^{(\alpha - \delta^{**})N+\alpha K}(1+|j'-\vec{j}|)^{-\theta_4}\|f_{K+2}\|_{L_1(\Heat[w_{K -1, \vec{j}}](\fdot,A^{-2K+2} - A^{-2K - 4}))}\!\!\!\!\! \Lsref{\scriptscriptstyle (K-1,\vec{j})\in \Conv}\\
A^{(\alpha - \delta^{**})N+\alpha K}(1+|j'-\vec{j}|)^{-\theta_4}\Big(\|f_{K+2}\|_{L_1(\Heat[w_{K -1, \vec{j}}](\fdot,A^{-2K+2} - A^{-2K - 4}))} - \|f_{K-1}\|_{L_1(w_{K-1,\vec{j}})}\Big).
}
We sum the estimates~\eqref{FirstTree} and~\eqref{SecondTree} over all trees~$\T_q$ that have roots on the level~$K$. On the left hand side, we obtain the quantity we want to estimate in~\eqref{TreesSummation}. The quantity on the right is bounded by
\eq{\label{Terrible}
A^{d+(\alpha - \delta^{**})N+\alpha K}\sum\limits_{\vec{j}\in\Z^d}\sum\limits_{j'\in\Z^d} (1+|\vec{j}-j'|)^{-\theta_4} \Big(\|f_{K+2}\|_{L_1(\Heat[w_{K -1, \vec{j}}](\fdot,A^{-2K+2} - A^{-2K - 4}))} - \|f_{K-1}\|_{L_1(w_{K-1,\vec{j}})}\Big)
}
since any cube~$Q_{K-1,j'}$ contains at most~$A^d$ cubes of the next generation. We recall that~$\theta_4 > d$, and thus, the sum with respect to~$j'$ is bounded by a constant. It remains to use that the weights~$w_{K-1,\vec{j}}$ form a partition of unity to bound~\eqref{Terrible} by the right hand side of~\eqref{TreesSummation}.
\end{proof}
\begin{Rem}\label{LebRem}
In the case~$K = 0$\textup, the inequality~\eqref{TreesSummation} is replaced with
\eq{
\sum\limits_q\|f_{N}\|_{L_p(\!\!\!\!\bigcup\limits_{(N,i) \in \T_q} \!\!\!\!Q_{N,i})} \lesssim_N A^{(\alpha - \delta^{**})N} \|f_{2}\|_{L_1}.
}
\end{Rem}
\begin{St}\label{PreFinal}
Assume~$\delta_0\notin \MM^\WW$. Let~$\{\T_q\}_q$ be all the trees that start their development at the level~$K \geq 1$\textup, i.e.  such that~$k_q = K$. Then\textup, for any~$N > 0$\textup,
\eq{
\|f_{K+N}\|_{L_{p,1}(\!\!\!\!\!\!\!\!\bigcup\limits_{(K+N,i) \in \cup_q\T_q}\!\!\!\!\!\!\!\! Q_{K+N,i})} \lesssim_N A^{(\alpha - \delta^{***})N + \alpha K} \Big(\|f_{K+2}\|_{L_1} - \|f_{K-1}\|_{L_1}\Big),
}
where~$\delta^{***} > 0$ is a fixed real and the constant in the inequality does not depend on~$N$.
In the case~$K=0$\textup, we have
\eq{
\|f_{N}\|_{L_{p,1}(\!\!\!\!\!\!\!\!\bigcup\limits_{(N,i) \in \cup_q\T_q}\!\!\!\!\!\!\!\! Q_{N,i})} \lesssim_N A^{(\alpha - \delta^{***})N} \|f_{2}\|_{L_1}.
}
\end{St}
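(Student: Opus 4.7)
The plan is to upgrade the $L_p$ bound of Proposition~\ref{LebesgueInd} to an $L_{p,1}$ bound by real interpolation in the summability exponent. I would pick two auxiliary exponents $p_0, p_1 \in (1,2]$ with $p_0 < p < p_1$, set $\alpha_j := d(p_j-1)/p_j$ for $j=0,1$, and choose $\theta \in (0,1)$ by $1/p = (1-\theta)/p_0 + \theta/p_1$, so that automatically $\alpha = (1-\theta)\alpha_0 + \theta \alpha_1$.

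First I would verify that all the constructions of Sections~\ref{S5}--\ref{S7} admit parameters that serve $p_0$, $p$, and $p_1$ simultaneously. The quantity $\tilde\delta$ in Corollary~\ref{RobustCorollary} and $\delta^*$ in Theorem~\ref{InductionStep} depend continuously on the summability exponent, and the thresholds for $A$ and $\eps$ in Theorems~\ref{Compactness} and~\ref{InductionStep} may be taken uniform over any compact subset of $(1,2]$. Hence one may fix a single large $A$ and a single small $\eps$ so that Theorems~\ref{Compactness} and~\ref{InductionStep}, Corollary~\ref{DirectInductionStep}, and Lemma~\ref{Lele} apply for each of $p_0,p,p_1$ with a common improvement rate $\delta^{**}>0$. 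The graph $\Gamma$ is built from the flat/convex classification (which depends on $\eps$ but not on the summability exponent), so the same forest serves for each $p_j$, and Proposition~\ref{LebesgueInd} with $p_j$ in place of $p$ gives
\eq{\label{ProposalLpj}
\sum_q\|f_{K+N}\|_{L_{p_j}(\Omega_q)} \lesssim A^{(\alpha_j - \delta^{**})N + \alpha_j K}\bigl(\|f_{K+2}\|_{L_1} - \|f_{K-1}\|_{L_1}\bigr),\quad j\in\{0,1\},
}
where $\Omega_q := \bigcup_{(K+N,i)\in\T_q} Q_{K+N,i}$.

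The Lorentz interpolation inequality $\|g\|_{L_{p,1}(\Omega)} \lesssim \|g\|_{L_{p_0}(\Omega)}^{1-\theta}\|g\|_{L_{p_1}(\Omega)}^{\theta}$ holds for any function $g$ and any measurable $\Omega$, with constant depending only on $p,p_0,p_1$; it follows from splitting the layer-cake integral~\eqref{LorentzDef} at the threshold $t^* := (\|g\|_{L_{p_1}(\Omega)}^{p_1}/\|g\|_{L_{p_0}(\Omega)}^{p_0})^{1/(p_1-p_0)}$ and applying Chebyshev to the two pieces. Applying this to $g := f_{K+N}$ on each $\Omega_q$, summing over $q$ via H\"older's inequality for sequences, substituting~\eqref{ProposalLpj} for $j=0,1$, and invoking $(1-\theta)\alpha_0 + \theta\alpha_1 = \alpha$, I obtain
\eq{
\sum_q\|f_{K+N}\|_{L_{p,1}(\Omega_q)} \lesssim A^{(\alpha - \delta^{***})N + \alpha K}\bigl(\|f_{K+2}\|_{L_1} - \|f_{K-1}\|_{L_1}\bigr),
}
with $\delta^{***} := \delta^{**} > 0$. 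Since distinct trees occupy disjoint cubes at level $K+N$, Lemma~\ref{Split} converts this into the claimed bound for $\|f_{K+N}\|_{L_{p,1}(\bigcup_q\Omega_q)}$. The case $K=0$ is handled identically, with Remark~\ref{LebRem} in the role of Proposition~\ref{LebesgueInd}.

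The main obstacle I anticipate is the uniformity verification in the second paragraph. The constant $\delta$ in Theorem~\ref{Robust} is produced by a compactness/contradiction argument in which the dependence on $p$ enters only implicitly through Proposition~\ref{RobustProp}; one has to trace this dependence through Corollary~\ref{RobustCorollary} and the compactness argument of Section~\ref{S5} to conclude that $\delta^{**}$ may indeed be chosen positive and uniform in $p$ over a small neighborhood of the fixed exponent. The continuous dependence of all ingredients on $p$ makes the conclusion plausible, but this is the delicate point of the proposal.
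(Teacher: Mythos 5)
Your proposal is correct and follows essentially the same route as the paper: both upgrade Proposition~\ref{LebesgueInd} to the Lorentz scale via the elementary interpolation bound $\|g\|_{L_{p,1}}\lesssim\|g\|_{L_{p_0}}^{1-\theta}\|g\|_{L_{p_1}}^{\theta}$ for two nearby exponents, exploiting that the atoms and graphs are defined purely through $L_1$ quantities and hence are independent of the summability exponent (the paper takes the symmetric choice $\tfrac1{p_1}+\tfrac1{p_2}=\tfrac2p$ with $\theta=\tfrac12$). The uniformity issue you flag as delicate is dispatched in the paper without any continuity in $p$: since only the three fixed exponents $p_0,p,p_1$ ever enter, one simply chooses $A$ large enough and $\eps$ small enough to serve all three at once, taking $\delta^{***}$ as the (weighted) mean of the corresponding improvement rates.
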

\begin{proof}
First, Lemma~\ref{Split} allows to derive similar estimates where the Lorentz norm is replaced with the~$L_p$ norm from Proposition~\ref{LebesgueInd} and Remark~\ref{LebRem} (with~$\delta^{***} = \delta^{**}$). Second, we note that all our combinatorial considerations (definitions of atoms and constructions of graphs) do not depend on~$p$ if we assume~$A$ to be sufficiently large. Therefore, the trivial interpolatory inequality
\eq{
\|g\|_{L_{p,1}}\lesssim \|g\|_{L_{p_1}}^\frac12 \|g\|_{L_{p_2}}^\frac12,
}
where~$p_1$ and~$p_2$ are small perturbations of~$p$ satisfying~$\frac{1}{p_1} + \frac{1}{p_2} = \frac{2}{p}$, allows to deduce the desired Lorentz bounds from the already obtained bounds on the~$L_{p_1}$ and~$L_{p_2}$ norms (we obtain~$\delta^{***}$ is the arithmetical mean of~$\delta^{**}$ for~$p_1$ and~$\delta^{**}$ for $p_2$).
\end{proof}
The next corollary follows immediately from Proposition~\ref{PreFinal}: one needs to compute the sum of a geometric series.
\begin{Cor}\label{Final}
Assume~$\delta_0\notin \MM^\WW$. Let~$\{\T_q\}_q$ be all the trees that start their development at the level~$K$. Then\textup, 
\eq{
\sum\limits_{N \geq 0} A^{-\alpha (K+N)}\|f_{K+N}\|_{L_{p,1}(\!\!\!\!\!\!\bigcup\limits_{(K+N,i) \in \cup_q\T_q}\!\!\!\!\!\! Q_{K+N,i})} \lesssim \Big(\|f_{K+2}\|_{L_1} - \|f_{K-1}\|_{L_1}\Big), \quad K \geq 1,
}
and
\eq{
\sum\limits_{N \geq 0} A^{-\alpha N}\|f_{N}\|_{L_{p,1}(\!\!\!\!\bigcup\limits_{(N,i) \in \cup_q\T_q}\!\!\!\! Q_{N,i})} \lesssim \|f_{2}\|_{L_1}.
}
\begin{proof}[Proof of Theorem~\ref{Main}.]
By Remark~\ref{plessthantwo}, it suffices to consider the case~$p \leq 2$. By Remark~\ref{ALesOne}, it suffices to prove inequality~\eqref{Besov2}. We choose the parameters
\alg{
\theta_1 &= 2d+4;\\
\theta_2 &= 2d+3;\\
\theta_3 &= 4d+9;\\
\theta_4 &= d+2;\\
\theta_5 &= d+1
}
and see that they fulfill all our previous requirements. This allows us to choose~$\lambda$ (see~\eqref{Lambda}) and~$A$ (this parameter is chosen to be sufficiently large in order~\eqref{Submult} to be true for~$p, p_1,$ and~$p_2$ in the proof of Proposition~\ref{PreFinal}). We also choose~$\eps$ as prescribed by Theorems~\ref{Compactness} (with~$C$ coming from Lemma~\ref{SaturatedConcentrated}) and~\ref{InductionStep}. This provides us with the sets~$\Conv$ and~$\Fl$ and the graphs~$\{\tilde\Gamma_k\}_k$ and~$\Gamma$. By Lemma~\ref{Split}, it suffices to prove the estimates
\alg{
\sum\limits_{k\geq 0}A^{-\alpha k}\|f_k\|_{L_{p,1}(\!\!\!\bigcup\limits_{(k,j)\in \Conv}\!\!\! Q_{k,j})}\lesssim \|f\|_{L_1};\\
\sum\limits_{k\geq 0}A^{-\alpha k}\|f_k\|_{L_{p,1}(\!\!\!\bigcup\limits_{(k,j)\in \Fl}\!\!\! Q_{k,j})}\lesssim \|f\|_{L_1}.
}
The first inequality is established in Proposition~\ref{ConvexControl}. The second one follows from Corollary~\ref{Final} and formula~\eqref{telescope} since any flat atom is a vertex in~$\Gamma$:
\mlt{
\sum\limits_{k\geq 0}A^{-\alpha k}\|f_k\|_{L_{p,1}(\!\!\!\bigcup\limits_{(k,j)\in \Fl}\!\!\! Q_{k,j})} \Lref{\hbox{\tiny Lem.~\ref{Split}}} \sum\limits_{k\geq 0} A^{-\alpha k}\sum\limits_{K \leq k} \|f_{k}\|_{L_{p,1}(\!\!\!\!\!\bigcup\limits_{(k,j)\in\!\!\!\! \bigcup\limits_{k_q = K}\!\!\!\! \T_q}\!\!\!\!\! Q_{k,j})} =\\ \sum\limits_{K \geq 0} \sum\limits_{N \geq 0} A^{-\alpha (K+N)}\|f_{K+N}\|_{L_{p,1}(\!\!\!\!\!\!\!\!\!\!\bigcup\limits_{(K+N,j)\in\!\!\!\! \bigcup\limits_{k_q = K}\!\!\!\! \T_q}\!\!\!\!\! Q_{K+N,j})} \lesssim \|f_{2}\|_{L_1} + \sum\limits_{K \geq 1}\Big(\|f_{K+2}\|_{L_1} - \|f_{K-1}\|_{L_1}\Big) \lesssim \|f\|_{L_1}.
} 
\end{proof}

\end{Cor}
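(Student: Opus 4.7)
The plan is very short, as the remark preceding the corollary suggests: everything reduces to applying Proposition~\ref{PreFinal} term-by-term and summing a geometric series.

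First I would, for each fixed $N \geq 0$, apply the first estimate of Proposition~\ref{PreFinal} to the $N$-th summand on the left-hand side. This yields
\eq{
A^{-\alpha(K+N)}\|f_{K+N}\|_{L_{p,1}(\!\!\!\!\!\!\bigcup\limits_{(K+N,i)\in\cup_q\T_q}\!\!\!\!\!\!Q_{K+N,i})}
\lesssim A^{-\alpha(K+N)}\cdot A^{(\alpha-\delta^{***})N+\alpha K}\Big(\|f_{K+2}\|_{L_1}-\|f_{K-1}\|_{L_1}\Big),
}
and the powers of $A$ collapse to $A^{-\delta^{***}N}$. Crucially, the implicit constant here is independent of $N$ (this is precisely the uniformity asserted in Proposition~\ref{PreFinal}); it may depend on $A$, but $A$ has already been fixed, so this is harmless.

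Next I would sum the resulting bounds over $N \geq 0$. Since $\delta^{***} > 0$ and $A > 1$, the series $\sum_{N\geq 0}A^{-\delta^{***} N}$ is a convergent geometric series with sum $(1-A^{-\delta^{***}})^{-1}$, which is a constant (again depending on $A$ and $\delta^{***}$, both of which are fixed). Factoring the $K$-dependent difference $\|f_{K+2}\|_{L_1}-\|f_{K-1}\|_{L_1}$ outside the sum in $N$ then produces exactly the claimed inequality for the case $K\geq 1$.

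For the case $K=0$, the argument is identical but uses the second estimate of Proposition~\ref{PreFinal}: one obtains $A^{-\alpha N}\|f_N\|_{L_{p,1}(\ldots)}\lesssim A^{-\delta^{***}N}\|f_2\|_{L_1}$ and sums the geometric series. There is no real obstacle here; the only thing to keep track of is that all constants suppressed by $\lesssim$ are genuinely independent of $N$ so that the geometric summation is legitimate, and that we are allowed to let them depend on $A$ since $A$ was fixed at the beginning of this section.
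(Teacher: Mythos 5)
Your proposal is correct and is exactly the paper's argument: the paper dispatches this corollary in one line ("one needs to compute the sum of a geometric series"), and your term-by-term application of Proposition~\ref{PreFinal}, the cancellation of the powers of~$A$ down to~$A^{-\delta^{***}N}$, and the summation of the resulting geometric series (using that the implicit constants are uniform in~$N$ and that~$A$ is already fixed) fill in precisely the intended details.
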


\bibliography{mybib}{}
\bibliographystyle{amsplain}

St. Petersburg State University, Department of Mathematics and Computer Science;

St. Petersburg Department of Steklov Mathematical Institute;

d.m.stolyarov at spbu dot ru.
\end{document}